\setlist[enumerate]{leftmargin=.5in}
\setlist[itemize]{leftmargin=.5in}
\crefname{hypothesis}{Hypothesis}{Hypotheses}
\author{Dianne Doe\thanks{Imagination Corp., Chicago, IL
  (\email{ddoe@imag.com}, \url{http://www.imag.com/\string~ddoe/}).}
\and Paul T. Frank\thanks{Department of Applied Mathematics, Fictional University, Boise, ID
  (\email{ptfrank@fictional.edu}, \email{jesmith@fictional.edu}).}
\and Jane E. Smith\footnotemark[3]}
\newcommand{\Rb}{\mathbb{R}}
\newcommand{\Sb}{\mathbb{S}}
\newcommand{\Dc}{\mathcal{D}}
\newcommand{\Ec}{\mathcal{E}}
\newcommand{\Fc}{\mathcal{F}}
\newcommand{\Lc}{\mathcal{L}}
\newcommand{\Oc}{\mathcal{O}}
\newcommand{\wk}{{W^k}}
\newcommand{\im}{\mathrm{Im}}
\newcommand{\rd}{\mathrm{d}}
\newcommand{\rH}{\mathrm{H}}
\newcommand{\rL}{\mathrm{L}}
\newcommand{\eps}{\varepsilon}
\newcommand{\supp}{\text{supp}}
\newcommand{\algrule}[1][.2pt]{\hspace*{-.6in}\hrulefill}
\newcommand{\Wk}{W^k}
\newcommand{\uk}{u^k}
\newcommand{\fk}{f^k}
\newcommand{\Gk}{G^k}
\newcommand{\Hk}{H^k}
\newcommand{\uik}{u^{\mathrm{i},k}}
\newcommand{\usk}{u^{\mathrm{s},k}}
\newcommand{\rmd}{\mathrm{d}}
\newcommand{\rmi}{\mathrm{i}}
\newcommand{\rmo}{\mathrm{o}}
\newcommand{\rmr}{\mathrm{r}}
\newcommand{\rms}{\mathrm{s}}
\newcommand{\ri}{{\mathrm{i}}}
\begin{document}

\title{High-frequency limit of the inverse scattering problem: asymptotic convergence from inverse Helmholtz to inverse Liouville\thanks{Submitted to the editors DATE.
\funding{The work of Q.~L. is supported in part by the UW-Madison
Data Initiative, Vilas Young Investigation Award, National Science Foundation under the grant DMS-1750488 and the Office of Naval Research under the grant ONR-N00014-21-1-2140. The work of L.~Z.-N. is supported in part by the National Science Foundation under the grant DMS-2012292. In addition, Q.~L. and L.~Z.-N. are supported by the NSF TRIPODS award 1740707. The views expressed in the article do not necessarily represent the views of the any funding agencies. The authors are grateful for the support.}}}

\author{Shi Chen\thanks{Department of Mathematics, University of Wisconsin-Madison, Madison WI 53706
  (\email{schen636@wisc.edu}, \url{https://simonchenthu.github.io/}.}
  \and Zhiyan Ding\thanks{Department of Mathematics, University of California-Berkeley, Berkeley CA 94720
  (\email{zding.m@math.berkeley.edu}, \url{https://math.berkeley.edu/\string~zding.m/}).}
  \and Qin Li\thanks{Department of Mathematics, University of Wisconsin-Madison, Madison WI 53706
  (\email{qinli@math.wisc.edu}, \url{https://people.math.wisc.edu/\string~qinli/}).}
  \and Leonardo Zepeda-N\'u\~nez\thanks{Department of Mathematics, University of Wisconsin-Madison, Madison WI 53706
  (\email{zepedanunez@wisc.edu}, \url{https://people.math.wisc.edu/\string~lzepeda/}). Now at Google Research.}}
\date{\today}
\maketitle
\begin{abstract}
We investigate the asymptotic relation between the inverse problems relying on the Helmholtz equation and the radiative transfer equation (RTE) as physical models, in the high-frequency limit. In particular, we evaluate the asymptotic convergence of a generalized version of inverse scattering problem based on the Helmholtz equation, to the inverse scattering problem of the Liouville equation (a simplified version of RTE). The two inverse problems are connected through the Wigner transform that translates the wave-type description on the physical space to the kinetic-type description on the phase space, and the Husimi transform that models data localized both in location and direction. The finding suggests that impinging tightly concentrated monochromatic beams can indeed provide stable reconstruction of the medium, asymptotically in the high-frequency regime. This fact stands in contrast with the unstable reconstruction for the classical inverse scattering problem when the probing signals are plane-waves.

\end{abstract}

\begin{keywords}
  Inverse Scattering, Wigner Transform, Husimi Transform, High-frequency Limit
\end{keywords}

\begin{AMS}
  65N21, 78A46, 81S30
\end{AMS}

\section{Introduction}

The wave-particle duality of light has been one of the greatest enigmas in the natural sciences, dating back to Euclid's treatise in light, {\it Catoptrics} ($280$ B.C.) and spanning more than two millennia. In a nutshell, light can be either described as an electromagnetic (EM) wave governed by the Maxwell's equations, or as a stream of particles, called photons, governed by the radiative transport equation (RTE).

Although the advent of quantum mechanics at the onset of the last century partially solved the riddle, due to computational considerations, light continues to be modeled either as a particle or as a wave depending on the target application. Among those applications, inverse problems are perhaps the ones that have gained the most attention in the last decades, which in return have fueled many breakthroughs in telecommunications~\cite{Wh:2001electromagnetic,Wh:2003electromagnetic}, radar~\cite{Ch:2001mathematical}, biomedical imaging~\cite{Sc:1978improved,BaUh:2010inverse} and, more recently, in chip manufacturing~\cite{King:1981principles}. In this context, inverse problems can be roughly described as reconstructing unknown parameters within a domain of interest by data comprised of observations on its boundary.

Unfortunately, the properties of the inverse problems are highly dependent on the specific modeling of the underlying physical phenomena, even though, in principle, they share the same microscopic description. In particular, the stability of the inverse problem, i.e., how sensitive is the reconstruction of the unknown parameter to perturbations in the data, is surprisingly disparate~\cite{NaUhWa:2013increasing,ChLi:2021semiclassical}, thus creating an important gap between the wave and particle descriptions, which we seek to bridge in this paper. We point out that understanding this gap is not only of theoretical importance, it would also play an important role in designing new reconstruction algorithms with improved stability applicable to a broader set of wave-based inverse problems, which are ubiquitous in science~\cite{Ta:1984inversion,RaPoFi:2010seismic,Ol:1906construction} and engineering~\cite{PeWaLo:2015improved,AtAp:1997inverse,CoPrFrSeVeCoBaDeCa:2015use}.

For simplicity, we consider a time-harmonic wave-like description governed by the Helmholtz equation, which can be derived from the time-harmonic Maxwell's equations after some simplifications. Alternatively, the Helmholtz equation can also be obtained by computing the Fourier transform of the constant-density acoustic wave equation at frequency $k$, and is given by\footnote{The domain of definition, source, and boundary conditions will be specified in Section~\ref{sec:setups}.}
\begin{equation}\label{eqn:helmholtz_no_bdy}
\left(\Delta  +k^2n \right) u(x) = 0\,,
\end{equation}
where $u$ is the wave field, and $n(x)$ is the refractive index of the medium. We point out that even if this is a simplified model, it retains the core difficulty of more complex physics.

We also consider a particle-like description governed by the Liouville equation, which is a simplified RTE, given by:
\begin{equation}\label{eqn:RTE}
v\cdot\nabla_xf - \nabla_xn \cdot\nabla_vf = 0\,,
\end{equation}
where $f(x,v)$ is the distribution of photon particles, and $n$ is still the refractive index. The Liouville equation describes the trajectories of photons via its characteristics: $\dot{x}=v$ and $\dot{v} = -\nabla_xn$. For simplicity we neglect the photon interactions which are usually encoded by the collision operator.

Following the wave and photon descriptions, we define the forward problem as calculating either the wave-field, or the photon distribution from the refractive index by solving either the Helmholtz or the Liouville equations. The wave-particle duality, when translated to mathematical language, corresponds to the fact that the solutions obtained by the Helmholtz and Liouville equations are asymptotically close when $k \rightarrow \infty$, see~\cite{BaPaRy:2002radiative}.

For the sake of conciseness, we consider a simplified inverse problem consisting of reconstructing an unknown environment within a domain of interest by probing it with tightly concentrated monochromatic beams originated from the the boundary of the domain, in which the response of the unknown medium to the impinging beam is measured at its boundary. This measurement is performed by a measurement operator that is model-specific and it will play an important role in what follows. For simplicity, we consider the full aperture regime, i.e., we can probe the medium from any direction, and we sample its impulse response in all possible directions. When the beam is modeled as a wave, i.e., using the Helmholtz equation as a forward model, this process can be considered as a {\it generalized} version of the inverse scattering wave problem (which we, for the sake of clarity, just refer to as the {\it generalized Helmholtz scattering problem}. When the beam is modeled as a flux of photons, i.e., using the Liouville equation as a forward model, this process is often referred to as the {\it optical tomography problem}, but we will refer to as the  {\it Liouville scattering problem} in this manuscript.

Although the two different formulations seek to solve the same underlying physical problem, our understanding of the two inverse problems seems to suggest different stability properties. The traditional inverse scattering problem, using either near-field or far-field data is ill-posed: small perturbations in the measurements usually lead to large deviations in the reconstructions~\cite{CoKr:2019inverse,HaHo:2001new}. Thus, sophisticated algorithms~\cite{LiDe:2016full,DeKr:2013new,DeDa:2017numerical,Pr:1999seismic,Ch:1997inverse,BoGiGr:2017high,BaLiLiTr:2015inverse} have been designed to artificially stabilize the process by appropriately restricting the class of possible unknown environments, usually in the form of band-limited environments. Conversely, the inverse Liouville equation is well-conditioned: a small perturbation is reflected by a small error in the reconstruction~\cite{No:1999small}.

Thus the observation that the stability for both problems is different seems to be at odds with the fact that the Liouville equation and the Helmholtz equation are asymptotically close in the high-frequency regime. Fortunately, as what we will see, this somewhat contradictory property stems from the inability of {\it traditional} formulations of the inverse problems to agree in the high-frequency limit. When the measurement operators are accordingly adjusted, we show that the new formulations, which we call the {\it generalized} inverse scattering, are equivalent in the limit as $k \rightarrow \infty$, producing a stable inverse problem. The convergence from the Helmholtz equation to the Liouville equation is conducted through the Wigner transform~\cite{GeMaMaPo:1997homogenization,RyPaKe:1996transport,BaPaRy:2002radiative}, and the convergence of the measuring operators is achieved through the Husimi transform~\cite{BeCaKaPe:2002high}. Both convergences are obtained asymptotically in the $k\to\infty$ limit. This convergence allows us to conclude the following:

\medskip
\emph{The inverse Liouville scattering problem is asymptotically equivalent to the generalized inverse Helmholtz scattering problem in the high-frequency regime.}

\medskip

The current manuscript is dedicated to formulating the statement above in a mathematically precise manner, while providing extensive numerical evidence supporting the statement.

On the mathematical level, the current paper carries the following important features:
\begin{itemize}
    \item The result connects the two seemingly distinct inverse problems, and suggests that in the high-frequency regime, probing an unknown object with a single frequency is already enough for its reconstruction, with properly prepared data in the generalized inverse scattering setting. This partially answers the stability question regarding the inverse scattering.

    \item The result can be viewed as the counterpart of the asymptotic multiscale study conducted in the forward setting. In particular, semi-classical limit is a theory that connects quantum mechanical and the classical mechanical description: the proposed formulation for the inverse scattering problem can be regarded as taking the (semi-)classical limit in the inverse setting, and thus the work carries conceptual merits. This is in line with ~\cite{NaUhWa:2013increasing,ChLi:2021semiclassical}. See also~\cite{LaLiUh:2019inverse} for a different setting.
\end{itemize}

These mathematical understandings also naturally bring numerical and practical benefits. The new inverse wave scattering formulation coupled with PDE-constrained optimization seems to be empirically less prone to cycle-skipping, i.e., convergence to spurious local minima~\cite{ViOp:2009overview}, than its standard counterparts~\cite{ViAsBrMeRiZh:2017introduction,BoGiGr:2017high}, thus potentially opening the way to more robust algorithmic pipelines for inverse problems.

We point out that even though this current study is motivated by the wave-particle duality of light, the current results are also applicable to other oscillatory phenomena, see~\cite{ChLi:2021semiclassical} for a discussion on inverse Schr\"odinger problem in the classical limit.

\subsection*{Organization}
In Section~\ref{sec:setups}, we briefly review the Helmholtz equation and present the corresponding inverse problem that fits the particular experimental setup that allows passing the system to the $k\to\infty$ limit. In Section~\ref{sec:limit}, we discuss the limiting Liouville equation and the inverse Liouville scattering problem, by conducting the Wigner and Husimi transforms. The connections between the two inverse problems will thus be immediate. Finally, we present our numerical evidences that justify the convergence in Section~\ref{sec:numer} and we showcase the stability of the inverse problem in Section~\ref{sec:numer_inverse}.

\section{Experimental setup and inverse problem formulation}\label{sec:setups}
Suppose we use tightly concentrated monochromatic beams, or laser beams, to probe the medium. Each beam impinges in the area of interest, thus producing a scattered field which is then measured by directional receivers\footnote{Experimentally, this is often achieved by placing a collimator before the receiver, and changing the orientation of the collimator.} placed on a manifold around the domain of interest. The data, which is used to reconstruct the optical properties of the medium, is the intensity captured by each receiver for each incoming beam. Thus, the data is indexed by the position and direction of the impinging beam, and the location and direction of the receivers.

\begin{figure}[htbp]
  \centering
  \includegraphics[width=0.3\textwidth]{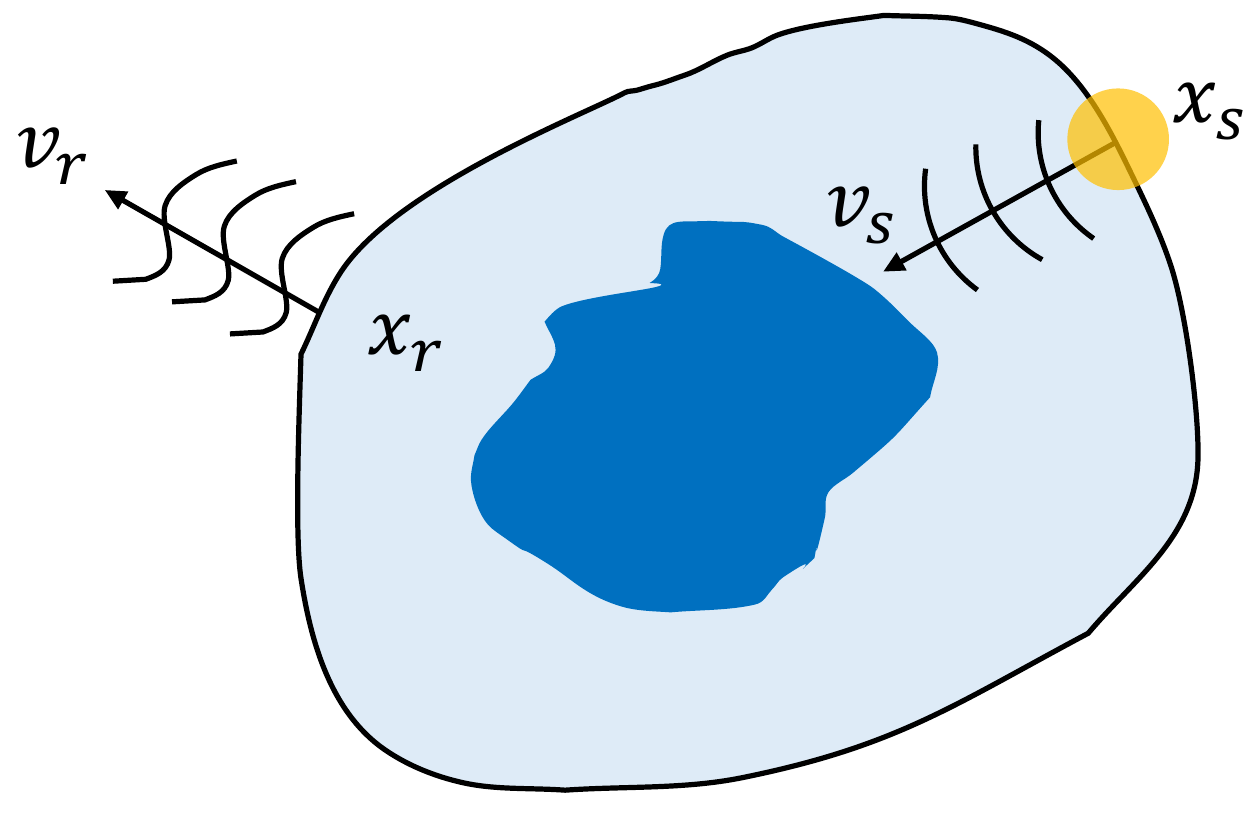}
  \caption{Illustration of the setup.}
  \label{fig:setup_new}
\end{figure}

In this section, we set up the experiment and provide the mathematical formulation, using both the wave and the particle forms for the forward model. This prepares us to link the two problems in Section~\ref{sec:limit}.
\subsection{Helmholtz equation and inverse wave scattering problem}\label{sec:inverse_wave}

The Helmholtz equation is a model equation for time-harmonic wave propagation. After some approximations, both the constant-density acoustic wave equation and the Maxwell equations for the EM waves, can be recast, through the Fourier transform in $t$, into the Helmholtz equation. It writes as:
\begin{equation}\label{eqn:Helmholtz_direct}
\Delta \uk + k^2 n(x) \uk = S^k(x)\,.
\end{equation}
In the equation, $\uk$ is the wave-field, with the superscript, $k>0$ represents the wave number (that carries the frequency information, and thus in the paper we use the two words interchangeably). $n(x)$ is a complex-valued refractive index having non-negative imaginary part, $\im(n(x))\geq 0$, reflecting the heterogeneity of the medium. We assume $n(x)$ is the constant one in all $\mathbb{R}^d$ except in a convex bounded open set $\Omega\subset\Rb^d$, meaning $\supp (n-1) \subset \Omega$. In order to streamline the notation, we let $\Omega = B_1$, the ball with radius $1$ centered around the origin. The right-hand side $S^k(x)$ is the source term, which is wave-number dependent.

The classical setup for the scattering problem is to probe the medium with an incident wave-field $u^{i,k}$ that triggers the response from the medium. Noting that the total field, which  satisfies~\eqref{eqn:Helmholtz_direct}, is the sum of the incident and the scattered wave-fields, we can write:
\[
\uk = u^{i,k} + u^{s,k}\,,
\]
and derive the equation for the scattered wave-field $u^{s,k}$. Suppose the incident wave is designed so that it absorbs all the external source information:
\begin{equation}\label{eqn:Helmholtz_incident}
\Delta \uik + k^2 \uik = S^k(x)\,,
\end{equation}
then by simply subtracting it from~\eqref{eqn:Helmholtz_direct}, we have the equation for $u^{s,k}$:
\begin{equation}\label{eqn:scattering}
\begin{aligned}
\Delta \usk + k^2 n(x) \usk &= k^2(1-n(x)) \uik \quad x \in \Rb^d \,, \\
\frac{\partial \usk}{\partial r} - \ri k \usk &= \Oc(r^{-(d+1)/2}) \text{ as } r=|x|\to\infty \,.
\end{aligned}
\end{equation}
In this equation, we can view the incident wave $\uik$ impinging in the perturbation $n-1$ as the source term for $\usk$. Clearly, this source term $k^2(1-n(x)) \uik$ is zero outside $B_1$, the support of $n-1$. The Sommerfeld radiation condition is imposed at infinity to ensure the uniqueness for $\usk$.

When $d=3$, a typical approach is to set $S(x) = \delta_y$, a point Dirac delta, then the solution $\uik$ to~\eqref{eqn:Helmholtz_incident} becomes the fundamental solution to the homogeneous Helmholtz equation in $\mathbb{R}^3$
\[
\Phi(x;y)=-\frac{1}{4\pi}\frac{\exp(ik|x-y|)}{|x-y|},\quad x,y\in\mathbb{R}^3, x\neq y\,,
\]
for any given $y$. We can clearly observe that the function is radially symmetric centered in $y$ thus it is often termed a spherical wave. If $|y| \gg |x|$, i.e., $y$ is far away from the origin, we have the far-field regime, in which the fundamental solution is approximately a plain wave: $\Phi(x;y)\approx -\frac{e^{ik|y|}}{4\pi|y|}\exp(-ik\hat{y}\cdot x)$.

In this case, however, instead of using the Dirac delta, we handcraft a specially designed source term, which will be crucial for the re-scaling proposed in this article. In particular, we choose $S^k_\rH(x)$ to be the following:
\begin{equation}\label{eqn:source}
S^k_\rH(x;x_s,{v}_s) = -k^{\frac{3+d}{2}} S_{{v}_s}(k(x-x_s)) \quad x \in \Rb^d \,,
\end{equation}
where the subscript $\rH$ stands for Helmholtz, and
\begin{equation}\label{eqn:S}
S_{{v}_s}(x) = C(\sigma,d)\exp\left(-\sigma^2\frac{|x|^2}{2}
+ \ri v_s\cdot x\right)\,.
\end{equation}
Here $C(\sigma,d)$ is the normalization constant $C(\sigma,d) = \sqrt{2} \left(\frac{\sigma}{\sqrt{\pi}}\right)^{\frac{d+1}{2}}$.

Physically this source term can be understood as the source generating a tight beam being shone onto the medium from the location $x_s$ in the direction of ${v}_s$. The profile of this tight beam, or ``laser beam'', is a Gaussian centered around the light-up location $x_s$ and the width of the Gaussian is characterized by $(k\sigma)^{-1}$. With $\sigma$ fixed, as $k\to\infty$, the beam is more and more concentrated.

Following the explanation above we incorporate the source term in~\eqref{eqn:source} into~\eqref{eqn:Helmholtz_direct}-\eqref{eqn:Helmholtz_incident}, to probe the medium from the positions, $x_s$, in the direction of $v_s$, that are physically pertinent. In particular, we let $(x_s,v_s)\in\Gamma_-$ where
\[
\Gamma_\pm = \{ (x,v)\in\partial B_1\times\Sb^{d-1}: \pm v \cdot \nu(x) > 0 \}\,.
\]
In which, $\nu(x)$ denotes the outer-normal direction at $x\in\partial B_1$. This means the laser beams shine from the boundary of $B_1$ in the direction $v$ pointing inward the interior of the domain.

From~\eqref{eqn:source} we can observe that as $k\to\infty$, the laser beam becomes increasingly concentrated. In particular, in the $k\to\infty$ limit, the incident wave $\uik$ becomes a ray, propagating through a straight line\footnote{The incoming ray propagates in a straight line due to the assumption that the background is constant. Otherwise, the ray would bend if a smooth non-constant background is considered.}.

As usual in inverse problems (in particular, in non-intrusive experimental setups), we take measurements of $\uk$ near the boundary $\partial B_1$. To take such measurement we design a family of test functions of the form:
\begin{equation}\label{eqn:phi_form}
\phi_v^k(x) = k^{d/4}\chi\left( \sqrt{k} x \right) e^{-\ri k v\cdot x} \,,
\end{equation}
where $\chi:\Rb^d\to\Rb$ is a smooth radially symmetric function that vanishes as $|x|\to\infty$.

We define the measurement of $\uk$ as its Husimi transform
\begin{equation}\label{eqn:Husimi}
\Hk \uk(x,v) = \left( \frac{k}{2\pi}\right)^d \left|\uk\ast\phi_{v}^k\right|^2\,\qquad \text{for } (x, v) \in \Gamma_{+}.
\end{equation}
The measurement then consists of the intensity of the field that convolves with the test function. This measurement is conducted only on the boundary, and only in the directions pointing outside the domain.

This measurement has a clear physical interpretation: it measures the intensity of the wave-field at location $x$ propagating in direction $v$, using $\chi$ as the impulse response of the receiver, or test function.

One typical choice for the family of test functions is to set $\chi$ as a  Gaussian (normalized in $L^2$ norm)
\begin{equation}\label{eqn:measure_profile}
\chi(x) = \left(\frac{1}{\pi}\right)^{d/4} \exp\left( -\frac{|x|^2}{2}\right)\,.
\end{equation}
It is straightforward to see that as $k\to\infty$, the test function $\phi^k_v$ concentrates around zero due to the $\sqrt{k}$ scaling. As such, the measurement $\uk\ast\phi^k_v$ at a location $x_s$ only takes value of $\uk$ in a very small neighborhood around $x_s$.

\begin{remark}
We note that the choice of $\chi$ in~\eqref{eqn:measure_profile} is not essential. We use this specific form to make the calculation explicit, as it will be shown in~\cref{prop:limit}. Other forms of $\chi$ would also work well as long as the corresponding $G^k=W^k[\phi_0^k]$ converges to a Dirac delta when $k\to\infty$, as it will be explained in~\cref{rmk:chi_convergence}.
\end{remark}

\medskip

\noindent \textbf{Forward Map:} now we have all the elements to define the forward map. For any $(x_s,v_s)\in\Gamma_-$, we shine laser beam into ${B_1}$ according to the format in~\eqref{eqn:source}, then the solution to the Helmholtz equation~\eqref{eqn:Helmholtz_direct}, $\uk$ is tested by $\phi_v^k(x)$ and evaluated on $\Gamma_+$:
\begin{equation}\label{eqn:forwardmap}
\Lambda^k_n:\quad S^k_\rH(x;x_s,v_s) \to H^k \uk(x_r,v_r)|_{\Gamma_+}\,.
\end{equation}
As a consequence, the dataset generated by this forward map is the collection of:
\begin{equation}\label{eqn:dataset}
\Dc^k[n] = \left\{\left(S^k_\rH(x;x_s,v_s),\Lambda_n^k[S^k_\rH](x_r, v_r)\right):\, (x_s,v_s)\in\Gamma_-, (x_r,v_r)\in\Gamma_+\right\} \,.
\end{equation}
We now formulate the generalized inverse scattering problem as
\begin{equation}\label{eqn:inverse_s_near1}
    \emph{to reconstruct $n$ using the information in $\Dc^k[n]$.}
\end{equation}

\subsubsection{Traditional inverse scattering problem}\label{rmk:classical} Given that we use a non-standard formulation of the inverse scattering problem, we will stress a couple of similarities and differences between the generalized and classical inverse scattering problems.

In particular, the form of the forward map introduced in our setting differs from the classical one, where the incident wave is typically a plane wave, meaning $u^{i,k}(x;v_s)=\exp(ikv_s\cdot x)$, where $v_s\in\mathcal{S}^{d-1}$, see~\cite{Ki:2011introduction}.

So the forward map is given by the far field map, $\widetilde{\Lambda}^k_n$:
\[
\widetilde{\Lambda}^k_n: u^{i,k}(x;v_s)\rightarrow u^{\infty,k}(\hat{x};v_s)\,,
\]
where $u^{\infty,k}:\mathcal{S}^{d-1}\rightarrow\mathbb{C}$ is defined as
\[
u^{\infty,k}(\hat{x};v_s)=\lim_{r\rightarrow\infty} ru^{s,k}(r\hat{x};v_s)\exp(-ikr)|_{\hat{x}\in\mathcal{S}^{k-1}},\quad \forall \hat{x}\in\mathcal{S}^{d-1}\,,
\]
with $u^{s,k}$ being the solution of \eqref{eqn:scattering}, where we leverage that $u^{i,k}(x;v_s)$ satisfies \eqref{eqn:Helmholtz_incident} with $S = 0$. Therefore in this setting, the data set induced by the forward map is defined as:
\[
\widetilde{\Dc}^k[n] = \left\{\left(u^{i,k}(x;v_s),\widetilde{\Lambda}^k_n\left[u^{i,k}\right](\hat{x})\right):\, v_s\in\mathcal{S}^{d-1}, \hat{x}\in\mathcal{S}^{d-1}\right\} \,.
\]
The well-posedness and stability of the inverse scattering problem in this context has been studied in \cite[Theorem 1.2]{HaHo:2001new}.

The differences from the classical inverse scattering formulation is two fold: i) we use a richer set of probing functions, instead of using incident waves that are directionally localized (as plane waves) or whose sources are localized (as Green's functions), we use tight beams that combine these two properties, and ii) instead of measuring the scattered wave-field on a manifold around the domain of interest, we multiply it  with a set of directional filters localized on the same manifold, and we compute its intensity. We should emphasize that this difference is significant. Take the plane-wave as the probing wave, as an example, it is only the direction of the incoming wave that can be tuned, and this composes $2$ dimensions of degrees of freedom in 3D with $v_s\in \mathcal{S}^{d-1}$. The way our source term is designed automatically carries $4$ dimensions of degrees of freedom with $(x_s,v_s)\in\Gamma_-$. Similarly, the way data gets taken also expands the degrees of freedom the measuring operator can access. It is a widely accepted fact that more data leads to more stable reconstruction. This will be indeed demonstrated in the later sections.

\begin{remark}
We note that even though the conventional inverse scattering problem has been shown to be ill-conditioned, a couple of strategies have been introduced in the literature to stabilize the problem. The most prominent strategy is to add the phase information (microlocally)~\cite{BaLeRa:1992sharp,StUhVaZh:2019travel,ChQiUhZh:2007new}. At the first look, the Husimi data~\eqref{eqn:Husimi} also extracts the phase information, by integrating the scattered wave with an oscillatory test function~\eqref{eqn:phi_form} that is localized in position and direction. In very simple cases, we can even show that the two sets of information is equivalent. For example, suppose the wave field is of the simple form of $u^k(x) = A(x)e^{\ri k p\cdot v}$ with $p\in\Sb^{d-1}$ and $A(x)\geq 0$, for all $x\in\Rb^d$. Then in the limit $k\to\infty$, we can fully recover $u^k(x)$, both the amplitude and the phase, on the boundary $\partial B_1$ using the Husimi data~\eqref{eqn:Husimi}
\[
\lim_{k\to\infty} H^k u^k(x,v) = |A(x)|^2 \delta(v-p)\,, \quad \forall (x,v)\in\partial B_1 \times \Sb^{d-1} \,.
\]
However, in general cases, we are not aware of results that translate Husimi data to the phase data. Indeed, according to~\cite{GrKoRa:2020phase,GrRa:2019stable,AlDaGrYi:2019stable}, this might be a very complicated phase retrieval problem that is beyond the scope of the current paper.
\end{remark}

\begin{remark}
Another strategy to stabilize the inverse scattering problem is to transform the Helmholtz equation back to the time-domain, and solve the inverse acoustic wave problem, with either full or partial data available for all time $T\geq0$. In various settings~\cite{BaZh:2014sensitivity,Is:2017inverse,Su:1990continuous,YuOlYa:2001global}, it is proved that the time-domain data is sufficient to reconstruct the medium. The wave equation and Helmholtz equation are Fourier transform of each other in time. Roughly speaking, the temporal data collected on the boundary translates to the boundary information for all frequency $k$. As such, the temporal data has wide-band information instead of being monochromatic, and thus is expected to be more stable. In our setting, though we require $k\gg 1$, we still use monochromatic information, and thus the data does not directly translate.
\\
\indent We should note, however, that though the time-domain data is expected to be more informative in theory, in practice, however, especially within the optimization-based reconstruction algorithm framework, the typical $\ell^2$ misfit loss function results in an extremely non-linear problem that often leads to cycle-skipping, and convergence to spurious, non-physical, local minima. The numerical problem is usually attenuated by using the time/frequency duality and localizing the frequency content of the data, which is then processed in a hierarchical fashion \cite{Ch:1997inverse,Pr:1999seismic}. These are beyond the focus of the paper.
\end{remark}

\subsection{High-frequency limit and inverse Liouville scattering problem}\label{sec:inverse_liouville}
The Liouville equation is a well studied classical model for describing particle propagation. Any system with a large number of identical particles can be described by the Liouville equation, or its variants, which is often written as:
\begin{equation}\label{eqn:liouville}
    v\cdot\nabla_xf + \frac{1}{2}\nabla_x n\cdot\nabla_vf = S_\rL(x,v)\,,
\end{equation}
where $f(x,v)$ characterizes the number of particles on the phase space $(x,v)$. Following the characteristics, we see that the particles follow Newton's second law:
\[
\dot{x} = v\,,\quad\dot{v} = \frac{1}{2}\nabla_x n\,.
\]
As usual in classical mechanics, we can define the Hamiltonian for each particle to be
\[
H(x(t),v(t)) = 2|v(t)|^2 - n(x(t))\,,
\]
which is preserved along the characteristics of the particles, i.e., $\frac{\rd H}{\rd t} = 0$.

We use \eqref{eqn:liouville} to describe photon propagation, and use the same setup as that in Section~\ref{sec:inverse_wave}. The source term $S_\rL(x,v)$ on the right-hand side of~\eqref{eqn:liouville} describes how laser beams are shone into the medium, and takes the form of:
\begin{equation}\label{eqn:source_l}
S_\rL(x,v;x_s,v_s) = \phi(x-x_s)\psi(v-v_s)\,,\quad\text{with}\quad (x_s,v_s)\in\Gamma_-\,,
\end{equation}
where both $\phi:\mathbb{R}^d\to\mathbb{R}$ and $\psi:\mathbb{R}^d\to\mathbb{R}$ are radially symmetric smooth functions that concentrate at the origin. By setting $(x_s,v_s)\in\Gamma_-$, we have the laser beam shining from the boundary $\partial{B_1}$ inward to the domain. The concentration of the beam is determined by $\phi$ and $\psi$ in physical- and velocity-space respectively.

Similar to the previous section, we take the measurements of the light intensity at the boundary pointing outside of the domain. To do so, we set the test function $\zeta(x,v)$ and the measurements would be its convolution with the solution to~\eqref{eqn:liouville}:
\begin{equation}\label{eqn:defL}
Lf(x,v)=f\ast\zeta(x,v)\,.
\end{equation}
The physical setup is clear. Imaging $\zeta$ a blob centers around $(x,v)=(0,0)$, then $Lf(x_r,v_r)$ essentially represents a measuring equipment that takes in light intensity concentrated around $(x_r,v_r)$ with the concentration determined by the size of the blob. The specific format of $\zeta$ will be specified in Section~\ref{sec:limit}.

\noindent \textbf{Forward Map:} we define the forward map in a similar fashion as in Section \ref{sec:inverse_wave}. For any $(x_s,v_s)\in\Gamma_-$, we solve~\eqref{eqn:liouville} with $S_\rL$ defined in \eqref{eqn:source_l}, and test the solution on $\zeta(x,v)$ evaluated on $\Gamma_+$:
\[
\Lambda_n:\quad S_\rL(x,v;x_s,v_s) \to Lf(x_r,v_r)|_{\Gamma_+}\,.
\]
As a consequence, the dataset generated by this forward map is the collection of:
\begin{equation}\label{eqn:ldataset}
\Dc[n] = \left\{\left(S_\rL(x,v;x_s,v_s),\Lambda_n[S_\rL](x_r, v_r)\right):\, (x_s,v_s)\in\Gamma_-, (x_r,v_r)\in\Gamma_+\right\} \,.
\end{equation}
While the forward problem is to compute and construct this $\Dc[n]$ for any given $n$, the inverse problem amounts to inferring $n$ using the information in $\Dc[n]$.

\section{Relation between the two problems in the high-frequency regime}\label{sec:limit}

In this section we discuss the connection between the forward maps for the wave- and particle-like descriptions introduced in the section above. We start introducing the Wigner transform, and we use it to present the equivalence of the two descriptions for the forward maps in the high-frequency regime. Then we introduce the Husimi transform to take the limit of the measuring operator, and this is used to show the equivalence of the two inverse problems. Finally, we briefly introduce the stability of the inverse Liouville problem.

\subsection{High-frequency limit of the forward problem}\label{sec:limit_f}
We first present their connection in the forward setting. We discuss the  derivation of the Liouville equation as the limiting equation for the Helmholtz. This process is typically called taking the ``classical"-limit, to reflect the passage from quantum mechanics to classical mechanics by linking the Schr\"odinger equation to the Liouville equation in the small $\hbar$ regime.

Among the multiple techniques to derive the classical limit we utilize the Wigner transform~\cite{GeMaMaPo:1997homogenization,RyPaKe:1996transport,BaPaRy:2002radiative,ChLiYa:2021classical}. Compared to other techniques, such as WKB expansion~\cite{EnRu:2003computational} and Gaussian beam expansion~\cite{TaQiRa:2007mountain,LuYa:2010frozen,QiYi:2010gaussian}
, Wigner transform presents the equation on the phase space, and avoids the emerging singularities during the evolution. Let $\uk_1$ and $\uk_2$ be two functions, then the corresponding Wigner transform is defined as
\begin{equation}\label{eqn:Wigner_transform}
\wk[\uk_1,\uk_2](x,v) = \frac{1}{(2\pi)^d} \int_{\Rb^d} e^{\ri v\cdot y} \uk_1 \left(x - \frac{y}{2k}\right) \overline{\uk_2}\left(x +  \frac{y}{2k} \right)\rmd y\,.
\end{equation}
Here $\overline{\uk_2}$ is the complex conjugate of $\uk_2$. We furthermore abbreviate $\wk[\uk_1,\uk_2]$ to be $\wk[\uk]$.

The Wigner transform $\wk[\uk]$ is defined on the phase space, is always real-valued, and the moments in $v$ of $\wk[\uk]$ carry interesting physical meanings. In particular, the first moment
recovers the energy density $\Ec^k$:
\begin{equation}\label{eqn:energy_density}
\Ec^k(x) = \int_{\Rb^d} \wk[\uk] (x,v) \rmd v=\left|\uk(x)\right|^2\,,
\end{equation}
and its second moment expresses the energy flux $\Fc^k$:
\begin{equation}\label{eqn:energy_flux}
\Fc^k(x) = \int_{\Rb^d} v \wk[\uk] (x,v) \rmd v= \frac{1}{k} \im \left( \overline{\uk(x)} \nabla_x \uk(x) \right)\,.
\end{equation}

Most importantly, if $\uk$ solves the Helmholtz equation~\eqref{eqn:Helmholtz_direct}, one can show that $\wk[\uk]$ solves an equation in the form of the radiative transfer equation, and in the $k\to\infty$ limit, this degenerates to the Liouville equation~\eqref{eqn:liouville}. In what follows we seek to make this statement more precise by defining the functional space and an appropriate metric.

Let $\lambda>0$, we define $X_\lambda$ a space that contains all scalar real valued functions defined on the phase-space $\mathbb{R}^3\times\mathbb{R}^3$:
\begin{equation}\label{eqn:Xlambda}
X_\lambda=\left\{\phi(x,y)\,\, \middle| \, \,\int_{\mathbb{R}^3}\sup_{x\in\mathbb{R}^3}(1+|x|+|\xi|)^{1+\lambda}|\hat{\phi}(x,\xi)|\rmd \xi<\infty\right\}\,,
\end{equation}
with associated norm given by
\[
\|\phi\|_{X_\lambda}= \int_{\mathbb{R}^3}\sup_{x\in\mathbb{R}^3}(1+|x|+|\xi|)^{1+\lambda}|\hat{\phi}(x,\xi)|\rmd \xi\,,
\]
where $\hat{\phi}(x,\xi)=\frac{1}{(2\pi)^d} \int_{\Rb^d} \phi(x,y)e^{-i\xi\cdot y}\rmd y$ is the Fourier transform in velocity-space. Now we cite a result from ~\cite[Theorem 3.11, 3.12]{BeCaKaPe:2002high}.
\begin{theorem}\label{thm:formal}
Let $n(x)$ be a $C^2(\mathbb{R}^d;\mathbb{R}_+)$  function that satisfies certain conditions (see Remark~\ref{rmk:condition_n}). Let $u^k$ be the solution to~\eqref{eqn:Helmholtz_direct} with radiation conditions, where the source term $S^k_\rH$ is defined in~\eqref{eqn:source}. Then the Wigner transform of $\uk$, denoted by $\fk(x,v) = \wk[\uk](x,v)$ solves
\begin{equation}\label{eqn:wigner_eqn}
\begin{aligned}
v\cdot\nabla_x \fk + \frac{1}{2}\Lc_n^k[\fk] = -\frac{1}{k} \im \left( \wk[u^k,S^k] \right)  \,,\quad (x,v)\in\Rb^{2d}\,,
\end{aligned}
\end{equation}
with the operator $\Lc_n^k$ defined as
\begin{equation}\label{eqn:wigner_Lc}
\Lc_n^k[\fk] := \frac{\ri}{(2\pi)^d} \int_{\Rb^{2d}} \delta^k [n](x,y) \fk(x,p) e^{\ri y(v-p)}  \, \rmd y \, \rmd p\,.
\end{equation}
Here $\delta^k [n](x,y) = k\left[ n\left(x+\frac{y}{2k}\right) - n\left(x-\frac{y}{2k}\right) \right]$. Furthermore, when $k\rightarrow\infty$, $\fk$ converges in weak-$\star$ sense to $f(x,v)$ in $\left(X_\lambda\right)^\star$, the solution to the Liouville equation~\eqref{eqn:liouville} with the radiation condition $\lim_{|x|\rightarrow\infty}f(x,v)=0$ for all $x\cdot v<0$, and the source $S_\rL(x,v)$ is:
\begin{equation}\label{eqn:source_limit}
S_\rL(x,v) = (2\pi)^d \frac{\pi}{2}\delta(x-x_s) |\hat{S}_{v_s}(v)|^2\delta\left(|v|^2=n(x_s)\right) \,.
\end{equation}
Here $\hat{S}_{v_s}$ denotes the Fourier transform, and the delta function $\delta\left(|v|^2=n(x_s)\right) \in \mathcal{D}'(\mathbb{R}^d)$ means
\[
\langle \delta\left(|v|^2=n(x_s)\right) \,,g\rangle =\int_{|v|^2=n(x_s)} g(v)dS_v,\quad \forall g\in \mathcal{S}(\mathbb{R}^d)\,.
\]
\end{theorem}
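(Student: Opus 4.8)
The plan is to establish the two assertions in turn: the exact transport-type identity~\eqref{eqn:wigner_eqn} for $\fk=\wk[\uk]$ at finite $k$, and then the weak-$\star$ limit $\fk\rightharpoonup f$ as $k\to\infty$. The finite-$k$ identity is a self-contained computation. Writing $a=x-\tfrac{y}{2k}$ and $b=x+\tfrac{y}{2k}$, the mixed second derivatives of the integrand combine into the algebraic identity
\begin{equation*}
\Delta\uk(a)\,\overline{\uk}(b)-\uk(a)\,\Delta\overline{\uk}(b)=-2k\,\nabla_x\cdot\nabla_y\!\left[\uk(a)\,\overline{\uk}(b)\right]\,.
\end{equation*}
Substituting $\Delta\uk=S^k-k^2n\,\uk$ from~\eqref{eqn:Helmholtz_direct} and its conjugate into the left-hand side replaces the Laplacians by a source part $S^k(a)\overline{\uk}(b)-\uk(a)\overline{S^k}(b)$ and a potential part $k^2\bigl(n(b)-n(a)\bigr)\uk(a)\overline{\uk}(b)$. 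I would then multiply by $\tfrac{1}{(2\pi)^d}e^{\ri v\cdot y}$ and integrate in $y$: on the right the $\nabla_y$ integrates by parts against $e^{\ri v\cdot y}$ to produce exactly $v\cdot\nabla_x\fk$; the potential part, after inserting the inversion formula $\uk(a)\overline{\uk}(b)=\int e^{-\ri p\cdot y}\fk(x,p)\,\rmd p$ and recognizing $k\bigl(n(b)-n(a)\bigr)=\delta^k[n](x,y)$, yields $\tfrac12\Lc_n^k[\fk]$; and the source part collapses to $-\tfrac1k\im\bigl(\wk[\uk,S^k]\bigr)$ via the conjugation symmetry $\overline{\wk[\uk_1,\uk_2]}=\wk[\uk_2,\uk_1]$. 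Collecting terms gives~\eqref{eqn:wigner_eqn}.

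\emph{Uniform bounds and compactness.} To take the limit I would first show that $\{\fk\}$ is bounded in $(X_\lambda)^\star$, uniformly in $k$. This rests on a uniform (in $k$) $L^2$-type energy estimate for the field $\uk$ driven by the normalized beam~\eqref{eqn:source}---the scaling $k^{(3+d)/2}$ and the constant $C(\sigma,d)$ are chosen precisely so that the limiting Wigner mass is of order one---together with the fact that the $X_\lambda$-norm is designed to control the Fourier-in-velocity content of a Wigner distribution, giving $|\langle\fk,\phi\rangle|\le C\,\|\uk\|_{L^2}^2\,\|\phi\|_{X_\lambda}$. Banach--Alaoglu then extracts a weak-$\star$ convergent subsequence $\fk\rightharpoonup f$.

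\emph{Passage to the limit in the operator.} Testing~\eqref{eqn:wigner_eqn} against $\phi\in X_\lambda$, the transport term passes to the limit by duality and weak-$\star$ convergence, the space $X_\lambda$ being chosen so that $v\cdot\nabla_x\phi$ stays admissible. For the potential operator I would Taylor expand, using $n\in C^2$, $\delta^k[n](x,y)=\nabla_x n(x)\cdot y+\Oc\!\left(k^{-2}|y|^3\right)$, verify that the remainder is negligible on $X_\lambda$, and evaluate the leading contribution through $\tfrac{1}{(2\pi)^d}\int y\,e^{\ri y\cdot(v-p)}\,\rmd y=-\ri\nabla_v\delta(v-p)$; this turns $\tfrac12\Lc_n^k$ into the velocity drift $\tfrac12\nabla_x n\cdot\nabla_v$ and recovers the Liouville operator $v\cdot\nabla_x f+\tfrac12\nabla_x n\cdot\nabla_v f$.

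\emph{Identification of the source (the main obstacle).} The delicate step is proving that $-\tfrac1k\im\bigl(\wk[\uk,S^k]\bigr)$ converges to $S_\rL$ as in~\eqref{eqn:source_limit}. Here I would analyze the high-frequency structure of $\uk$ driven by the wavepacket~\eqref{eqn:source}--\eqref{eqn:S}: the rescaling concentrates the source at $x_s$, producing the factor $\delta(x-x_s)$, while its Fourier content $\hat S_{v_s}$---a Gaussian centered at $v_s$---supplies the amplitude $|\hat S_{v_s}(v)|^2$. The energy-shell constraint $\delta(|v|^2=n(x_s))$ and the constant $\tfrac{\pi}{2}$ arise from the imaginary part of the outgoing Helmholtz resolvent on the real axis, a Sokhotski--Plemelj phenomenon $\im\tfrac{1}{|v|^2-n-\ri 0}=\pi\,\delta(|v|^2-n)$; justifying this resolvent asymptotics near the source, and controlling the scattered field away from it, is the heart of the radiative-transport limit and the part I expect to demand the most care. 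Finally, since the limiting Liouville problem with the stated radiation condition has a unique solution, the limit is independent of the subsequence and the full family $\fk$ converges.
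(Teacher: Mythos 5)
Your proposal is correct and follows essentially the same route as the paper's own derivation in Appendix~\ref{appendix:formalderivation}: the exact Wigner equation via the density-matrix/mixed-derivative identity, the Liouville drift from the Taylor expansion of $\delta^k[n]$ (the paper's limit of its $Z^k$ term), and the source term via the rescaled field near $x_s$ combined with the Sokhotski--Plemelj limit of the outgoing Helmholtz resolvent. The extra functional-analytic scaffolding you sketch (uniform $(X_\lambda)^\star$ bounds, Banach--Alaoglu, uniqueness of the limiting Liouville problem) is precisely what the paper delegates to the cited rigorous references~\cite{BeCaKaPe:2002high,CaPeRu:2002high}, its own argument being presented only as a formal derivation.
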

Suppose $S_v$ takes the form of~\eqref{eqn:source}, we can explicitly calculate its Fourier transform:
\[
|\hat{S}_{v_s}(v)|^2 = C(\sigma,d)^2 \frac{1}{(2\pi)^d\sigma^{2d}} e^{-\frac{|v-{v_s}|^2}{\sigma^2}} \,.
\]

\begin{remark}\label{rmk:condition_n}
The formal derivation of the limit is shown in Appendix~\ref{appendix:formalderivation}. To prove it rigorously, we refer to~\cite[Theorem 3.11, 3.12]{BeCaKaPe:2002high} and~\cite{CaPeRu:2002high}. The conditions for a rigorous proof are rather complicated to obtain. However, we mention that if $n$ is radially symmetric, i.e., $n(x) = n(|x|)$, the statement of the theorem holds true rigorously.

\end{remark}

Theorem~\ref{thm:formal} suggests that the wave model and the particle model are asymptotically equivalent in the high-frequency regime. According to~\eqref{eqn:source_limit}, the source term concentrates at $(x_s,v_s)$, the source location and the source velocity, when $k\to\infty$. The concentration on $x$ is already achieved by taking to limit as $k\to\infty$, but the concentration profile in $v$ still needs to be tuned by $\sigma$. Smaller $\sigma$ results in a more concentrated source in this limiting regime. Let $\sigma\to 0$, we have the source term $S_\rL$ turning into:
\begin{equation}\label{eqn:limit_sigma}
\begin{aligned}
(2\pi)^d\frac{\pi}{2} \delta(x-x_s) |\hat{S}_{v_s}(v)|^2\delta(|v|=1)
&= \delta(x-x_s)\left(\frac{1}{\sigma\sqrt{\pi}}\right)^{d-1} e^{-\frac{|v-{v_s}|^2}{\sigma^2}} \delta(|v|=1)\\
&\rightarrow \delta(x-x_s)\delta(v-{v_s})\,,
\end{aligned}
\end{equation}
where we used $n(x_s) = 1$, given that $x_s$ is out of the domain interest $B_1$.

In this specific limit, we have the explicit solution to the Liouville equation~\eqref{eqn:liouville}:
\begin{equation}\label{eqn:limitoff}
f(x,v) = \delta_{(x(s;(x_s,{v_s})),v(s;(x_s,{v_s})))}\,, \quad {k\to\infty} \,,
\end{equation}
where $(x(s;(x_s,{v_s})),v(s;(x_s,{v_s})))$ are the location and velocity of a particle at time $s$ that starts off at $(x_s,v_s)$, meaning $(x(0;(x_s,{v_s})),v(0;(x_s,{v_s})))=(x_s,v_s)$ and
\begin{equation}\label{eqn:chareqn}
\left\{
\begin{aligned}
&\frac{\rmd x(s;(x_s,{v_s}))}{\rmd s}=v(s;(x_s,{v_s}))\,,\\
&\frac{\rmd v(s;(x_s,{v_s}))}{\rmd s}=\frac{1}{2}\nabla_xn(x(s;(x_s,{v_s})))\,.
\end{aligned}
\right.
\end{equation}
The formulation in~\eqref{eqn:limitoff} means in this limit, with $k\to\infty$ and $\sigma\ll 1$, the wave becomes a curved ray that follows the trajectory of the particle that is governed by Newton's laws. As a consequence, recall the definition of energy and energy flux in~\eqref{eqn:energy_density}-\eqref{eqn:energy_flux}:
\[
\lim_{\sigma\to 0}\lim_{k\to\infty}\Ec^k(x)={\bf 1}_{s>0}\delta_{x(s;(x_s,{v_s}))},\quad \lim_{\sigma\to0}\lim_{k\to\infty}\Fc^k(x)={\bf 1}_{s>0}\delta_{x(s;(x_s,{v_s}))}v(s;(x_s,{v_s}))\,,
\]
suggesting that $\Ec^k$ and $\Fc^k$ respectively show approximately the location and velocity of the trajectory.

\subsection{High-frequency limit of the inverse problem}\label{sec:limit_i}
In the prequel we linked the two forward problems. We now proceed to connect the two inverse problems, by evaluating the convergence of the measurements. To do so, we first introduce Lemma~\ref{lem:connection} from \cite[Section 2.5]{CoRo:2012coherent} that connects the Husimi and Wigner transforms.

\begin{lemma}\label{lem:connection}
Assume $u\in L^2(\mathbb{R}^d;\mathbb{R})$ , and let $\Hk u$ be the Husimi transform defined in~\eqref{eqn:Husimi} with $\phi^k_v$ being the test function (defined in~\eqref{eqn:phi_form}). Denote $f^k = \wk[u]$, and $\Gk=\wk[\phi_0^k]$, the Wigner transform of $\uk$ and $\phi_{0}^k$ respectively. Here $\phi_0^k = \phi^k_{v=0}$. Then
\begin{equation}
\Hk u (x,v) = \fk \ast \Gk (x,v)\,, \quad \forall (x,v)\in\Rb^{2d} \,.
\end{equation}
\end{lemma}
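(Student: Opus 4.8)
The plan is to verify the identity by a direct computation, starting from the right-hand side and unfolding the two Wigner transforms. First I would write the phase-space convolution explicitly as
$$f^k * G^k(x,v) = \int_{\Rb^{2d}} f^k(x',v')\,G^k(x-x',v-v')\,\rmd x'\,\rmd v',$$
and substitute the integral definition~\eqref{eqn:Wigner_transform} into each factor, which introduces two auxiliary ``difference'' variables $y$ (from $f^k=W^k[u]$) and $w$ (from $G^k=W^k[\phi_0^k]$). The decisive first step is the $v'$-integration: the only $v'$-dependent factors are the exponentials $e^{\ri v'\cdot y}$ and $e^{-\ri v'\cdot w}$ from the two Wigner kernels, so $\int_{\Rb^d}e^{\ri v'\cdot(y-w)}\,\rmd v' = (2\pi)^d\delta(y-w)$ collapses $w$ onto $y$ and merges the two difference variables into one. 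As written this is formal, but it is legitimate because $u\in L^2$ and $\phi_0^k$ is Schwartz (Gaussian decay), so Fubini and Plancherel apply; alternatively one can run the argument in reverse starting from the left-hand side to avoid the distributional identity altogether.

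Next I would change variables from the center/difference pair $(x',y)$ to the two physical points $a=x'-\tfrac{y}{2k}$ and $b=x'+\tfrac{y}{2k}$ — precisely the arguments of $u$ and $\overline u$. The Jacobian gives $\rmd x'\,\rmd y = k^d\,\rmd a\,\rmd b$, which, combined with the surviving $(2\pi)^{-d}$, reproduces exactly the prefactor $(k/2\pi)^d$ of the Husimi transform~\eqref{eqn:Husimi}. The phase becomes $e^{\ri k v\cdot(b-a)}$ and the windows become $\phi_0^k(x-b)$ and $\overline{\phi_0^k}(x-a)$, so the integrand separates into a product of an $a$-integral and a $b$-integral,
$$f^k*G^k(x,v) = \Big(\tfrac{k}{2\pi}\Big)^d\Big(\int u(a)\,\overline{\phi_0^k}(x-a)\,e^{-\ri kv\cdot a}\,\rmd a\Big)\Big(\int \overline{u}(b)\,\phi_0^k(x-b)\,e^{\ri kv\cdot b}\,\rmd b\Big),$$
the two factors being complex conjugates of one another; hence $f^k*G^k = (k/2\pi)^d|R|^2$ with $R=\int u(a)\,\overline{\phi_0^k}(x-a)\,e^{-\ri kv\cdot a}\,\rmd a$.

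It then remains to match $|R|^2$ with $|u*\phi_v^k(x)|^2$. Expanding $u*\phi_v^k(x)=\int u(z)\,\phi_0^k(x-z)\,e^{-\ri kv\cdot(x-z)}\,\rmd z$ and pulling out the unimodular factor $e^{-\ri kv\cdot x}$ gives $H^ku(x,v)=(k/2\pi)^d\big|\int u(z)\,\phi_0^k(x-z)\,e^{\ri kv\cdot z}\,\rmd z\big|^2$. Here the hypothesis $u\in L^2(\Rb^d;\Rb)$ enters decisively: because $u$ is real, $\overline R=\int u(a)\,\phi_0^k(x-a)\,e^{\ri kv\cdot a}\,\rmd a$ is exactly the integral defining $H^ku$, so $|R|^2=|u*\phi_v^k(x)|^2$ and the two sides coincide.

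The main obstacle — and the point most worth flagging — is precisely this final matching of phases. With the paper's Wigner convention~\eqref{eqn:Wigner_transform}, which places the un-conjugated factor at $x-\tfrac{y}{2k}$, the $a$-integral generated by $f^k*G^k$ carries frequency $-kv$, whereas the windowed transform defining $H^ku$ naturally carries $+kv$; for a generic complex field these have different moduli and the identity would fail. The reality of $u$ (together with the real profile $\chi$) is what reconciles the opposite signs through complex conjugation. I would therefore be careful to make the use of this hypothesis explicit rather than let the phase bookkeeping slide, since it is the only genuinely nontrivial ingredient in an otherwise mechanical calculation.
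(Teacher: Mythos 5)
Your proof is correct, and it reaches the identity by a genuinely different route than the paper's. The paper's proof is modular: it cites the Moyal identity $(\Wk[h_1],\Wk[h_2])_{L^2(\Rb^{2d})} = \left(\frac{k}{2\pi}\right)^d |(h_1,h_2)_{L^2(\Rb^d)}|^2$ together with the translation--modulation covariance $\Wk[\phi_{v}^k(x-\cdot)](y,p) = \Wk[\phi_{0}^k](x-y,v-p)$, rewrites $\Hk u$ as $\left(\frac{k}{2\pi}\right)^d\left|(u(\cdot),\phi_v^k(x-\cdot))_{L^2(\Rb^d)}\right|^2$, and concludes in one chain of equalities. Your computation---unfolding both Wigner kernels, collapsing $w$ onto $y$ through the $v'$-integration, and changing to the variables $a=x'-\frac{y}{2k}$, $b=x'+\frac{y}{2k}$---is in substance a self-contained proof of that same Moyal-plus-covariance combination for the specific pair $(u,\phi_0^k)$, so the underlying algebra coincides, but you invoke no external lemma. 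What your route buys is precisely the point you flag at the end: it makes explicit where the hypothesis $u\in L^2(\Rb^d;\Rb)$ is consumed. The paper uses it too, but silently: its second equality replaces $u\ast\phi_v^k(x)$ by the inner product $(u(\cdot),\phi_v^k(x-\cdot))_{L^2(\Rb^d)}$, which equals $u\ast\phi_{-v}^k(x)$ since $\chi$ is real, and the identification of the two moduli $|u\ast\phi_{-v}^k|=|u\ast\phi_{v}^k|$ is exactly the conjugation step that requires $u$ real---the same $\pm kv$ phase mismatch you describe. What the paper's route buys is brevity and reliance on standard, citable facts. One minor caution: the lemma assumes only that $\chi$ is smooth, radially symmetric, and vanishing at infinity, not Gaussian, so the distributional step $\int_{\Rb^d}e^{\ri v'\cdot(y-w)}\,\rmd v'=(2\pi)^d\delta(y-w)$ should be justified by Plancherel in the $v'$ variable (or by running the computation from the left-hand side, as you suggest) rather than by appealing to Gaussian decay of $\phi_0^k$; your factorization $\fk\ast\Gk=\left(\frac{k}{2\pi}\right)^d|R|^2$ is otherwise valid for any admissible window.
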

\begin{proof}
This theorem is a directly result of the Moyal identity
\begin{equation}\label{eqn:p1}
(\Wk[h_1],\Wk[h_2])_{L^2(\Rb^{2d})} = \left( \frac{k}{2\pi} \right)^d |(h_1,h_2)_{L^2(\Rb^d)}|^2 \,,\quad \forall h_1,h_2\in L^2(\mathbb{R}^d;\mathbb{R})\,,
\end{equation}
and the fact that
\begin{equation}\label{eqn:p2}
\Wk[\phi_{v}^k(x-\cdot)](y,p) = \Wk[\phi_{0}^k](x-y,v-p) \,.
\end{equation}

Using \eqref{eqn:Husimi}, we have
\[ \begin{aligned}
\Hk u(x,v) &= \left( \frac{k}{2\pi}\right)^d \left|u\ast\phi_{v}^k\right|^2\\
&=\left( \frac{k}{2\pi}\right)^d \left| \left(u(\cdot),\phi_{v}^k(x-\cdot)\right)_{L^2(\Rb^d)}\right|^2\\
&=
\left(W^k[u],W^k[\phi_{v}^k(x-\cdot)]\right)_{L^2(\Rb^{2d})}\\
&=
\left(W^k[u],W^k[\phi_{0}^k](x-\cdot,v-\cdot)\right)_{L^2(\Rb^{2d})}\\
&=f^k* G^k\,,
\end{aligned} \]
where we use \eqref{eqn:p1} in the third equality, \eqref{eqn:p2} in the fourth equality, and the definitions of $f^k$ and $G^k$ in the last equality.
\end{proof}

This lemma connects the measurement of $\uk$ with the measurement on the phase space. Testing $\uk$ using the test function $\phi_0^k$ is translated to testing $f^k$ using the test function $\Gk$. This allows us to pass to the limit on the phase space. Combining with Theorem \ref{thm:formal}, we have the following proposition:

\begin{proposition}\label{prop:limit}
Let the assumption in Theorem \ref{thm:formal} hold true. Denote $f^k=\wk[\uk]$, with $\uk$ solving the Helmholtz equation~\eqref{eqn:Helmholtz_direct} with the source term $S_{\rH}$ defined in~\eqref{eqn:source}, and denote $f$ the solution to the Liouville equation~\eqref{eqn:liouville} with source term $S_\rL$  defined in~\eqref{eqn:source_limit}. If $\chi$ takes the form of~\eqref{eqn:measure_profile}, so that $\Gk$ takes the form of:
\begin{equation}\label{eqn:Gk}
\Gk(x,v) = \left( \frac{k}{\pi} \right)^d \exp\left( -k\left(|x|^2+|v|^2\right) \right) \,,
\end{equation}
as $k\rightarrow\infty$, we have:
\[
\fk \ast \Gk (x,v)\rightarrow f(x,v)
\]
weak-$\star$ in $\left(X_\lambda\right)^\star$.
\end{proposition}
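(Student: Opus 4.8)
The plan is to exploit that $\Gk$, with the explicit Gaussian form~\eqref{eqn:Gk}, is a \emph{normalized approximate identity} on phase space: a direct Gaussian computation gives $\int_{\Rb^{2d}}\Gk(x,v)\,\rmd x\,\rmd v = 1$, and the mass concentrates at the origin as $k\to\infty$, so that $\Gk\to\delta_{(0,0)}$ in the sense of distributions. Heuristically this yields $\fk\ast\Gk\to f\ast\delta = f$; the work is to make this precise in the weak-$\star$ topology of $(X_\lambda)^\star$ while simultaneously using the weak-$\star$ convergence $\fk\to f$ supplied by Theorem~\ref{thm:formal}. (By Lemma~\ref{lem:connection} the object $\fk\ast\Gk$ is exactly the Husimi measurement $\Hk\uk$, but the proof only uses the convolution structure.) Testing against an arbitrary $\phi\in X_\lambda$ and using that $\Gk$ is even to move the convolution onto the test function, $\langle\fk\ast\Gk,\phi\rangle = \langle\fk,\Gk\ast\phi\rangle$, I would split
\[
\langle\fk\ast\Gk - f,\phi\rangle = \langle\fk,\Gk\ast\phi - \phi\rangle + \langle\fk - f,\phi\rangle.
\]

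The second term tends to zero directly by Theorem~\ref{thm:formal}. For the first term I would invoke the uniform bound $M := \sup_k\|\fk\|_{(X_\lambda)^\star} < \infty$, which follows from Theorem~\ref{thm:formal} and Banach–Steinhaus (a weak-$\star$ convergent sequence in the dual of the Banach space $X_\lambda$ is bounded), together with $|\langle\fk,\Gk\ast\phi - \phi\rangle|\le M\,\|\Gk\ast\phi - \phi\|_{X_\lambda}$. Thus the whole statement reduces to the key claim that convolution with $\Gk$ acts as an approximate identity in the \emph{strong} topology of $X_\lambda$, i.e.\ $\|\Gk\ast\phi - \phi\|_{X_\lambda}\to 0$ for every $\phi\in X_\lambda$. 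Taking the Fourier transform in velocity (the transform appearing in the definition~\eqref{eqn:Xlambda}) and using that $\Gk$ factorizes as a product of Gaussians in $x$ and $v$, I would compute
\[
\widehat{\Gk\ast\phi}(x,\xi) = e^{-|\xi|^2/(4k)}\,\bigl(g^k\ast_x\hat\phi(\cdot,\xi)\bigr)(x),\qquad g^k(x)=(k/\pi)^{d/2}e^{-k|x|^2},
\]
which isolates the two mechanisms: the scalar factor $e^{-|\xi|^2/(4k)}\to 1$ performs the velocity smoothing, while $g^k\ast_x(\cdot)$ is a spatial mollification converging to the identity.

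To upgrade the pointwise convergences $e^{-|\xi|^2/(4k)}\to 1$ and $g^k\ast_x\hat\phi(\cdot,\xi)\to\hat\phi(\cdot,\xi)$ into norm convergence in $X_\lambda$, I would proceed in three steps. First, a \emph{uniform operator bound} $\sup_k\|\Gk\ast\,\cdot\,\|_{X_\lambda\to X_\lambda}\le C_\lambda$: using $|e^{-|\xi|^2/(4k)}|\le 1$ and the elementary inequality $(1+|x|+|\xi|)^{1+\lambda}\le(1+|x-x'|)^{1+\lambda}(1+|x'|+|\xi|)^{1+\lambda}$ inside the convolution, the bound reduces to the finiteness of $\int g^k(w)(1+|w|)^{1+\lambda}\,\rmd w$, which after rescaling $w=z/\sqrt{k}$ is dominated uniformly in $k\ge 1$ by $\pi^{-d/2}\int e^{-|z|^2}(1+|z|)^{1+\lambda}\,\rmd z < \infty$. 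Second, density of Schwartz functions in $X_\lambda$. Third, for a fixed smooth $\phi$, $\|\Gk\ast\phi - \phi\|_{X_\lambda}\to 0$ by dominated convergence, using $\|\phi\|_{X_\lambda}$ to produce an integrable majorant for the weighted integrand. A standard $3\eps$-argument combining these three steps yields the approximate-identity claim for all $\phi\in X_\lambda$, and hence the proposition.

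The main obstacle is precisely this approximate-identity estimate in $X_\lambda$, and specifically the fact that the norm~\eqref{eqn:Xlambda} takes a supremum over the spatial variable $x$ \emph{inside} the $\xi$-integral while controlling no regularity of $\hat\phi$ in $x$. A generic $\phi\in X_\lambda$ need not possess any modulus of continuity in $x$, so the spatial mollification $g^k\ast_x\hat\phi$ cannot be shown to converge uniformly in $x$ without first passing to a smooth dense subclass; this is exactly why the uniform operator bound and the density step are indispensable rather than cosmetic. The one computation demanding genuine care is the interaction, in the operator-bound step, between the polynomial weight $(1+|x|+|\xi|)^{1+\lambda}$ and the Gaussian tails of $g^k$.
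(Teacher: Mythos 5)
Your overall skeleton coincides with the paper's own proof: you move the (even) Gaussian kernel onto the test function, split $\langle\fk\ast\Gk-f,\phi\rangle=\langle\fk,\Gk\ast\phi-\phi\rangle+\langle\fk-f,\phi\rangle$, dispose of the second term by the weak-$\star$ convergence of Theorem~\ref{thm:formal}, and bound the first term by $\sup_k\|\fk\|_{(X_\lambda)^\star}\,\|\Gk\ast\phi-\phi\|_{X_\lambda}$. The paper does exactly this and then simply \emph{asserts} that $\Gk\ast\phi\to\phi$ in $X_\lambda$ for every $\phi\in X_\lambda$. You go further and try to prove that assertion; this is where your argument develops a genuine gap.

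The gap is the density step: Schwartz functions are \emph{not} dense in $X_\lambda$ as defined in~\eqref{eqn:Xlambda}. Membership in $X_\lambda$ imposes no continuity in $x$ whatsoever, while the $X_\lambda$-norm dominates $\int\sup_x|\hat\phi(x,\xi)-\hat\psi(x,\xi)|\,\rmd\xi$, so $X_\lambda$-approximation forces uniform-in-$x$ approximation of the partial Fourier transform; uniform limits of continuous functions are continuous, hence any $\phi$ whose $\hat\phi$ jumps in $x$ stays at a fixed positive distance from every Schwartz function. Concretely, take $\phi(x,v)=\mathbf{1}_{B_1}(x)\beta(v)$ with $\hat\beta=b\in C_c^\infty$, $b\ge 0$, $b\not\equiv 0$; then $\hat\phi(x,\xi)=\mathbf{1}_{B_1}(x)b(\xi)$ and $\|\phi\|_{X_\lambda}=\int(2+|\xi|)^{1+\lambda}b(\xi)\,\rmd\xi<\infty$, yet for any $\psi$ with $\hat\psi(\cdot,\xi)$ continuous in $x$, comparing the two one-sided limits at a point $|x_0|=1$ gives $\sup_x|\hat\phi(x,\xi)-\hat\psi(x,\xi)|\ge b(\xi)/2$, whence $\|\phi-\psi\|_{X_\lambda}\ge\tfrac12\int b>0$. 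Worse, the same example shows that no alternative dense subclass can rescue the three-step plan: by your (correct) factorization, $\widehat{\Gk\ast\phi}(x,\xi)=e^{-|\xi|^2/(4k)}b(\xi)\,(g^k\ast_x\mathbf{1}_{B_1})(x)$ is continuous in $x$, so the identical jump argument gives $\|\Gk\ast\phi-\phi\|_{X_\lambda}\ge\tfrac12\int b$ for every $k$. In other words, the approximate-identity property is false on all of $X_\lambda$ as literally defined, and if it held on any dense subclass your (correct) uniform operator bound would propagate it to all of $X_\lambda$ --- a contradiction. What your dominated-convergence step actually proves is that $\|\Gk\ast\phi-\phi\|_{X_\lambda}\to 0$ whenever $\hat\phi(\cdot,\xi)$ is uniformly continuous in $x$ (with suitable domination in $\xi$); the proposition thus follows only after restricting the test space to such $\phi$, as is done in the classical Wigner-measure literature. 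The paper's one-line assertion silently makes this restriction; your proposal, by resting the general case on a false density claim, does not close as written.
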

\begin{proof}
Given the form of $G^k$ in~\eqref{eqn:Gk}, for any $\phi\in X_\lambda$, as $k\to\infty$:
\[
G^k*\phi(x,v)\longrightarrow \phi(x,v)\quad \text{in}\quad X_\lambda\,.
\]
Thus,
\[
\begin{aligned}
\lim_{k\rightarrow\infty}\int_{\mathbb{R}^3\times\mathbb{R}^3}\left(\fk \ast \Gk (x,v)\right)\phi(x,v)\rmd x\, \rmd v
=&\lim_{k\rightarrow\infty}\int_{\mathbb{R}^3\times\mathbb{R}^3}\fk(x,v)\left(\Gk\ast\phi(x,v)\right)\rmd x\, \rmd v\\
=&\lim_{k\rightarrow\infty}\int_{\mathbb{R}^3\times\mathbb{R}^3}f^k(x,v)\phi(x,v)\rmd x\, \rmd v\\
=&\int_{\mathbb{R}^3\times\mathbb{R}^3}f(x,v)\phi(x,v)\rmd x\, \rmd v\,,
\end{aligned}
\]
where we use $\|f^k\|_{(X_\lambda)^*}$ being bounded in the second equality, and $f^k\rightarrow f$ in the weak-$\star$ sense in the last equality.
\end{proof}
\begin{remark}\label{rmk:chi_convergence} We note that the statement of the proposition indeed uses the explicit form of $\chi$ as defined in~\eqref{eqn:measure_profile}, but the use only lies in the fact that $G^k*\phi(x,v)\longrightarrow \phi(x,v)$ in the high frequency limit. Other forms of $\chi$ works equally well as long as this $G^k$ serves as a delta measure when $k\to\infty$.
\end{remark}

\begin{theorem}
Let the assumptions in Theorem~\ref{thm:formal} and  Lemma~\ref{lem:connection} hold true, then:
\[
\lim_{k\rightarrow\infty}\Hk \uk (x,v)=\lim_{k\to\infty}\fk \ast \Gk (x,v)\xrightarrow[\text{weak}-\star]{} f(x,v),
\]
in $\left(X_\lambda\right)^*$. Furthermore, if $H^ku^k$ and $f$ are continuous, then each element in $\Dc^k[n]$ has a limit in $\Dc[n]$. More specifically:
\begin{equation}\label{eqn:inverseconvergence}
(S^k_{\rH}(x;x_s,v_s)\,,\Lambda^k_n[S^k_\rH](x_r,v_r))\to (S_{\rL}(x,v;x_s,v_s)\,, \Lambda_n[S_{\rL}](x_r,v_r))
\end{equation}
where $S_{\rL}$ takes the form of~\eqref{eqn:source_limit}, and $\Lambda_n[S_{\rL}](x_r,v_r)=f(x_r,v_r)$. In particular, if $\sigma\rightarrow0$,
\begin{equation}\label{eqn:measure_outgoing_conv}
\Lambda_n[S_\rL](x_r, v_r)=f*\delta_{(\vec{0},\vec{0})}|_{\Gamma_+}=f(x_r,v_r)|_{\Gamma_+} = \delta(x-x_{r_s})\delta(v-v_{r_s})\,,
\end{equation}
with $(x_{r_s},{v}_{r_s})$ being the outgoing location and velocity when the photon particle leaves the domain, namely:
\begin{equation}\label{eqn:Liouville_out}
x_{r_s}=x(S;(x_{s},{v}_{s})),\quad {v}_{r_s}=v(S;(x_{s},{v}_{s}))\,,
\end{equation}
where $S=\sup_{s\geq 0}\left\{s\middle|x(s;(x_{s},{v}_{s}))\in {B_1}\right\}$ and $\left\{x(s;(x_{s},{v}_{s}),v(s;(x_{s},{v}_{s}))\right\}$ solves~\eqref{eqn:chareqn}.
\end{theorem}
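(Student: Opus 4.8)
The plan is to assemble the statement from results already established and then treat the two genuinely new points: upgrading the weak-$\star$ convergence of the measurement to pointwise convergence, and the explicit $\sigma\to0$ reduction. The opening display needs no new work: the identity $\Hk\uk=\fk*\Gk$ is Lemma~\ref{lem:connection} applied to $u=\uk$, and the weak-$\star$ convergence $\fk*\Gk\to f$ in $(X_\lambda)^*$ is exactly Proposition~\ref{prop:limit}, so chaining the two gives the first line.

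I would then prove the convergence of data pairs~\eqref{eqn:inverseconvergence} component by component. The source component $S^k_{\rH}\to S_{\rL}$ is to be read through the Wigner transform: the limiting source~\eqref{eqn:source_limit} is by construction the weak-$\star$ limit of the Wigner transform of the Helmholtz source, which is supplied by Theorem~\ref{thm:formal}. For the measurement component I must show that $\Lambda^k_n[S^k_\rH](x_r,v_r)=\Hk\uk(x_r,v_r)$ converges to $\Lambda_n[S_{\rL}](x_r,v_r)=f(x_r,v_r)$ at a fixed $(x_r,v_r)\in\Gamma_+$; the identity $\Lambda_n[S_\rL]=f$ reflects that the limiting measurement kernel satisfies $\Gk\to\delta$, so that on the Liouville side $Lf=f*\delta=f$. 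Since the Husimi transform $\Hk\uk=(k/2\pi)^d|\uk*\phi^k_v|^2$ is a convolution of an $L^2$ function against the smooth profile $\phi^k_v$, each $\Hk\uk$ is automatically continuous, which is what makes the pointwise evaluation meaningful.

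The main obstacle is precisely this upgrade from weak-$\star$ to pointwise convergence. Writing $\fk*\Gk(x_r,v_r)-f(x_r,v_r)=\big((\fk-f)*\Gk\big)(x_r,v_r)+\big(f*\Gk-f\big)(x_r,v_r)$, the second term vanishes by continuity of $f$ and the standard mollification estimate, since $\Gk$ is an approximate identity concentrating at the origin. The first term is delicate: it pairs $\fk-f$ against the $k$-dependent, concentrating function $\Gk(x_r-\cdot,v_r-\cdot)$, so bare weak-$\star$ convergence does not apply. To close the gap I would strengthen the continuity hypothesis on $\{\Hk\uk\}$ to uniform-in-$k$ equicontinuity together with local uniform boundedness; Arzel\`a--Ascoli then extracts a locally uniformly convergent subsequence whose limit must coincide with $f$ by the weak-$\star$ convergence already proved, and uniqueness of the limit promotes this to convergence of the full sequence, hence pointwise at $(x_r,v_r)$.

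Finally, for the $\sigma\to0$ reduction~\eqref{eqn:measure_outgoing_conv}, I would invoke~\eqref{eqn:limit_sigma} to replace $S_{\rL}$ by $\delta(x-x_s)\delta(v-v_s)$ and~\eqref{eqn:limitoff} to write the corresponding Liouville solution as the Dirac mass transported along the characteristic flow~\eqref{eqn:chareqn}. Restricting this solution to $\Gamma_+$ and reading off the exit time $S=\sup_{s\geq0}\{s\mid x(s;(x_s,v_s))\in B_1\}$ identifies the outgoing state $(x_{r_s},v_{r_s})$ in~\eqref{eqn:Liouville_out}, so that $\Lambda_n[S_\rL](x_r,v_r)|_{\Gamma_+}=\delta(x-x_{r_s})\delta(v-v_{r_s})$, completing the chain.
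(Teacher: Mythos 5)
Your proposal is correct and, in outline, it is exactly the route the paper intends: the paper offers no separate proof of this theorem, treating the first display as the concatenation of Lemma~\ref{lem:connection} with Proposition~\ref{prop:limit}, the convergence of the source component as part of Theorem~\ref{thm:formal}, and the $\sigma\to0$ statement~\eqref{eqn:measure_outgoing_conv} as a restriction to $\Gamma_+$ of the computations~\eqref{eqn:limit_sigma}--\eqref{eqn:limitoff}. Where you genuinely depart from the paper is the passage from weak-$\star$ convergence in $\left(X_\lambda\right)^\star$ to pointwise convergence of the data entries $\Hk\uk(x_r,v_r)$: the paper simply asserts this under the bare continuity hypothesis, whereas you correctly observe that continuity alone cannot deliver it (a sequence can be continuous, converge weak-$\star$ to a continuous limit, and still fail to converge at a point --- think of $\cos(kx)$ at $x=0$), and you close the gap with equicontinuity, local uniform boundedness, Arzel\`a--Ascoli, and uniqueness of the weak-$\star$ limit. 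That argument is sound and buys rigor the paper does not supply; the price is that you prove the statement under a strictly stronger hypothesis than the one stated. Be aware, though, that in the actual setting both hypotheses are idealizations: the limiting $f$ is a measure transported along characteristics --- the source~\eqref{eqn:source_limit} retains the factor $\delta(x-x_s)$ for every fixed $\sigma>0$ --- so $f$ is not continuous near its support, and your equicontinuity assumption cannot hold there either. The theorem is best read as a formal asymptotic statement, and your reconstruction is valuable precisely because it isolates the one step the paper glosses over.
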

This theorem naturally links the two inverse problems. In the $k\to\infty$ limit, the two datasets \eqref{eqn:dataset},\eqref{eqn:ldataset} are asymptotically close with $\zeta=\delta_{(\vec{0},\vec{0})}(x,v)$ in \eqref{eqn:defL}. In the limit of $k\rightarrow\infty$ and $\sigma\rightarrow0$, the dataset \eqref{eqn:dataset} is asymptotically approximately equivalent to
\begin{equation}\label{eqn:limitdataset}
\Dc^\infty[n] = \left\{((x_s,v_s),(x_r, v_r)):\,(x_s,v_s)\in\Gamma_-,\ (x_r,v_r)\,\text{from \eqref{eqn:Liouville_out}}\right\} \,.
\end{equation}

\subsection{Stability of Liouville inverse problem}\label{sec:stabilityLiouville}
In this section, we consider the stability of Liouville inverse problem. In particular, we focus on the stability of \eqref{eqn:limitdataset}. We will show that when $n$ is close enough to $1$, $D^\infty_n$ almost contains the information of the $X$-ray transforms of $n(x)$ and $\nabla_xn(x)$, while the inverse of $X$-ray transform is a well-posed inverse problem.

We first introduce the $X$-ray transform. Define
\[
TS^{d-1}=\left\{(x,{v})\,\middle| \, x\in\mathbb{R}^d,\, {v}\in\mathcal{S}^{d-1}, \,\left\langle{v},x\right\rangle=0\right\}\,.
\]
Assuming that $n(x)$ is continuous, we introduce the $X$-ray transform $P$, which maps $n(x),\nabla_xn(x)$ into functions $Pn\in C(TS^{d-1},\mathbb{R})$ and $P(\nabla_xn)\in C(TS^{d-1},\mathbb{R}^d)$, such that
\[
Pn({v},x)=\int^\infty_{-\infty}n(t{v}+x)\rmd t,\quad P(\nabla_xn)({v},x)=\int^\infty_{-\infty}\nabla_xn(t{v}+x)\rmd t.\quad
\]
To connect $D^\infty_n$ with $X$-ray transform, we define a projection map $\mathcal{P}:\partial B_1\times \mathcal{S}^{d-1}\rightarrow\mathbb{R}^d\times \mathcal{S}^{d-1}$
\[
\mathcal{P}((x,{v}))=\left(x-\left\langle x,{v}\right\rangle{v},{v}\right)
\]
that projects $x$ to the plane with normal vector ${v}$. We also define in-out map $\mathcal{L}:\Gamma_-\rightarrow\Gamma_+$ corresponding to \eqref{eqn:Liouville_out}:
\[
\mathcal{L}((x_{s},{v}_{s}))=(x_{r},{v}_{r})\,.
\]

\begin{remark}
We remark that the in-out map may not be well-defined for arbitrarily given $n$.  Suppose $n(x)\geq c_0$ for all $x\in\mathbb{R}^d$ and some $c_0>0$, then according to the conservation of Hamiltonian
\begin{equation}\label{eqn:conservH}
    H(x,v) = \frac{1}{2}|v|^2 - \frac{1}{2} n(x) = \frac{1}{2} - \frac{1}{2} = 0 \,,
\end{equation}
the velocity of the particle satisfies
\[
|v(s)| = \sqrt{n(x(s))} \geq \sqrt{c_0} >0 \,,
\]
for all time $s\geq0$. This by no means suggests the non-trapping property, but it at least ensures that the potential is not a sink. In the general case, we do assume that $n$ is non-trapping, so that any incoming particle can eventually be expelled out of the domain again, making the map $\mathcal{L}$ well-defined. Such non-trapping condition is closely related to geodesic X-ray transforms, and we list references~\cite{StUhVaZh:2019travel,ChQiUhZh:2007new,MonardStefanovUhlmann_geodesic_ray} for interested readers. In our numerical examples, we choose the media to be locally repulsive in the sense that
\begin{equation}\label{eqn:loc_repul}
    n(x) + x \cdot \nabla n(x) \geq c_1 >0 \,, \quad \forall x\in \Rb^d \,.
\end{equation}
Let $(x(s), v(s))$ be any particle trajectory that solves~\eqref{eqn:chareqn}. Given~\eqref{eqn:loc_repul}, we obtain the inequality
\begin{equation}\label{eqn:mom_ine}
    \frac{\rmd^2 }{\rmd s^2} \left( \frac{1}{2}|x(s)|^2 \right) = |v(s)|^2 + x(s) \cdot \frac{\rmd v}{\rmd s} = n(x(s)) + x(s) \cdot \nabla n(x(s)) \geq c_1 >0 \,.
\end{equation}
In the last equality, we have used~\eqref{eqn:conservH}. By making use of~\eqref{eqn:mom_ine}, the particle is non-trapped since $|x(s)| \geq s \sqrt{\frac{1}{2}c_0}$ for sufficiently large $s>0$.
\end{remark}

We note that $\mathcal{P}((x,{v}))\in TS^{d-1}$ for any $(x,{v})\in \partial B_1\times \mathcal{S}^{d-1}$, and $\mathcal{P}|_{\Gamma_-}:\Gamma_-\rightarrow\mathbb{R}^d\times \mathcal{S}^{d-1} , \mathcal{P}|_{\Gamma_+}:\Gamma_+\rightarrow\mathbb{R}^d\times \mathcal{S}^{d-1}$ are invertible. Now, we are ready to introduce the following approximation theorem~\cite[Theorem 4.1]{No:1999small}:
\begin{theorem}\label{thm:liuinverse} Assume
\[
\|\nabla n(x)\|_{L^\infty}\leq \Delta,\quad \|\|H n(x)\|_F\|_{L^\infty}\leq \Delta\,
\]
for some $\Delta>0$, then for any $({v},x)\in TS^{d-1}$, we have
\[
\left|\left(Pn({v},x),P(\nabla_xn)({v},x)\right)-\mathcal{P}|_{\Gamma_+}\circ\mathcal{L}\circ \left(\mathcal{P}|_{\Gamma_-}\right)^{-1}({v},x)\right|\leq C\Delta^2\,,
\]
where $C>0$ is a constant only depends on $d$.
\end{theorem}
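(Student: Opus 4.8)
The plan is to read the statement as a first-order perturbation estimate for the Hamiltonian flow \eqref{eqn:chareqn} around the free (straight-line) flow obtained when $\nabla n\equiv 0$, with smallness parameter $\Delta$. Fix $(v,x)\in TS^{d-1}$ and set $(x_s,v_s)=(\mathcal{P}|_{\Gamma_-})^{-1}(v,x)\in\Gamma_-$, so that $v_s=v$ and $x_s=x-\sqrt{1-|x|^2}\,v$. Let $(x(s),v(s))$ solve \eqref{eqn:chareqn} from this initial datum and let $x_0(s)=x_s+sv$ be the free trajectory. First I would record the elementary a priori bounds: since $\supp(n-1)\subset B_1$ and the domain is the unit ball, the free transit time is the chord length $S_0=2\sqrt{1-|x|^2}\le 2$; combined with $\|\nabla n\|_{L^\infty}\le\Delta$, a Grönwall estimate on \eqref{eqn:chareqn} gives that the true trajectory stays within $O(\Delta)$ of the free one, $|x(s)-x_0(s)|+|v(s)-v|\le C\Delta$ uniformly on the transit interval, that the true transit time $S$ satisfies $|S-S_0|\le C\Delta$, and that (for $\Delta$ small enough to guarantee non-trapping, cf.\ the locally repulsive condition \eqref{eqn:loc_repul}) the particle indeed exits.

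The core is then two first-order computations, one for the velocity block and one for the position block, each upgraded to $O(\Delta^2)$ accuracy using the Hessian bound. For the velocity, integrating \eqref{eqn:chareqn} gives $v_r-v_s=\tfrac12\int_0^S\nabla n(x(s))\,\rmd s$. Replacing $x(s)$ by $x_0(s)$ and extending the integral over all of $\Rb$ (legitimate since $\nabla n$ vanishes outside $B_1$) produces exactly $\tfrac12 P(\nabla n)(v,x)$; the replacement error is $O(\Delta^2)$ because $|\nabla n(x(s))-\nabla n(x_0(s))|\le \|Hn\|_{L^\infty}\,|x(s)-x_0(s)|\le C\Delta^2$, and the endpoint mismatch $|S-S_0|$ contributes at the same order since the integrand is itself $O(\Delta)$. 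Hence $v_r-v_s=\tfrac12 P(\nabla n)(v,x)+O(\Delta^2)$, matching the velocity component after subtracting the free value $v$.

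For the position block I would exploit the identity $\tfrac{\rmd}{\rmd s}\langle x(s),v(s)\rangle=|v(s)|^2+\tfrac12\langle x(s),\nabla n(x(s))\rangle=n(x(s))+\tfrac12\langle x(s),\nabla n(x(s))\rangle$, using the Hamiltonian conservation $|v(s)|^2=n(x(s))$ from \eqref{eqn:conservH}. Integrating over $[0,S]$ and again freezing the trajectory to $x_0$ at $O(\Delta^2)$ cost, an integration by parts along the chord (using $n\equiv1$ at both endpoints $x_s,x_r\in\partial B_1$) converts the $\langle x,\nabla n\rangle$-term into $\tfrac12\big(S_0-\int_0^{S_0} n(x_0)\,\rmd s\big)$, so that the longitudinal quantity $\langle x_r,v_r\rangle-\langle x_s,v_s\rangle$ reduces, modulo the purely geometric term $\tfrac12 S_0$ and a contribution expressible through $P(\nabla n)$, to $\tfrac12\int n\,\rmd s=\tfrac12\,Pn(v,x)+O(\Delta^2)$. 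The transverse shift $x_r-\langle x_r,v_r\rangle v_r$ is handled analogously by integrating $v(s)$ twice and freezing to $x_0$. Assembling the velocity deflection and the positional data, tracking the action of $\mathcal{P}|_{\Gamma_\pm}$ (the free composition $\mathcal{P}|_{\Gamma_+}\circ\mathcal{L}_{\mathrm{free}}\circ(\mathcal{P}|_{\Gamma_-})^{-1}$ is the identity on $TS^{d-1}$), and subtracting this reference yields the claimed bound with a constant $C=C(d)$ absorbing the chord-length and Grönwall constants.

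The main obstacle is the uniform second-order error control: genuinely establishing that every linearization step loses only $\Delta^2$ and that the constant is uniform over all lines $(v,x)\in TS^{d-1}$. Two points require care. First, the exit time $S=S(v,x)$ is an implicit functional of the perturbed trajectory through the exit condition $|x(S)|=1$; one must show $S-S_0=O(\Delta)$ and that the resulting perturbation of all integration limits feeds back only at order $\Delta^2$, which relies on transversality of the free trajectory at the exit point, hence on a lower bound for $\langle v,\nu(x_r)\rangle$ that degenerates for grazing rays $|x|\to1$. Second, the passage from the $O(\Delta)$ trajectory deviation to the $O(\Delta^2)$ integrand error is exactly where both hypotheses are needed together: the gradient bound controls the force and the deviation, while the Hessian bound supplies the Lipschitz modulus of $\nabla n$ that converts that deviation into a quadratic gain. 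Handling the grazing rays uniformly (or restricting to a compact subfamily of lines and arguing that the boundary-layer contribution is negligible) is the delicate part; everything else is a bounded-time Grönwall-plus-Taylor estimate.
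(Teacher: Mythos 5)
First, a point of reference: the paper never proves this theorem --- it is quoted as \cite[Theorem 4.1]{No:1999small}, so your attempt can only be judged against the standard small-amplitude scattering argument, which is indeed the Gr\"onwall-plus-Taylor scheme you outline. Your velocity block is essentially sound: integrating \eqref{eqn:chareqn} and freezing the trajectory onto the free chord gives $v_r-v_s=\tfrac12 P(\nabla_x n)(v,x)+\Oc(\Delta^2)$, with the Hessian bound converting the $\Oc(\Delta)$ trajectory deviation into an $\Oc(\Delta^2)$ integrand error. (The untracked factor $\tfrac12$, like the dimensional mismatch in the statement itself, is a defect of the paper's loose transcription of Novikov rather than of your computation.)

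The position block, however, has a genuine gap, and it is structural rather than technical. Your identity integrates to
\[
\langle x_r,v_r\rangle-\langle x_s,v_s\rangle \;=\; S+\tfrac12 P(n-1)(v,x)+\tfrac12\langle x,\,P(\nabla_x n)(v,x)\rangle+\Oc(\Delta^2)\,,
\]
where $S$ is the \emph{true} transit time. You treat $S$ as the ``purely geometric'' chord length $S_0=2\sqrt{1-|x|^2}$, but $S-S_0=\Oc(\Delta)$, which is fatal at the claimed $\Oc(\Delta^2)$ accuracy; and $S$ is not part of the data, since $\mathcal{P}|_{\Gamma_+}\circ\mathcal{L}\circ(\mathcal{P}|_{\Gamma_-})^{-1}$ records only the outgoing \emph{line} (exit direction and its impact parameter) and carries no clock. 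The only relation available to eliminate $S$ is the exit condition $|x(S)|=1$; expanding it to first order yields $S-S_0=-\tfrac{1}{S_0}\bigl\langle x,\int_0^{S_0}(S_0-\tau)\nabla n(x_0(\tau))\,\rmd\tau\bigr\rangle-\tfrac12 P(n-1)(v,x)+\Oc(\Delta^2)$, and upon substitution the $P(n-1)$ contributions cancel \emph{identically}. What survives at first order in the projected data is only $P(\nabla_x n)$ and the first-moment transform $\int t\,\nabla n(x+tv)\,\rmd t$ (transverse part) --- the same object your ``integrate $v(s)$ twice'' step produces for the transverse shift --- but no $Pn$. The reason is invariance: the projected in--out map depends only on the outgoing ray as a set, hence is blind to time delay, and $P(n-1)$ \emph{is} time-delay information. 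In Novikov's original formulation the scattering datum is the time-synchronized asymptotic shift $x_+-x_-$, whose longitudinal component equals $\tfrac12 P(n-1)+\Oc(\Delta^2)$; the projection $\mathcal{P}$ used in this paper's statement discards exactly that component. So your plan can prove the $P(\nabla_x n)$ half of the estimate, but no variant of it can extract the $Pn$ half from the data as defined here; one must either restore travel time to the data (Novikov's setting) or drop $Pn$ from the claim --- which is harmless for the paper's purposes, since knowing $P(\nabla_x n)$ for all lines already determines $n$. Your worry about grazing rays, by contrast, is real but secondary: all first-order deviations scale with the chord length $S_0$, so the apparent $1/S_0$ degeneration of the transversality constant can be absorbed.
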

According to Theorem \ref{thm:liuinverse}, if $n$
is almost a constant (close enough to $1$), then we can use the data set to recover $X$-ray transform of $n,\nabla n(x)$. Thus, we can separate \eqref{eqn:limitdataset} into two inverse problems
\[
\mathcal{D}^\infty[n]\Longrightarrow \left(Pn({v},x),P(\nabla_xn)({v},x)\right)\Longrightarrow n(x)\,,
\]
where the first one can be approximately calculated if $n$
is almost constant $1$ and the second one is the inverse of $X$-ray transform that is well-posed according to~\cite[Theorem 5.1]{Na:2001mathematics}.

\section{Numerical experiments}\label{sec:numer}

In this section we provide numerical evidence showcasing the theory developed above. In particular, we would like to demonstrate that as $k$ increases, the measurement taken on the solution to the Helmholtz equation through the Husimi transform converges to the pointwise evaluation of the solution to the Liouville equation, and that the data becomes more and more sensitive to the perturbation in media, making the inverse problem more and more stable.

We first summarize the numerical setup and unify the notations, and then present a class of numerical results.

\subsection{Numerical setup}
We set up our experiment in a two dimensional domain that takes the form of:
\begin{equation}\label{eqn:Helmholtz_numerics}
\Delta \uk + k^2 n(x) \uk = -k^{\frac{5}{2}} S_{v_\rms}(k(x-x_\rms)), \quad x \in \Rb^2 \,.
\end{equation}
The Sommerfeld radiation condition is imposed at infinity as well. The source term is given by
\begin{equation}\label{eqn:numer_source}
S_{v_\rms}(x) =
\sqrt{2} \left(\frac{\sigma}{\sqrt{\pi}}\right)^{\frac{3}{2}} \exp\left(-\sigma^2\frac{|x|^2}{2}
+ \ri v_\rms\cdot x\right)\,,
\end{equation}
for $(x_\rms,v_\rms)\in\Gamma_-$. We denote the solution to~\eqref{eqn:Helmholtz_numerics} by $\uk_{x_\rms,v_\rms}$ whenever the source center and the incident direction are relevant for the discussion. The Husimi transform defined in \eqref{eqn:Husimi} takes the form
\begin{equation}\label{eqn:husimi_receive_num}
\Hk \uk(x_\rmr,v_\rmr) = \left( \frac{k}{2\pi}\right)^d \left|\uk*\phi_{v_\rmr}^k(x_\rmr)\right|^2\,,
\end{equation}
with $(x_\rmr,v_\rmr)\in\Gamma_+$\,. We let the refractive index $n(x)$ set to be $n(x) = 1 + q(x)$ with the support of the heterogeneity $q(x)\subset B(r)$. The measurement is taken on $\partial B(R)$ with $R>r$. See Figure~\ref{fig:setup_computation} for an illustration of the configuration.

Computationally we set the domain $D = \left[-L/2,L/2\right]^2$, with $L$ significantly bigger than $R$, and choose the spatial mesh size $h = 1/N$ with $N$ being an even integer. For simplicity of representation, we use the angles ${\theta}_\rms$ and ${\theta}_\rmr$ to denote the center of the sources and the center of the receivers, respectively, and the angles ${\theta}_\rmi$ and ${\theta}_\rmo$ are used to denote the incident and outgoing direction of the sources and receivers, respectively, so that:
\begin{equation}
\begin{aligned}
&x_\rms = (R\cos{\theta}_\rms,R\sin{\theta}_\rms)\,,\\
&v_\rms = (-\cos({\theta}_\rms+{\theta}_\rmi),-\sin({\theta}_\rms+{\theta}_\rmi)) \,,
\end{aligned}
\end{equation}
and
\begin{equation}
\begin{aligned}
&x_\rmr = (R\cos({\theta}_\rms+{\theta}_\rmr),R\sin({\theta}_\rms+{\theta}_\rmr)) \\
&v_\rmr = (\cos({\theta}_\rms+{\theta}_\rmr+{\theta}_\rmo),\sin({\theta}_\rms+{\theta}_\rmr+{\theta}_\rmo)) \,.
\end{aligned}
\end{equation}
The angles $\theta_\rmi$ and $\theta_\rmo $ take values in $[0,2\pi)$, whereas the angles $\theta_\rmi$ and $\theta_\rmo$ take values in $(-\frac{\pi}{2},\frac{\pi}{2})$. An illustration of the angles can be found in Figure~\ref{fig:setup_computation}. Since the mapping between $({\theta}_\rms, {\theta}_\rmi, {\theta}_\rms, {\theta}_\rmo)$ and the corresponding $(x_\rms, v_\rms,x_\rmr, v_\rmr)$ is one-to-one, we present the quantities $u^k$ and $H^ku^k$ on the $\theta$ coordinate system whenever there is no confusion.

The angles are discretized with step size $\Delta{\theta}$ and the angular grids are denoted by ${\theta}_\rms^j\,, {\theta}_\rmr^j = j\Delta {\theta}$ for all $j = 0,\cdots,2\pi/\Delta {\theta} -1$, and
${\theta}_\rmi^j\,, {\theta}_\rmo^j = -\frac{\pi}{2} + j\Delta {\theta}$ for all $j = 1,\cdots, \pi/\Delta {\theta} - 1$.
\begin{figure}[htbp]
  \centering
  \includegraphics[width=0.3\textwidth]{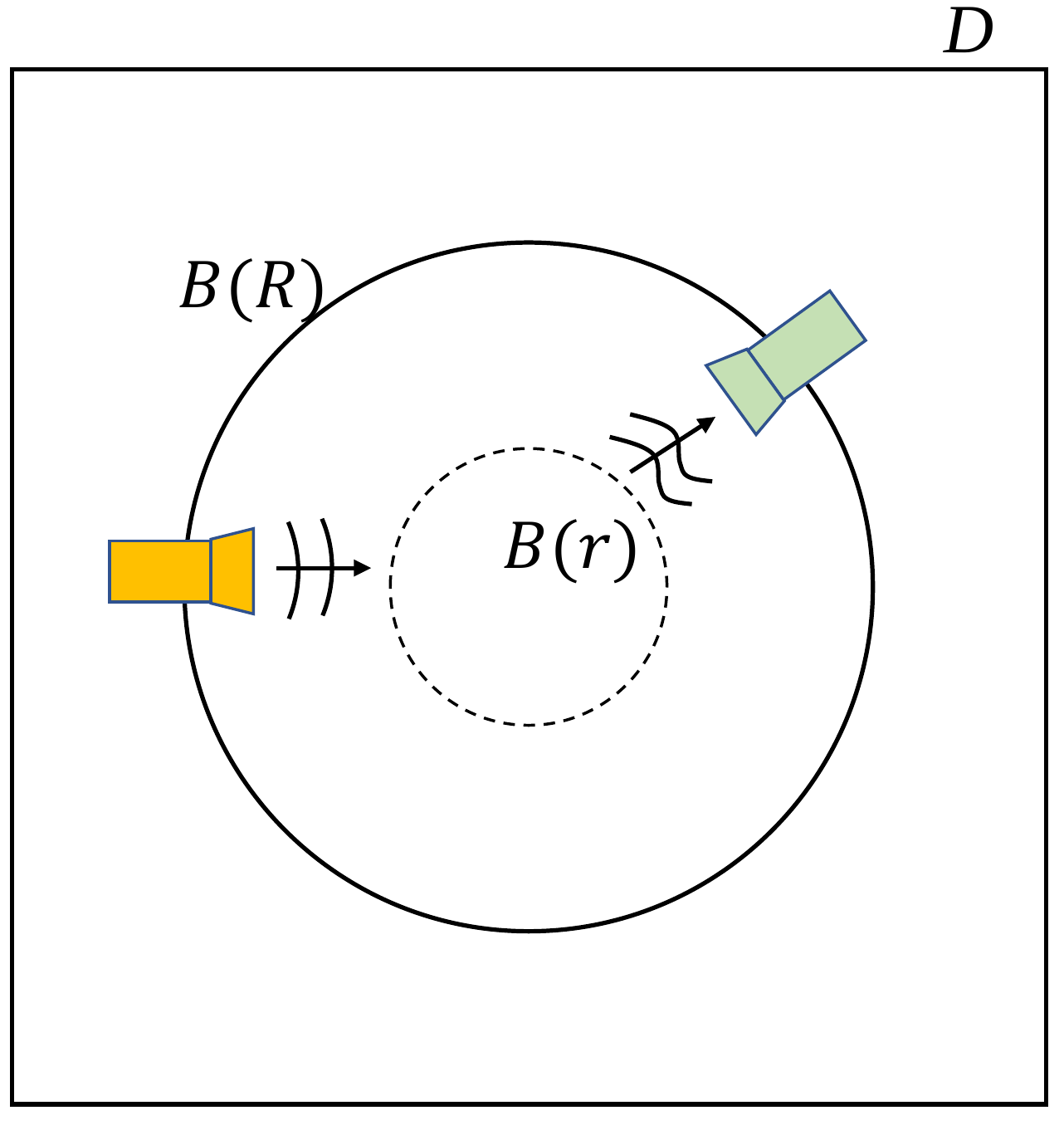}\hspace{2.5cm}
  \includegraphics[width=0.3\textwidth]{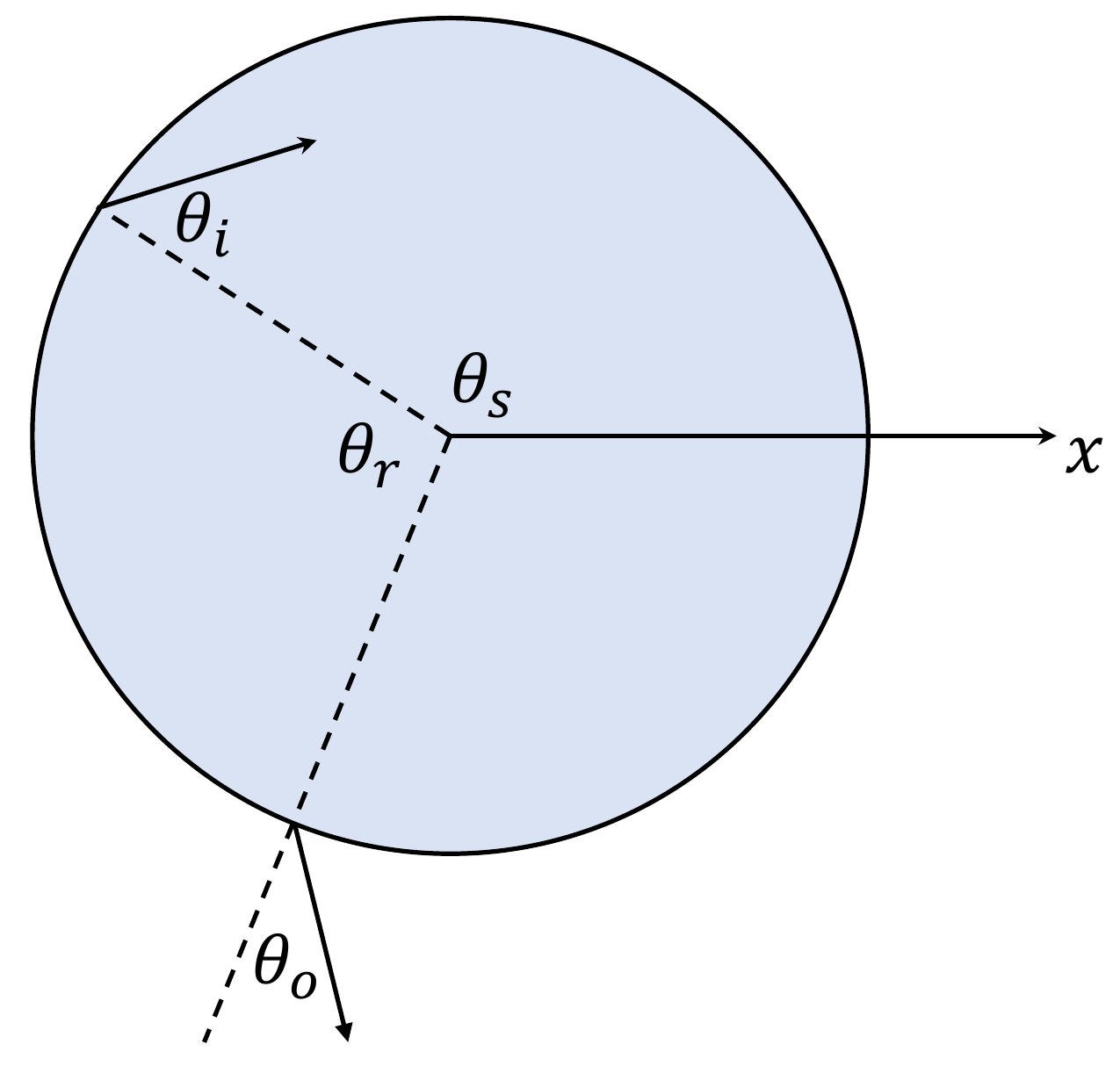}
  \caption{ (left) illustration of the setup for numerical experiments, (right) sketch of the definition of the angles on the circle $\partial B(R)$ used to parameterize the data.}
  \label{fig:setup_computation}
\end{figure}

To compare the Husimi transform of the solutions, we further define two quantities. The first quantity is the Husimi transform integrated in the outgoing direction
\begin{equation}\label{eqn:Hu_o_averaged}
M^k_\rmo(x_\rms,{v}_\rms,{x}_\rmr) := \int_{\Sb_{x_\rmr}^+} H^k u^k_{x_\rms,v_\rms}(x_\rmr,v_\rmr) \rmd v_\rmr = \int_{-\pi/2}^{\pi/2} H^k \uk_{\theta_\rms,{\theta}_\rmi} ({\theta}_\rmr,{\theta}_\rmo) \rmd {\theta}_\rmo \,,
\end{equation}
where $\Sb_{x_\rmr}^\pm = \{v\in\Sb^1: \pm\nu(x_\rmr)\cdot v>0\}$ and $\nu(x)$ is the unit outer normal vector at $x\in\partial\Omega$. Similarly, we also define the Husimi transform integrated along the outgoing boundary
\begin{equation}\label{eqn:Hu_r_averaged}
M^k_\rmr(x_\rms,{v}_\rmi,{v}_{\rmr}) := \int_{\partial\Omega_{v_\rmr}^+} H^k u^k_{x_\rms,v_\rmi}(x_\rmr,v_\rmr) \rmd x_\rmr
 =  \int_{(-\pi/2+{\theta}_{\rmo\rmr},\pi/2+{\theta}_{\rmo\rmr})} H^k \uk_{\theta_\rms,{\theta}_\rmi} ({\theta}_\rmr,{\theta}_{\rmo\rmr}-{\theta}_{\rmr}) \rmd {\theta}_\rmr \,,
\end{equation}
where we denote ${\theta}_{\rmo\rmr} = {\theta}_\rmo + {\theta}_\rmr\in[0,2\pi)$, and define $\partial\Omega_{v_\rmr}^\pm = \{x\in\partial\Omega: \pm\nu(x)\cdot v_\rmr>0\}$.

To solve the Helmholtz equation~\eqref{eqn:Helmholtz_numerics}, we use the truncated kernel method~\cite{ViGeFe:2016fast}, and solve for the Lippmann-Schwinger equation to obtain the scattered field $\usk$. This allows us to push for high-frequency without suffering from the numerical pollution that Finite Differences or Finite Elements methods often have. The scattered field is then combined with the incident field $\uik$ to yield $\uk$.

 \subsection{Numerical examples}
In the first example, we set $L = 1$, $R = 0.3$ and $r = 0.25$. For the medium, we set the heterogeneity to be the radially symmetric smooth function
\begin{equation}\label{eqn:medium_ex1}
q(x) =
\begin{cases}
A\exp\left( -\frac{1}{1-|x|^2/r^2} \right)\,, \quad |x|<r \,,\\
0\,, \quad \text{otherwise} \,.
\end{cases}
\end{equation}
Clearly, the support of $q(x)$ is contained in $B(r)$; see Figure~\ref{fig:medium_source_ex1}. We note that with $-1<A\leq 0$, the media is locally repulsive, and the incident wave is guaranteed to be expelled out of the domain.
\begin{figure}[htbp]
  \centering
  \includegraphics[width=0.4\textwidth]{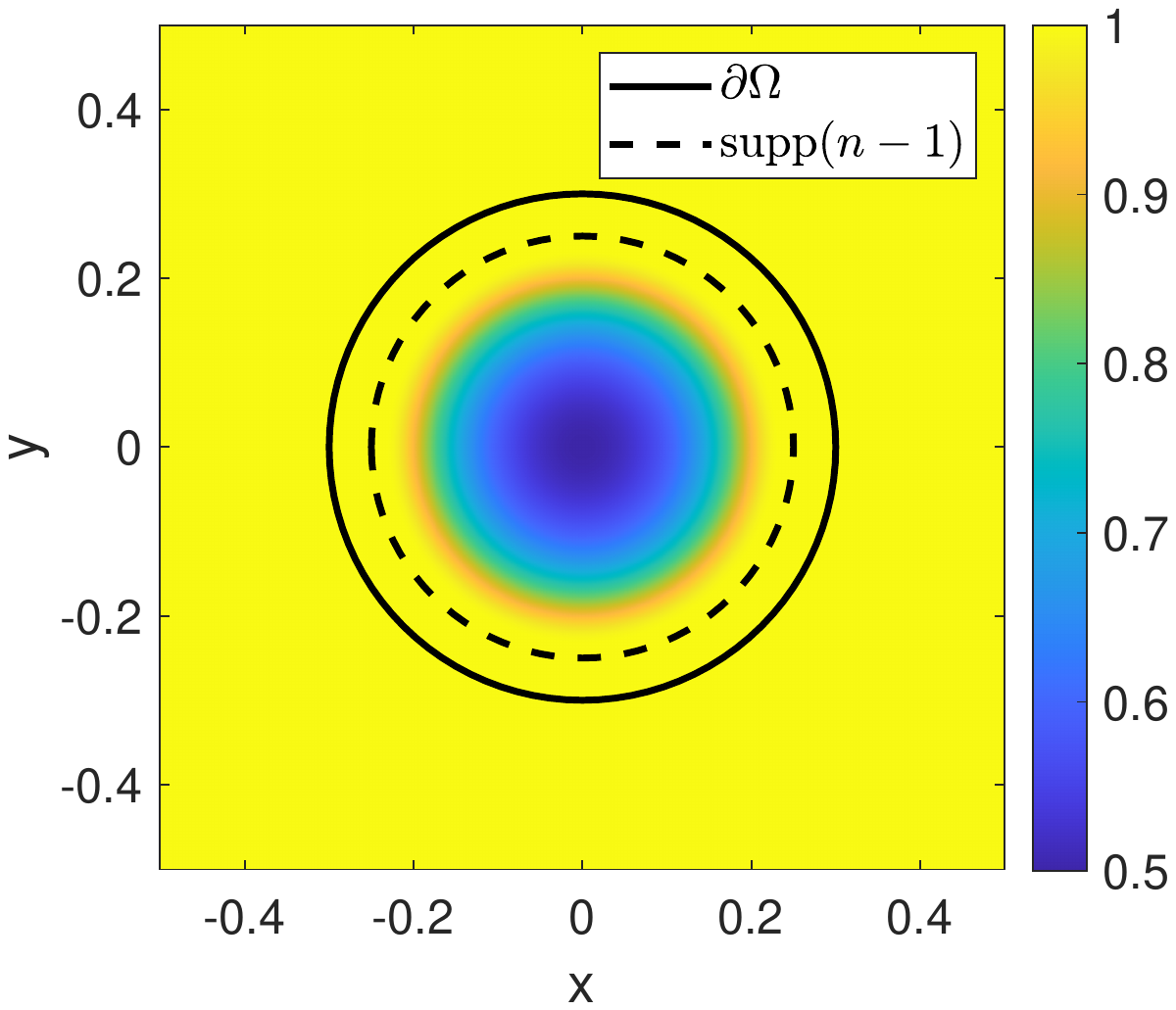}
  \includegraphics[width=0.4\textwidth]{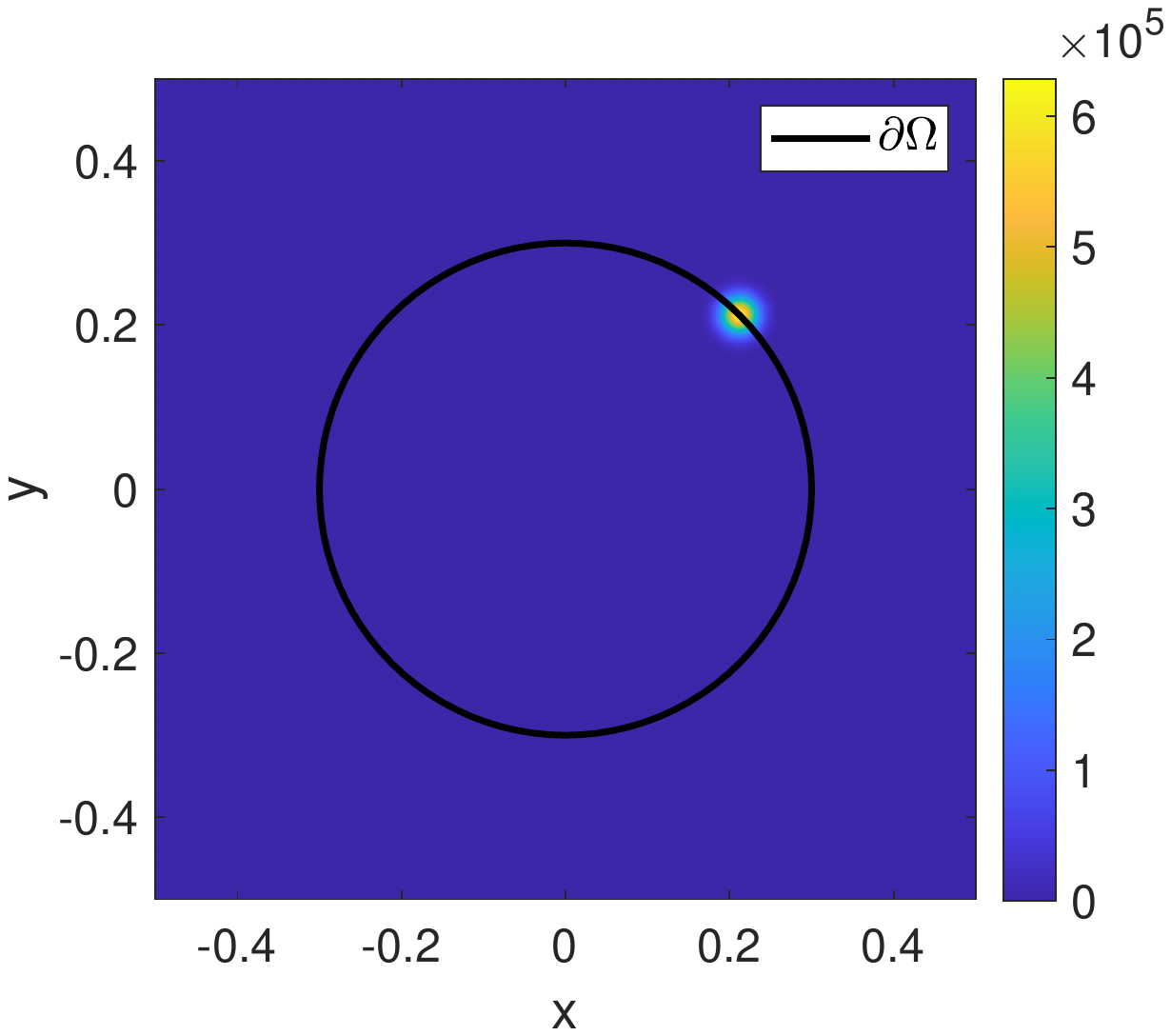}
  \caption{The left plot illustrates the medium $n(x) = 1 + q(x)$ in~\eqref{eqn:medium_ex1} with $A = -0.5$. The right plot shows the amplitude of source $|S_{v_s}(k(x-x_s))|$ with $k=2^{11}$, $\sigma=2^{-5}$ and $\theta_\rms=\pi/4$.}
  \label{fig:medium_source_ex1}
\end{figure}
For the source term, we fix $\sigma = 2^{-5}$ in the following experiments. Noting that the medium $n(x)$ is radially symmetric, one can study the scattered data for a fixed source location. We choose $\theta_\rms = \pi/4$; see Figure~\ref{fig:medium_source_ex1}.
For discretization, we choose spatial step size $h = 1/(2k)$ in the truncated kernel solver, and $\Delta\theta = \pi/30$ for the angular grids.

We first show the solution's behavior as $k$ increases in Figure~\ref{fig:Reu}. As $k$ increases, the solution converges to a narrow beam that follows the characteristic equation~\eqref{eqn:chareqn}.
\begin{figure}[htbp]
  \centering
  \includegraphics[width=0.3\textwidth]{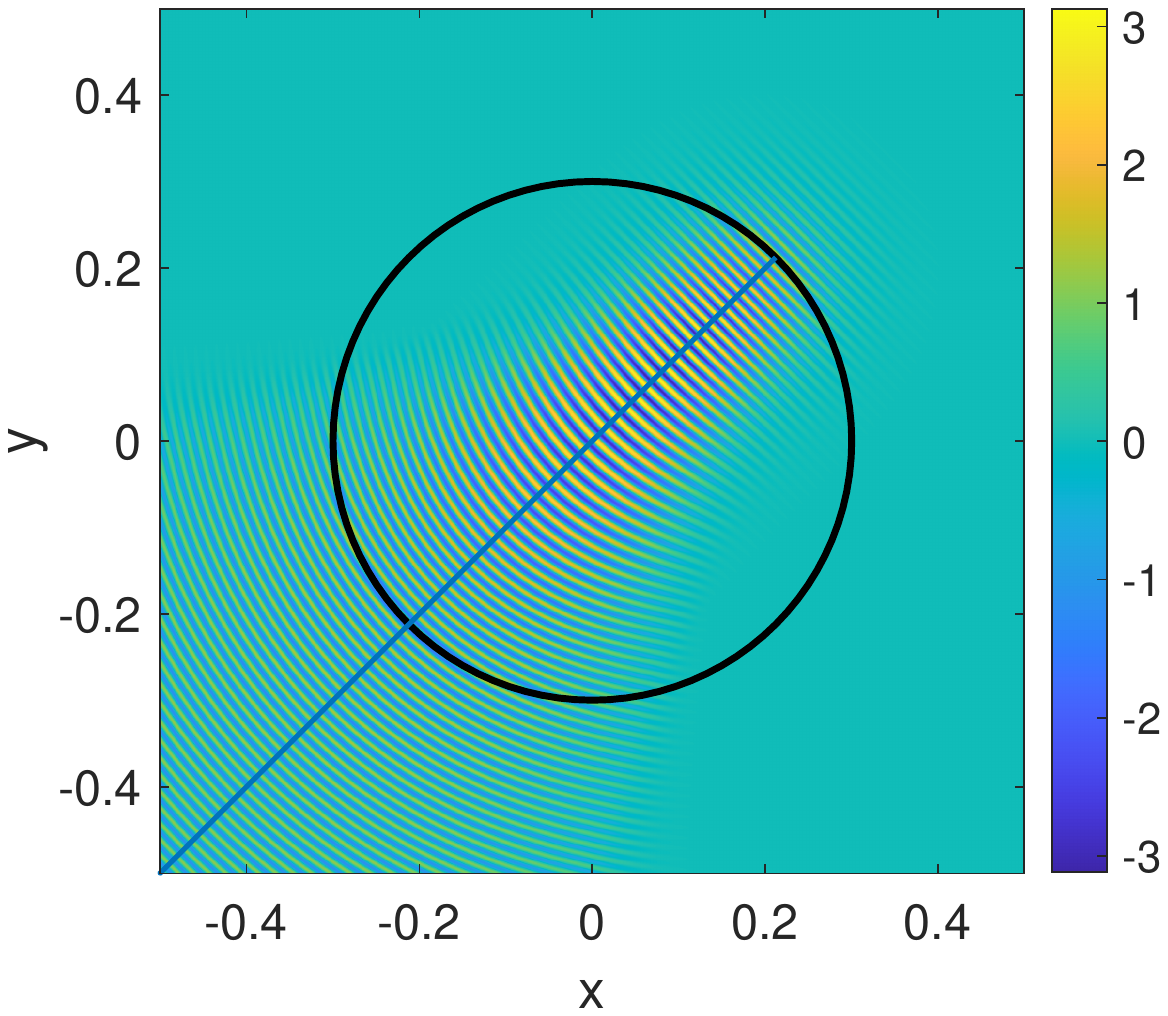}
  \includegraphics[width=0.3\textwidth]{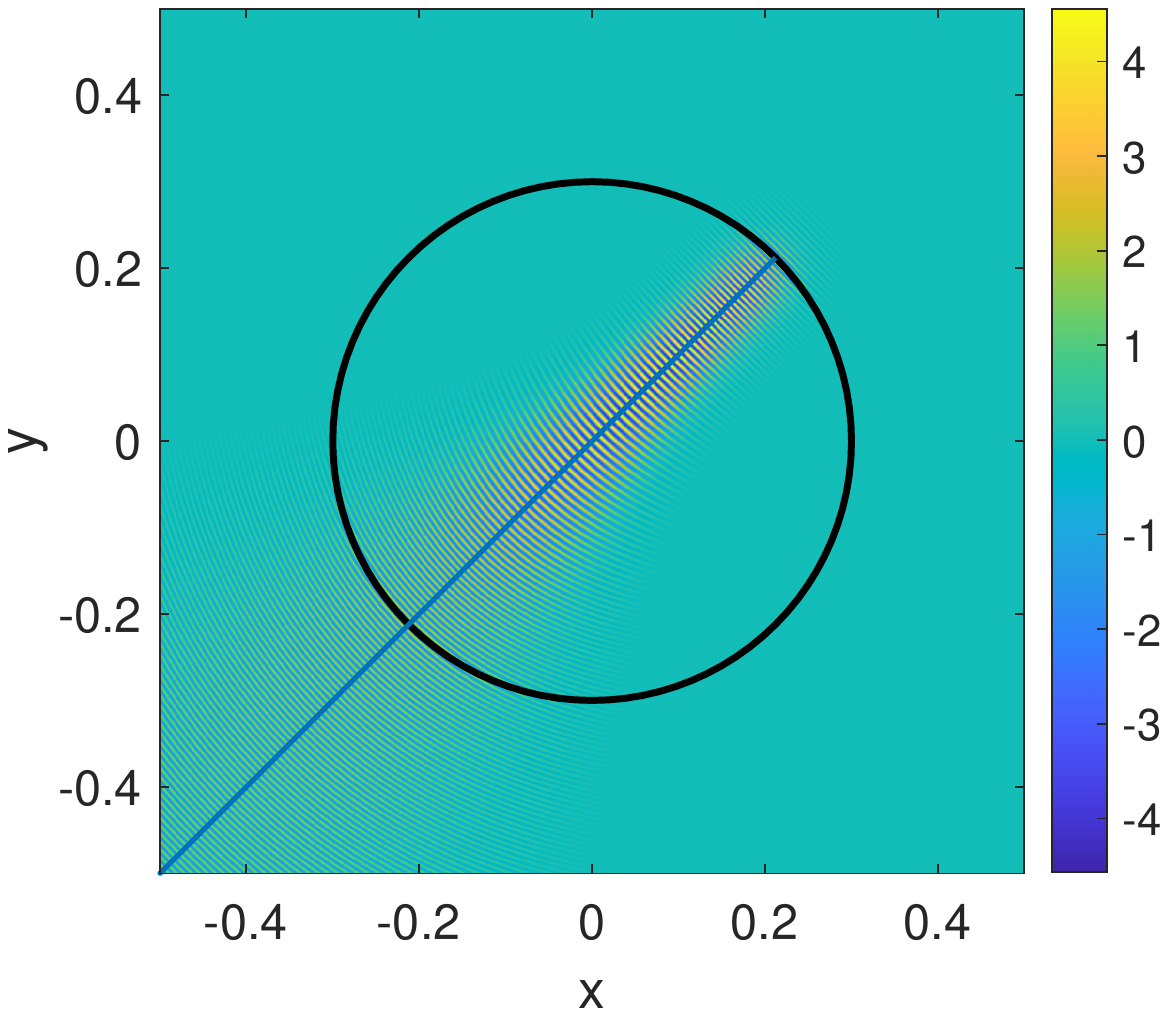}
  \includegraphics[width=0.3\textwidth]{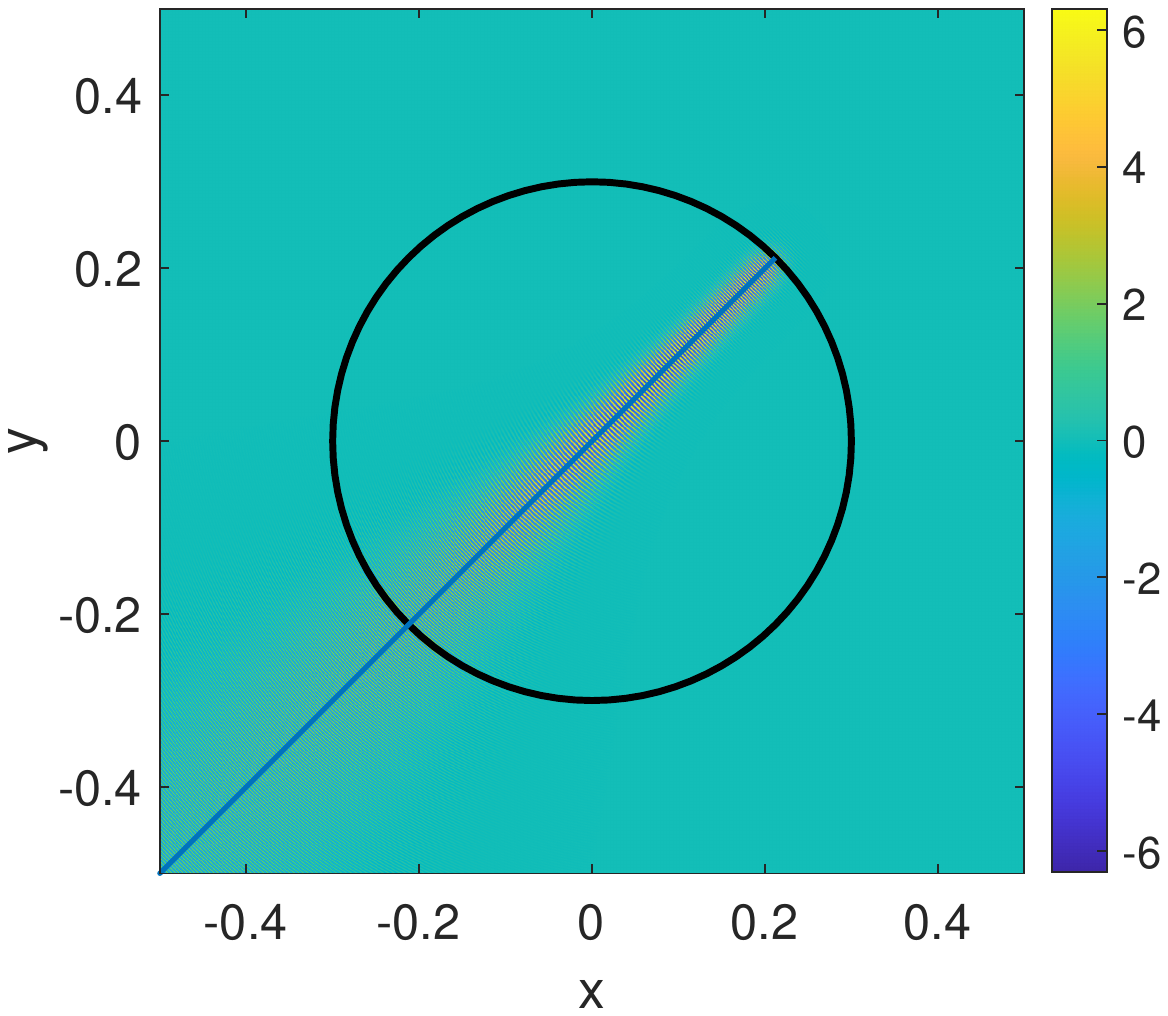}
  \\
  \includegraphics[width=0.3\textwidth]{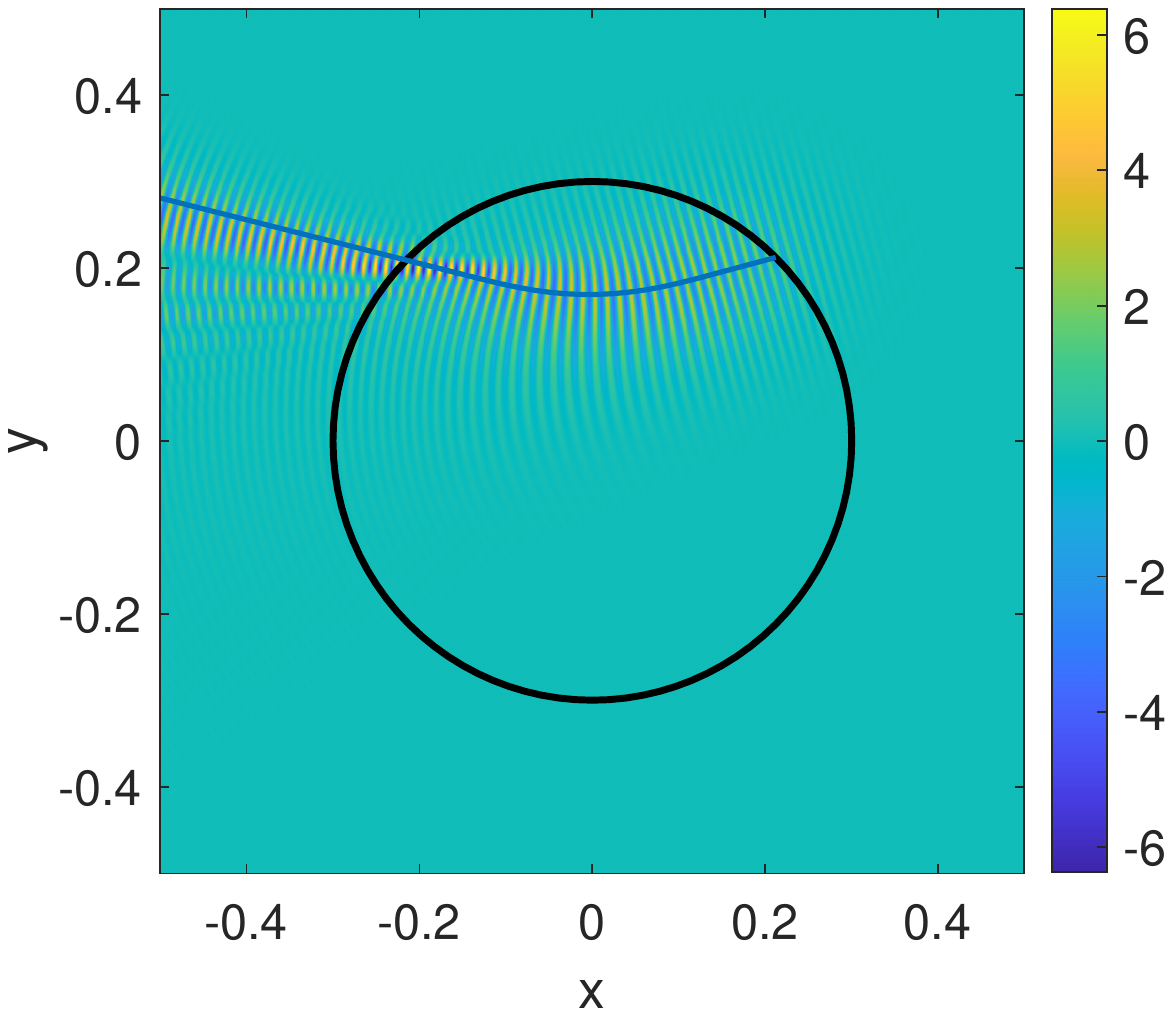}
  \includegraphics[width=0.3\textwidth]{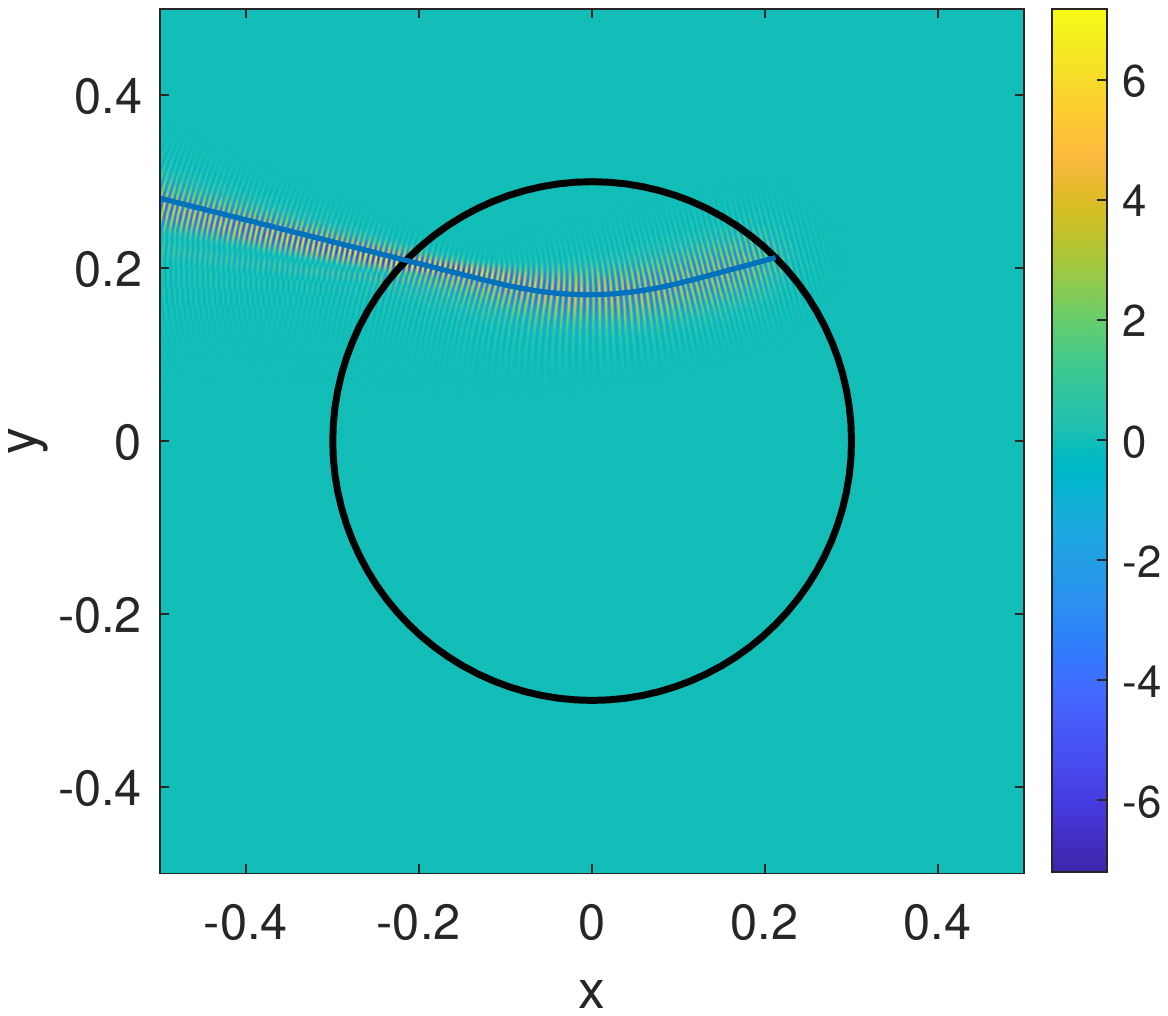}
  \includegraphics[width=0.3\textwidth]{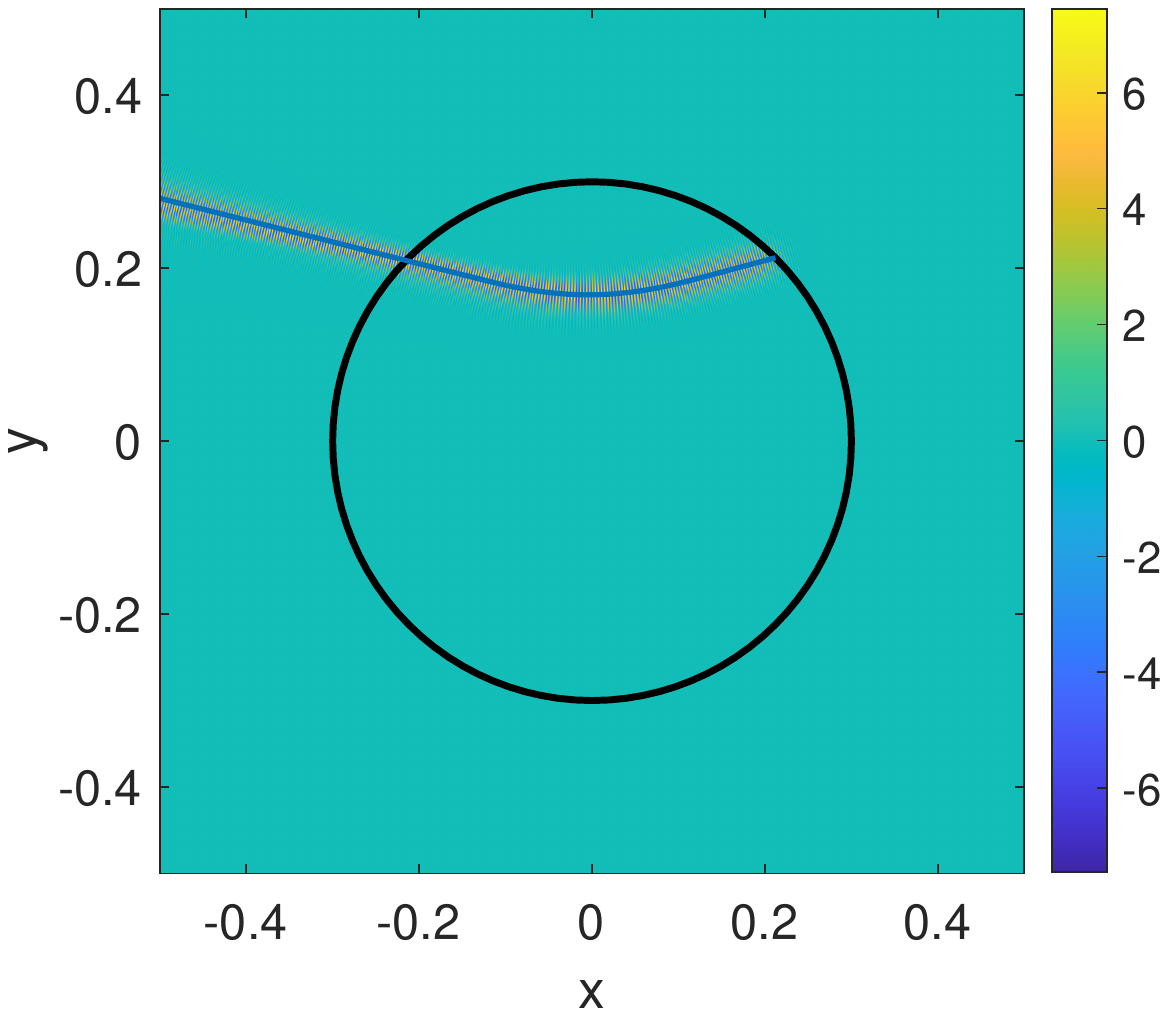}
  \caption{The real part of $\uk$ for $k=2^9$ (left), $k=2^{10}$ (middle) and $k=2^{11}$ (right). The blue lines show the Liouville trajectory that solves~\eqref{eqn:chareqn}. The medium~\eqref{eqn:medium_ex1} has amplitude $A = -0.5$. The incident direction $\theta_\rmi = 0$ (upper) and $\theta_\rmi = -\pi/6$ (lower).}
  \label{fig:Reu}
\end{figure}

We compute the Husimi transform $H^k\uk$ for different $k$ and we compare them with the trajectories of the Liouville equation. The results are shown in Figure~\ref{fig:Hu_ex1}, where we can observe that for a fixed ${\theta}_\rmi$, $H^k\uk$ converges to a delta function on the ${\theta}_\rmr$-${\theta}_\rmo$ plane, as $k$ increases. This agrees with the statement in Theorem~\ref{thm:formal}, especially equation~\eqref{eqn:measure_outgoing_conv}.

We then compare the integrated Husimi transform defined in~\eqref{eqn:Hu_o_averaged} and~\eqref{eqn:Hu_r_averaged}. In Figure~\ref{fig:Hu_o} and Figure~\ref{fig:Hu_r}, we demonstrate the convergence of $M^k_\rmo$ and $M^k_\rmr$ as $k$ increases.

As $k$ increases, the outgoing data becomes more and more sparse, and fewer and fewer detectors can receive outgoing light, leading to the sparser matrix presentation of $\Lambda_n^k$ (see definition in~\eqref{eqn:forwardmap}. This is shown in Figure~\ref{fig:sparsity} for different $k$.
\begin{figure}[htbp]
  \centering
  \includegraphics[width=0.3\textwidth]{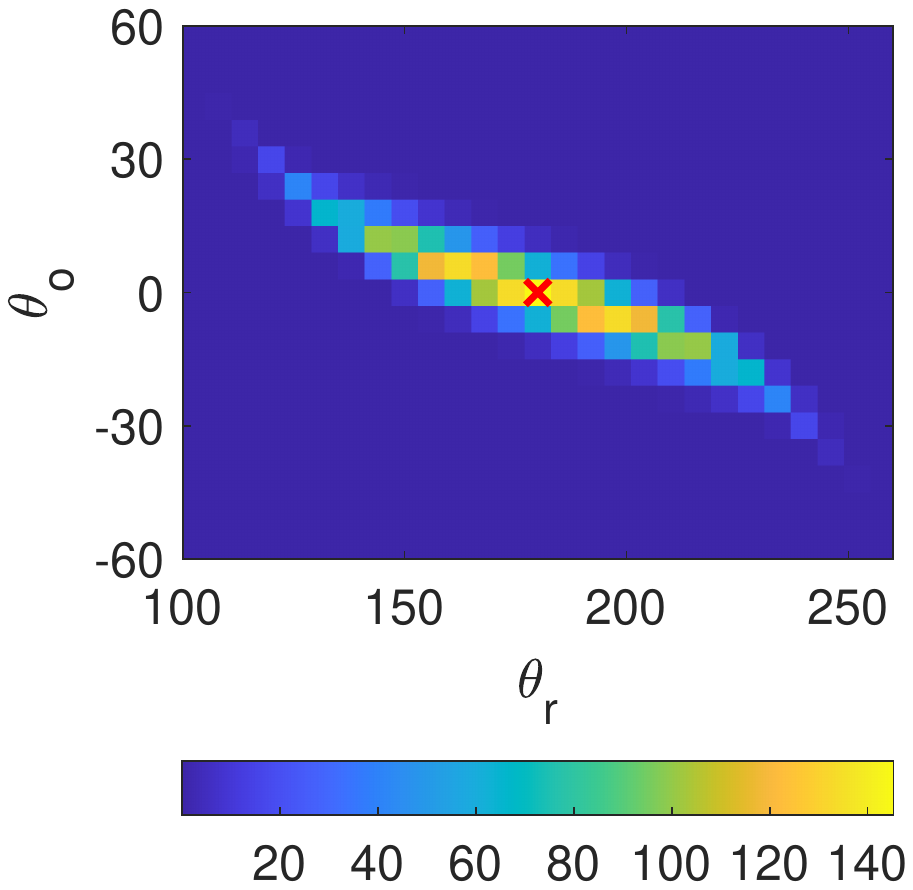}
  \includegraphics[width=0.3\textwidth]{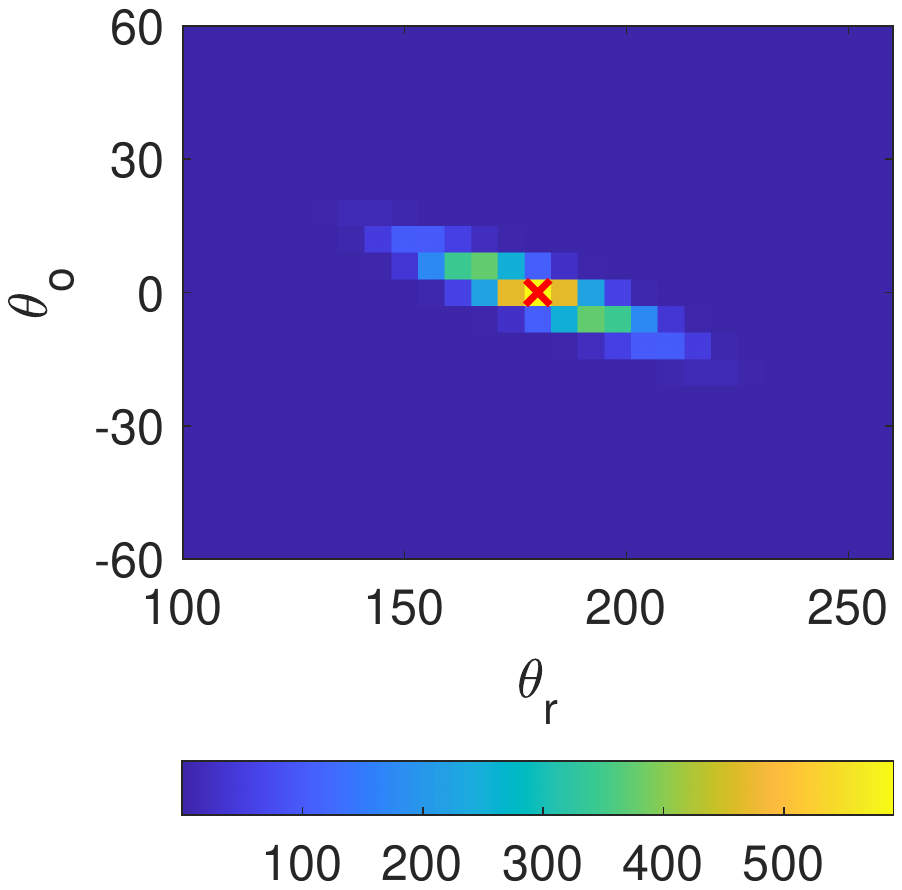}
  \includegraphics[width=0.3\textwidth]{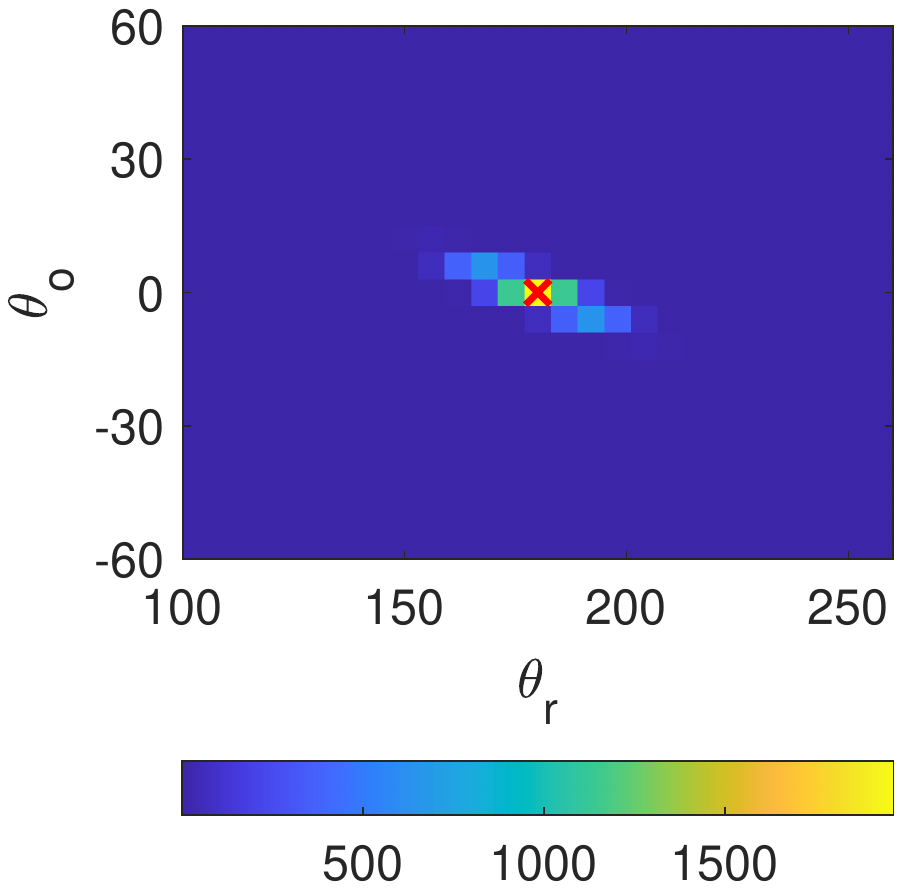}
  \\
  \includegraphics[width=0.3\textwidth]{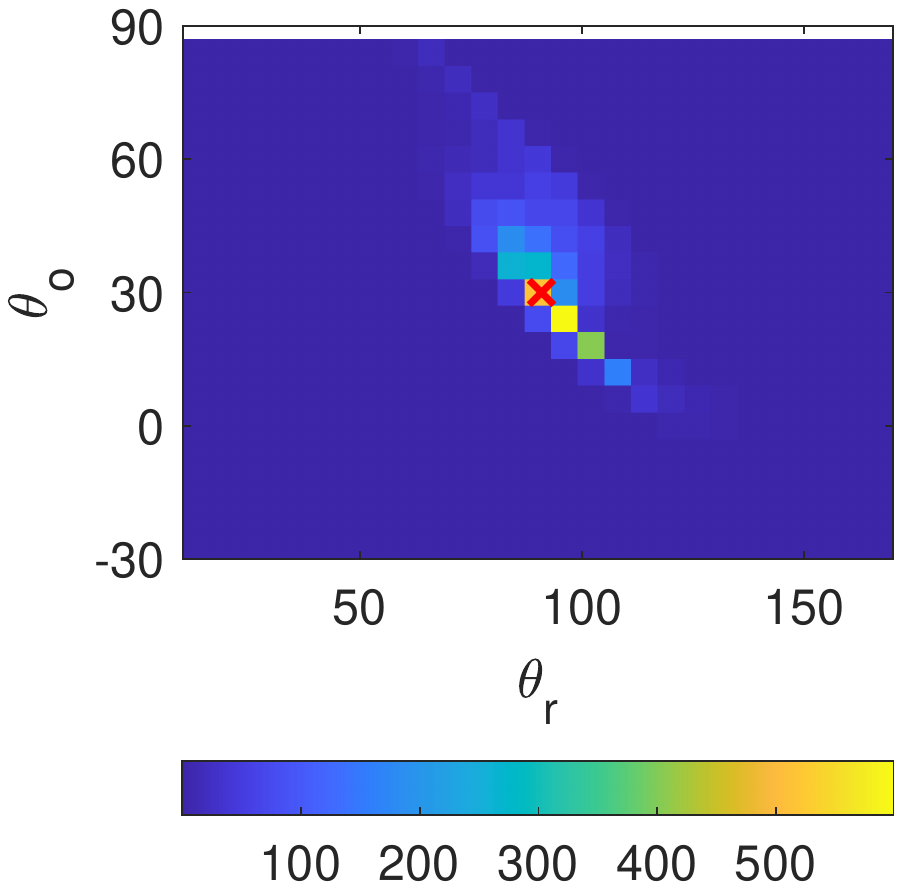}
  \includegraphics[width=0.3\textwidth]{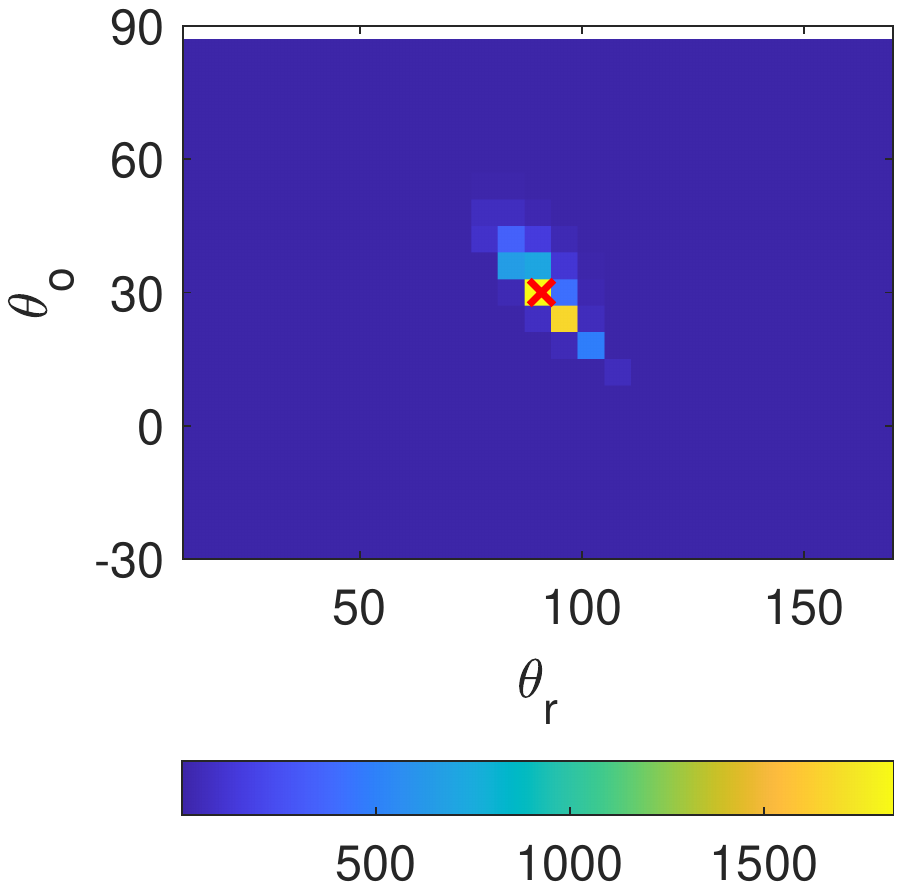}
  \includegraphics[width=0.3\textwidth]{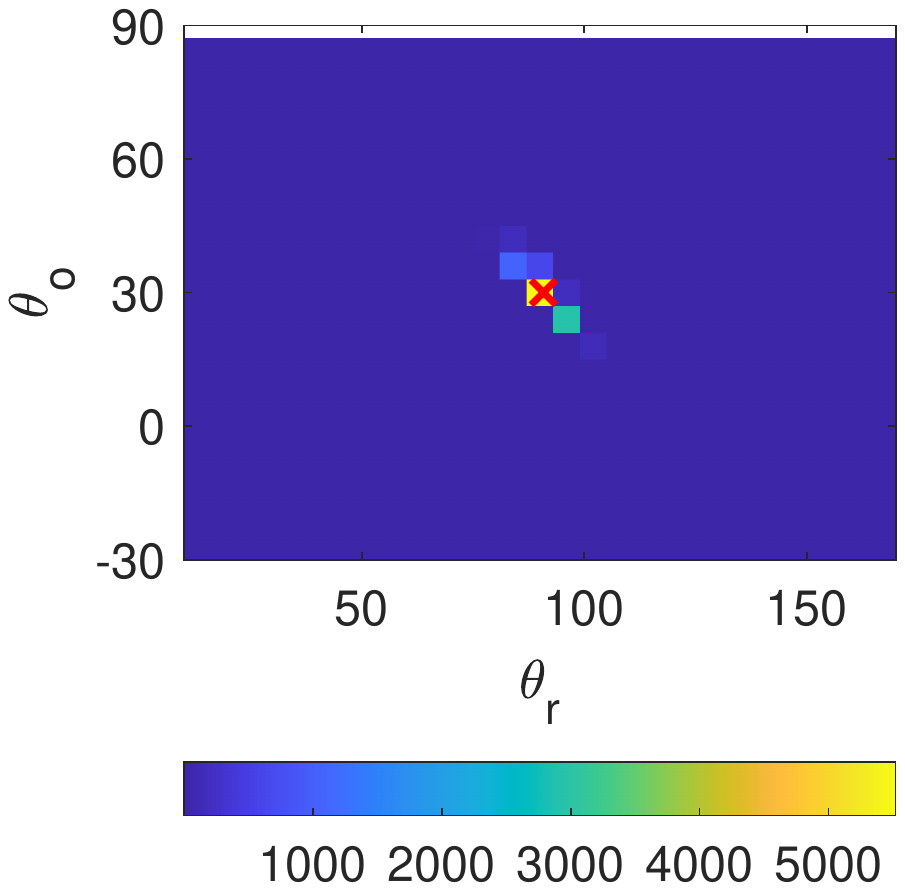}

  \caption{The Husimi transform $H^k\uk$ for $k=2^9$ (left), $k=2^{10}$ (middle) and $k=2^{11}$ (right). The upper row shows the results with ${\theta}_\rmi = 0$, and the lower row shows the results with ${\theta}_\rmi = -\pi/6$. The red crosses show the outgoing position and direction~\eqref{eqn:Liouville_out} of the Liouville trajectory. The medium~\eqref{eqn:medium_ex1} has amplitude $A = -0.5$.
  }
  \label{fig:Hu_ex1}
\end{figure}

\begin{figure}[htbp]
  \centering
  \includegraphics[width=0.4\textwidth]{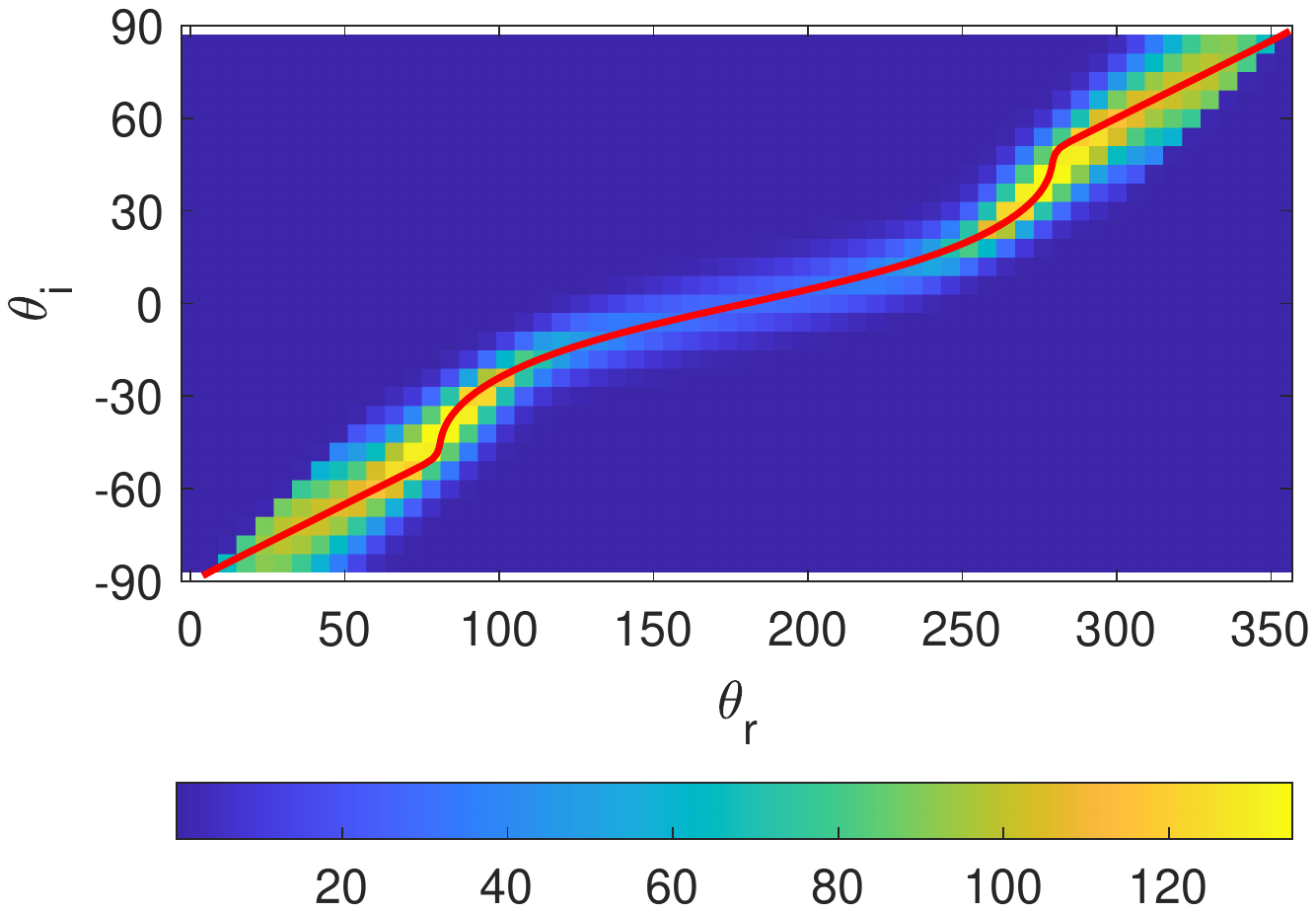}
  \includegraphics[width=0.4\textwidth]{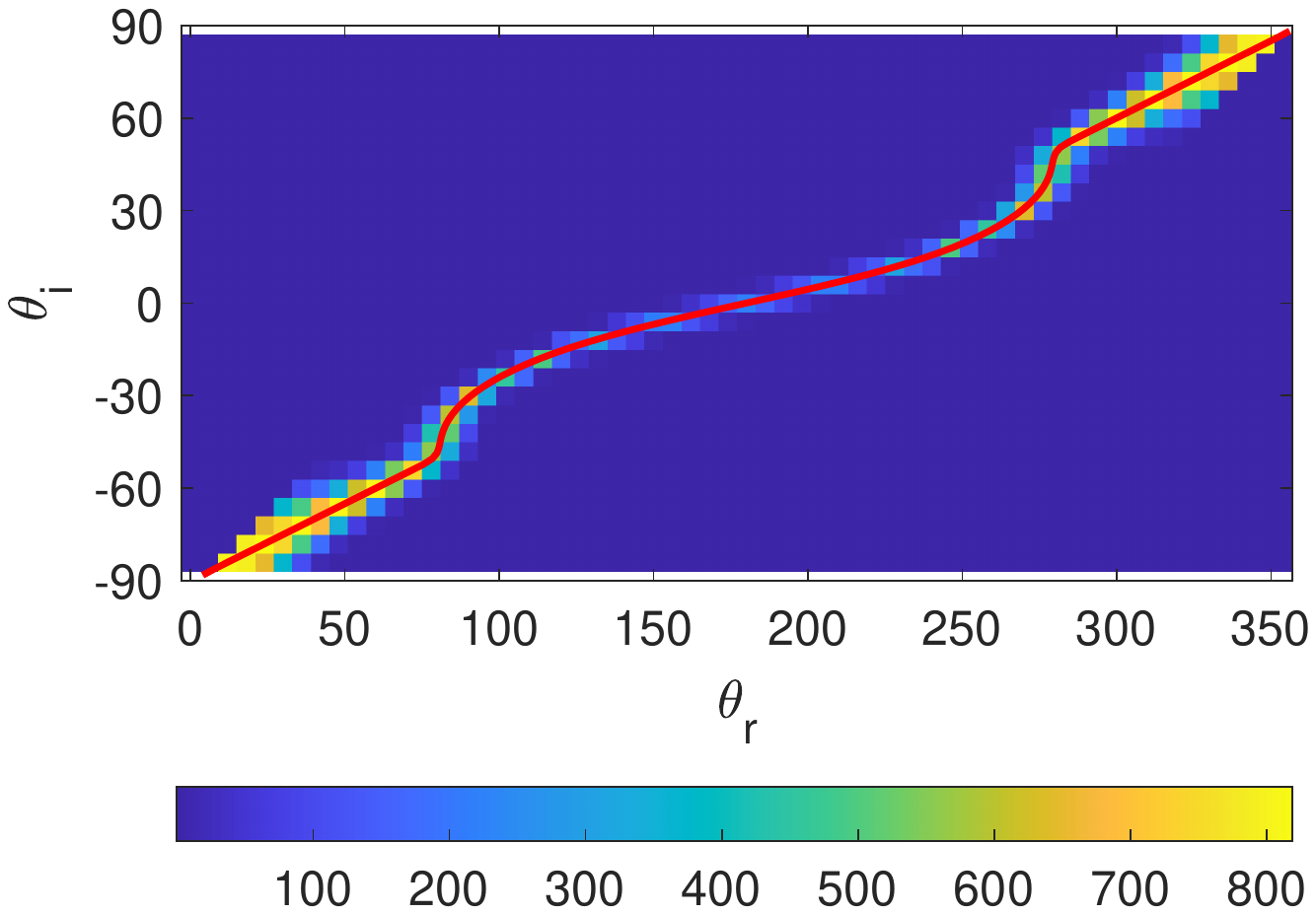}
  \caption{The averaged Husimi transform $M^k_\rmo$ for $k = 2^9$ (left) and $k = 2^{11}$ (right). The red lines show the outgoing position~\eqref{eqn:Liouville_out} of the Liouville trajectory. The medium~\eqref{eqn:medium_ex1} has amplitude $A = -0.5$.}
  \label{fig:Hu_o}
\end{figure}

\begin{figure}[htbp]
  \centering
  \includegraphics[width=0.4\textwidth]{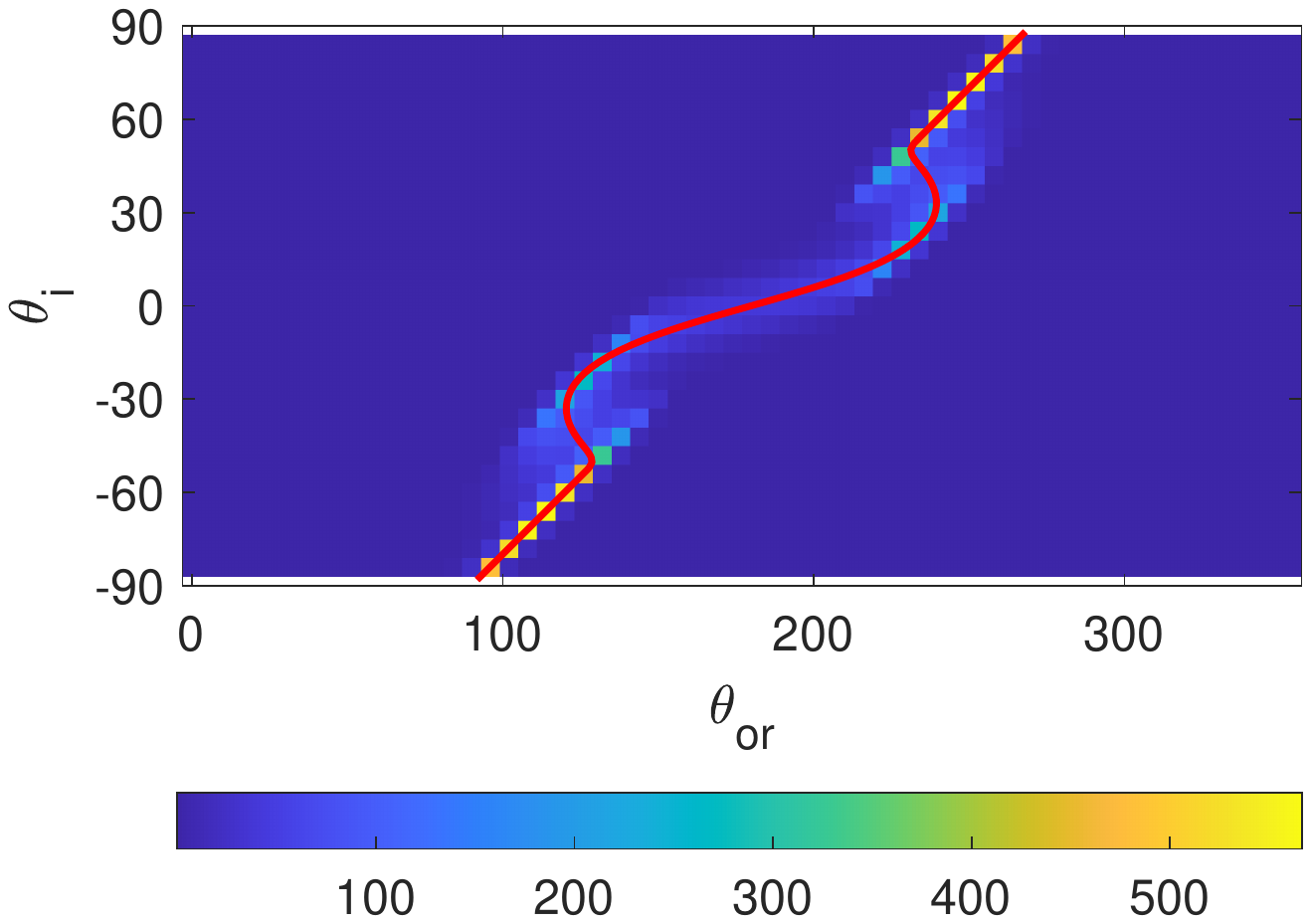}
  \includegraphics[width=0.4\textwidth]{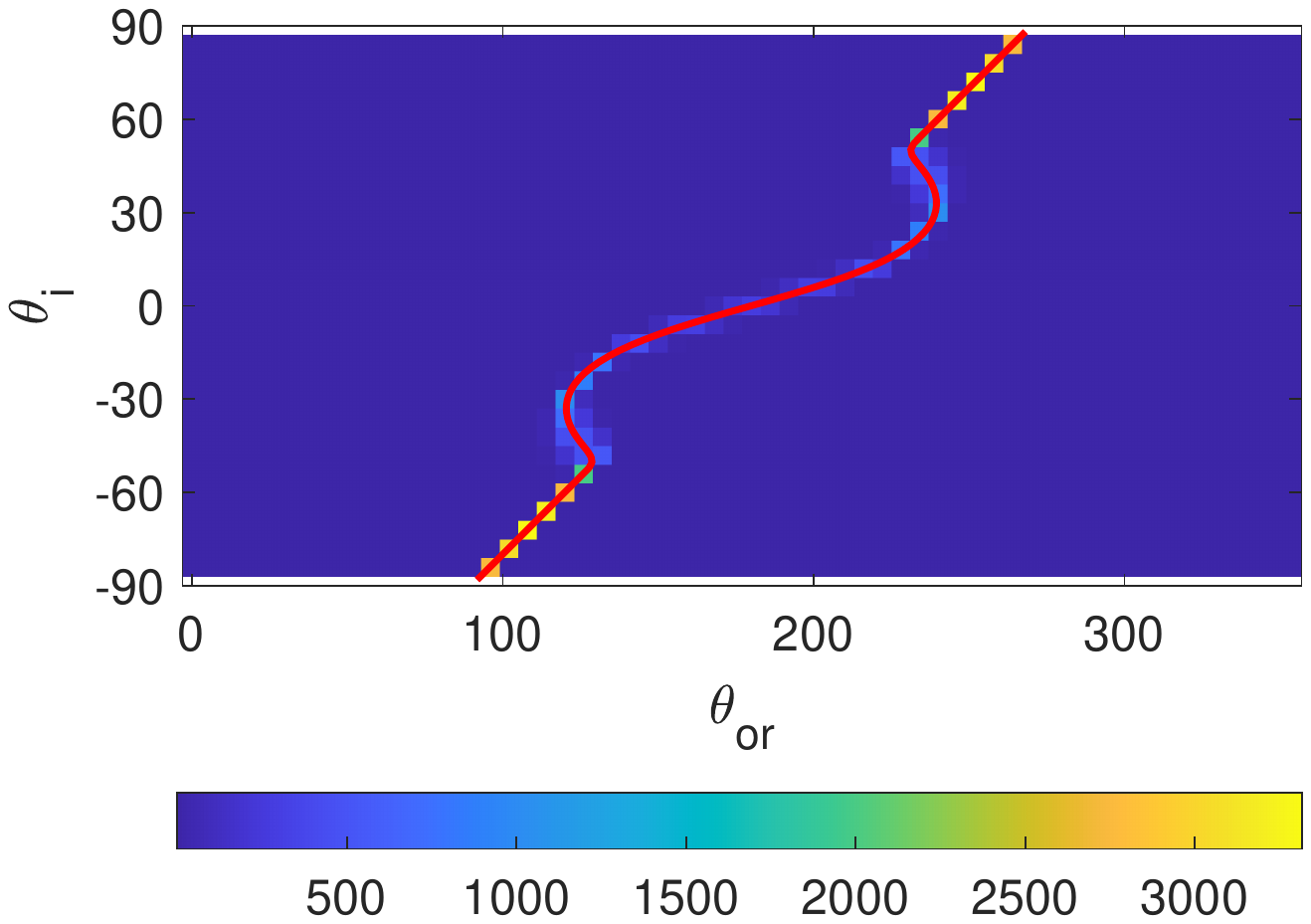}
  \caption{The averaged Husimi transform $M^k_\rmr$ for $k = 2^9$ (left) and $k = 2^{11}$ (right). The red lines show the outgoing direction~\eqref{eqn:Liouville_out} of the Liouville trajectory. The medium~\eqref{eqn:medium_ex1} has amplitude $A = -0.5$.}
  \label{fig:Hu_r}
\end{figure}

\begin{figure}[htbp]
  \centering
  \includegraphics[width=0.4\textwidth]{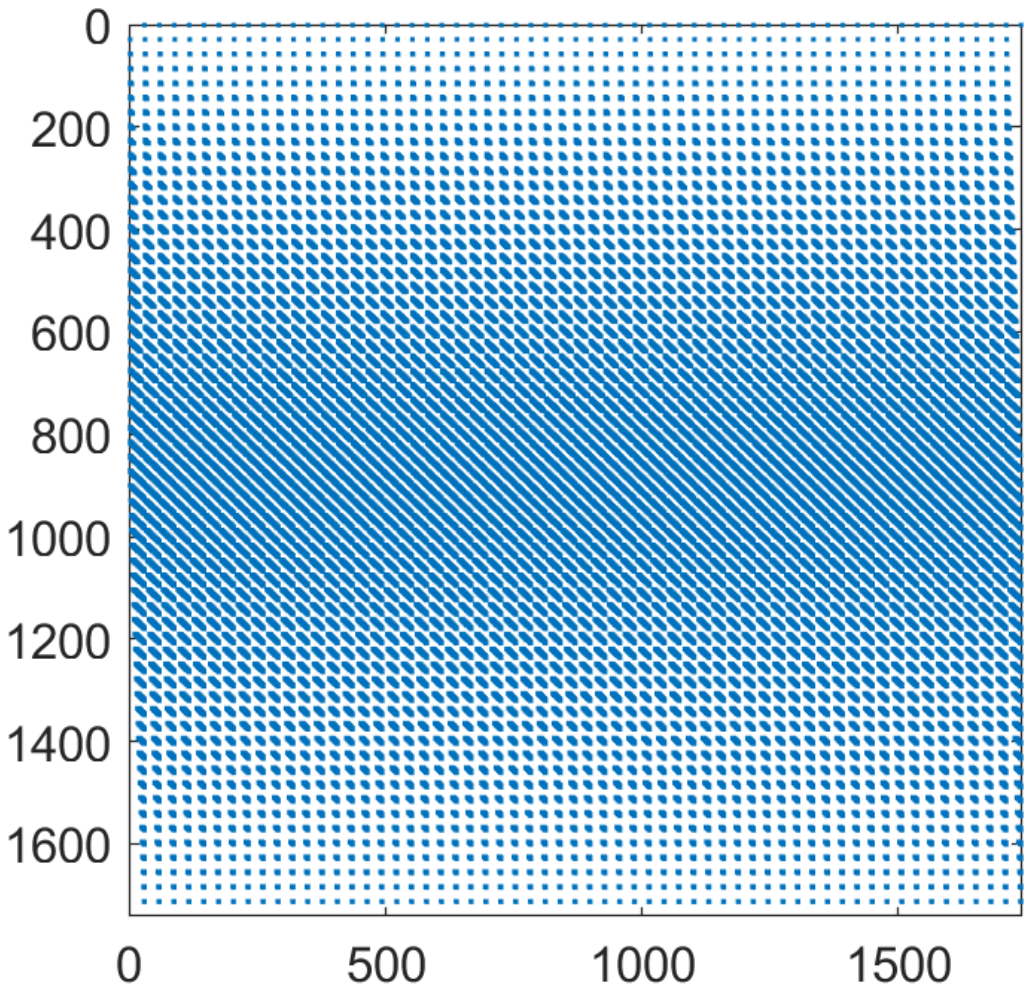}
  \includegraphics[width=0.4\textwidth]{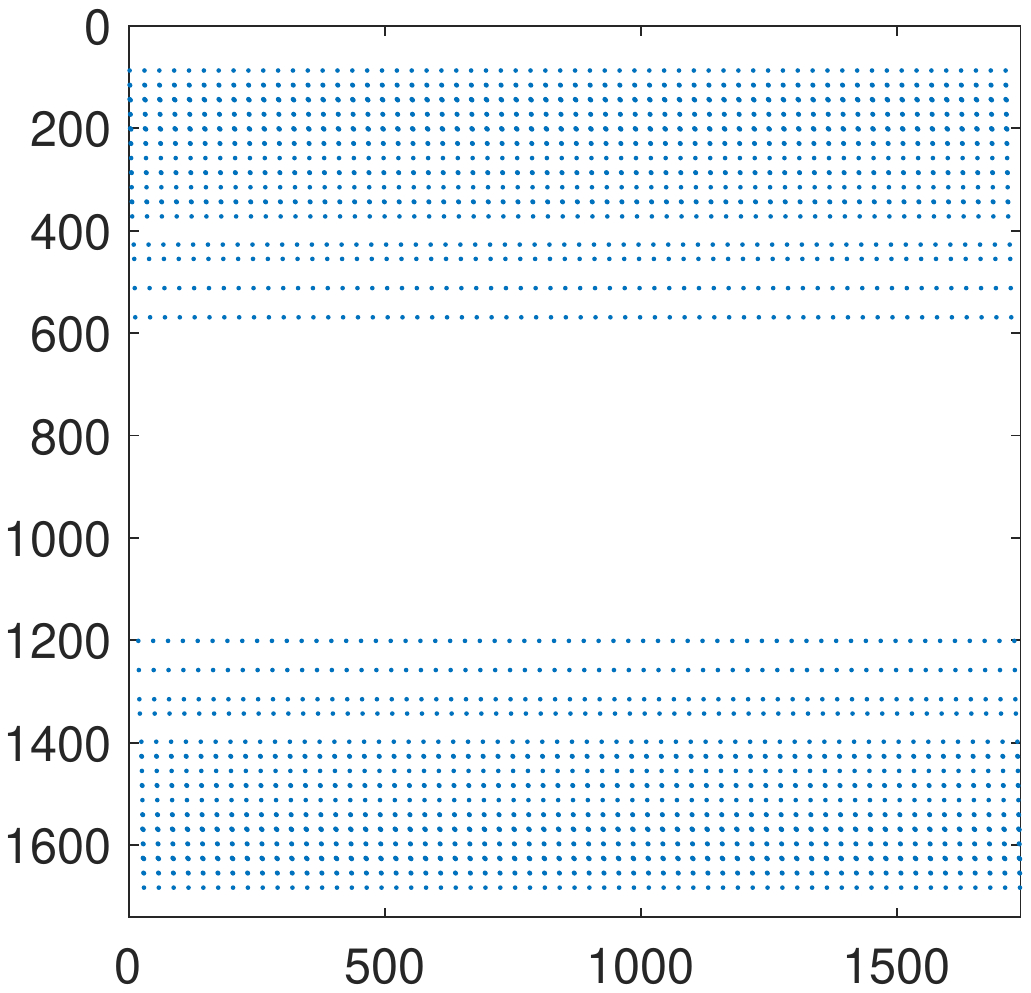}
  \caption{Sparsity of the matrix $\Lambda^k_n$ for $k=2^4$ (left) and $k=2^{11}$ (right). Rows represent different $(\theta_\rmr,\theta_\rmo)$, and columns represent different $(\theta_\rms,\theta_\rmi)$. Elements that are larger than half of the maximal element in $\Lambda^k_n$ are shown. For $k = 2^4$, we use larger computational domain $[-8,8]^2$, and the step size is $h = 2^{-8}$.}
  \label{fig:sparsity}
\end{figure}

Finally we compare the change of $\Lambda^k_n$ as $n$ differs, for different $k$. Let $n_0(x) = 1$ as the background media whose corresponding map is denoted $\Lambda_0^k$, and by adjusting $A$ we design a sequence of $n(x)$.
We measure how the Frobenius norm $\|\Lambda^k_n-\Lambda_0^k\|_\mathrm{F}$ changes with respect to $\|n-n_0\|_{L^\infty}$ for different $k$.
As can be seen in Figure~\ref{fig:lamdba_vs_n}, as $k$ increases, the slope of $\|\Lambda^k-\Lambda_0^k\|_\mathrm{F}$ as $\|n-n_0\|_{L^\infty} \to 0$ increases. This confirms that bigger $k$ sees more sensitivity of the data when $n$ changes, hence the reconstruction is expected to be better for higher $k$.
\begin{figure}[htbp]
  \centering
  \includegraphics[width=0.4\textwidth]{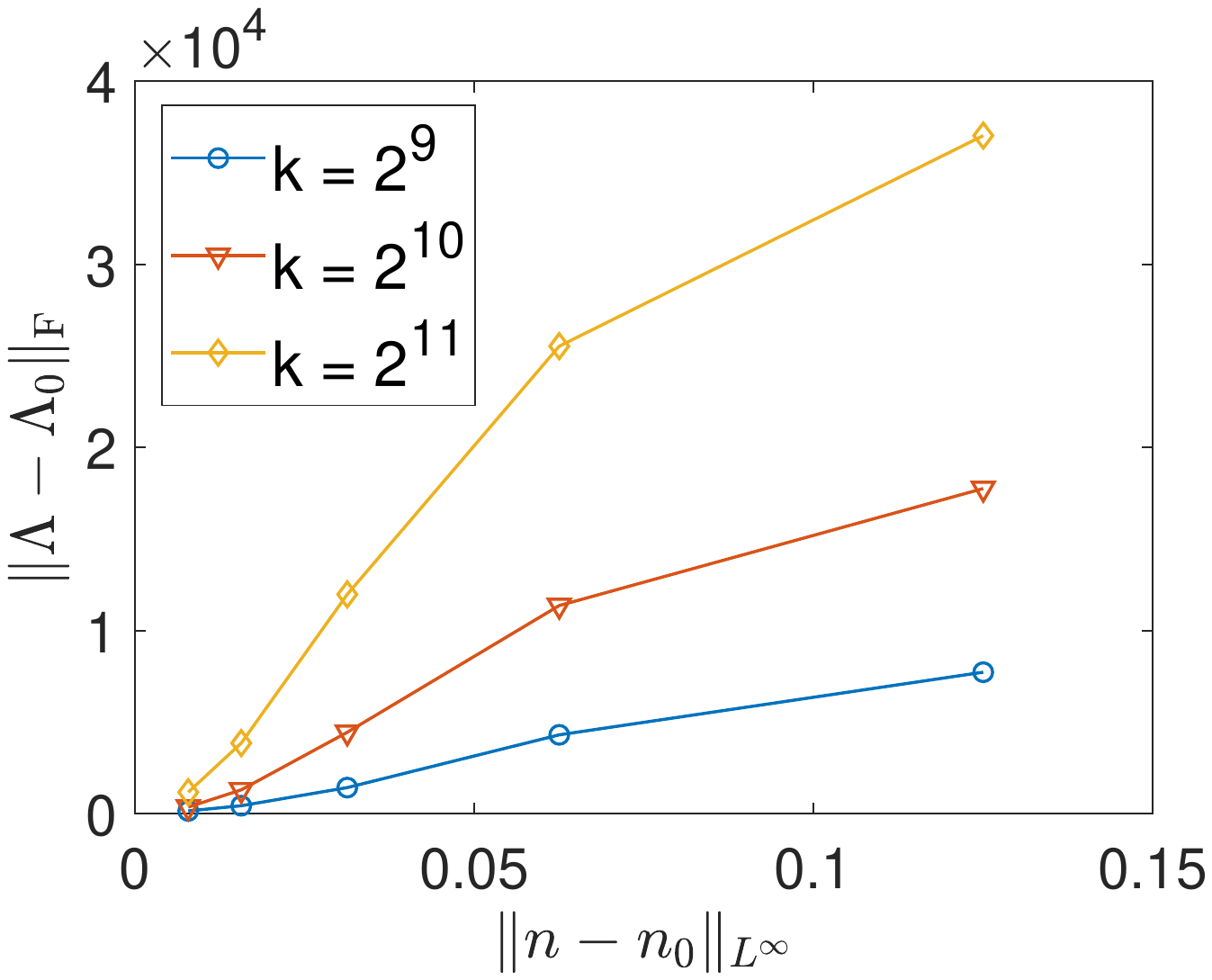}
  \caption{The dependence of $\|\Lambda^k-\Lambda_0^k\|_\mathrm{F}$ on the medium perturbation $\|n-n_0\|_{L^\infty}$. Different $\|n-n_0\|_{L^\infty}$ is obtained by tuning the amplitude $A$ in the medium~\eqref{eqn:medium_ex1}.}
  \label{fig:lamdba_vs_n}
\end{figure}

\section{Inversion Algorithm}\label{sec:numer_inverse}
The inverse problem that we study in this article has a different setup from the conventional one. While the conventional setup has either the concentration in the incoming direction, or in the incoming source location, our experimental setup requires concentration in both direction and source location. Naturally we expect a better stability in the reconstruction process, compared to the traditional formulation. In this section we showcase such stability.

Numerically the reconstruction process is formulated as a PDE-constrained minimization problem, where we seek to minimize the misfit between the data and the forward model:

\begin{equation}
    \min_{n} \left\| D - \Dc^k[n]  \right\|^2_{L^2(\Gamma_{-} \times \Gamma_{+})}\,,
\end{equation}
or equivalently, in the discretized form:
\begin{equation}\label{eq:minimization}
    \min_{n} \mathcal{J}[n], \qquad \text{where} \qquad \mathcal{J}[n]:= \frac{1}{2 n_{\mathtt{rcv}} n_{\mathtt{src}} }\sum_{i=1}^{n_{\mathtt{rcv}}} \sum_{j = 1}^{n_{\mathtt{src}}} \left | D^{i,j} - \left ( \Dc^k[n] \right) ^{i,j} \right|^2.
\end{equation}
In particular, $n_{\mathtt{rcv}}$ and $n_{\mathtt{src}}$ stand for the number of receivers and sources, and each point $(\Dc^k[n] ) ^{i,j}$ is the intensity squared of the impulse response generated by illuminating the medium $n$ with a tight beam  given by~\eqref{eqn:source} originated at $x_{\rm{s}}^{i}$ with direction  $v_{\rm{s}}^{i}$, which is then filtered using~\eqref{eqn:phi_form} centered at $x_{\rm{r}}^{j}$ with direction  $v_{\rm{r}}^{j}$. See definition in~\eqref{eqn:husimi_receive_num}, with $(x_{\rm{r}},v_{\rm{r}})$ replaced by $(x^j_{\rm{r}},v^j_{\rm{r}})$, and $\uk$ solving~\eqref{eqn:Helmholtz_numerics} with $(x_{\rm{s}},v_{\rm{s}})$ replaced by $(x^i_{\rm{s}},v^i_{\rm{s}})$.

We employ quasi-newton methods for finding a local minimum\footnote{Given that the problem is very non-linear, there is not guarantees that we can find the global minimum.}, thus we need to efficiently compute the gradient of the misfit function. In order to provide a fully self-contained exposition we briefly summarize below how to compute the gradient for only one data point using the adjoint-state methods. From there the computation for the full gradient can be easily deduced.

We can readily compute the application of the gradient to a perturbation $\delta n$ by using the chain rule, which results in
\begin{equation}
    \nabla \mathcal{J}[n] \delta n = \left( \frac{k}{2\pi}\right)^d \left ( D - \Hk \uk(x_\rmr,v_\rmr) \right)  \text{Real} \left ( 2 \overline{(\uk*\phi_{v_\rmr}^k(x_\rmr))} (\phi_{v_\rmr}^k(x_\rmr)* F[n] \delta n)\right)\,,
\end{equation}
where $F[n]$ is linearized forward wave-propagation operator, given by the Born approximation of the scattered wave-field~\cite{Bo:1926Quantenmechanik}. Thus the gradient can be easily computed by applying the adjoint of the Born approximation to the residual times the filter function, i.e.,
\[
    \nabla \mathcal{J}[n] =2\left( \frac{k}{2\pi}\right)^d \text{Real} \left ( F[n]^* \left ( \big ( D - \Hk \uk(x_\rmr,v_\rmr) \big )    (\uk*\phi_{v_\rmr}^k(x_\rmr)) \overline{(\phi_{v_\rmr}^k(x_\rmr - x)  )}\right )\right ).
\]
Fortunately, the application of the adjoint of the Born approximation operator is well studied: it can be performed by solving the adjoint equation followed by a multiplication by the solution of the forward wave problem\footnote{We redirect the interest readers to~\cite{BoGiGr:2017high} for a modern self-contained presentation.}. In this case the adjoint equation is the same Helmholtz equation, but with adjoint Sommerfeld radiation conditions, i.e., we solve
\begin{equation}\label{eqn:adj_scattering}
\begin{aligned}
\Delta v + k^2 n(x) v &=  \left ( D - \Hk \uk(x_\rmr,v_\rmr) \right)    (\uk*\phi_{v_\rmr}^k(x_\rmr)) \overline{(\phi_{v_\rmr}^k(x_\rmr - x)  )} \quad x \in \Rb^d \,, \\
\frac{\partial v}{\partial r} + \ri k v &= \Oc(r^{-(d+1)/2}) \text{ as } r=|x|\to\infty \,.
\end{aligned}
\end{equation}
Thus, using~\eqref{eqn:adj_scattering}, we can easily compute the application of the adjoint of the Born approximation
\begin{equation} \label{eq:grad_min_problem}
     F[n]^* \left ( \big ( D - \Hk \uk(x_\rmr,v_\rmr) \big )    (\uk*\phi_{v_\rmr}^k(x_\rmr)) \overline{(\phi_{v_\rmr}^k(x_\rmr - x)  )}\right ) = \overline{\uk} v.
\end{equation}
where $v$ solves~\eqref{eqn:adj_scattering}.

We point out that in \eqref{eq:grad_min_problem}, the source for the adjoint is conjugated, thus following \eqref{eqn:phi_form}, we can see that it means that the $\overline{(\phi_{v_\rmr}^k(x - x_\rmr))}$ is pointing towards the interior of the domain in direction $-v_\rmr$.

We solve~\eqref{eq:minimization} using L-BFGS~\cite{ByLuNoZh:1995limited,ZhByLuNo:1997algorithm}, a quasi-Newton method in Matlab. We consider the initial perturbation equal to zero. We set a first order optimality tolerance of $10^{-5}$ and let the algorithm run for a maximum of $300$ iterations or until the tolerance is achieved.

To avoid the inverse crime~\cite{CoKr:2019inverse}, the data is generated by solving the Lippmann-Schwinger equation discretized by the truncated kernel method~\cite{ViGeFe:2016fast} as in Section~\ref{sec:numer}, and the inversion is performed with an $4$th-order finite difference scheme for both~\eqref{eqn:Helmholtz_numerics} and~\eqref{eqn:adj_scattering}. To generate the data, we set the computational domain to be $K = [-1,1]^2$ with $N_{\mathrm{LS}} = 256^2 = 65536$ grid points so that there are at least $12$ points per wavelength for the largest $k = 2^6$. In the inversion, we discretize the same domain $K$ with $N_{\mathrm{FD}} = 163^2 = 26569$ grid points so that there are at least 8 points per wavelength for $k = 2^6$. We enclose the domain $K$ with perfect matching layer (PML) to avoid reflection. We choose the thickness of PML to be $2.5$ times wavelength.

The measurement is taken on $\partial B(R)$ with $R = 0.4$ in all the examples. To generate the probing ray, we set $\sigma = 2^{-2}$ in~\eqref{eqn:numer_source}. We compute the data with the source position and incident direction
\begin{align*}
&x_\rms^{i_1} = (R\cos\theta_\rms^{i_1},R\sin\theta_\rms^{i_1}) \\
&v_\rms^{i_1,i_2} = (-\cos(\theta_\rms^{i_1}+\theta_\rmi^{i_2}),-\sin(\theta_\rms^{i_1}+\theta_\rmi^{i_2}))
\end{align*}
where $\theta_\rms^{i_1} = \pi + i_1\frac{\pi}{48}$ for all $i_1 = 0,\dots, 95$ and $\theta_\rmi^{i_2} = -\frac{\pi}{2} + i_2\frac{\pi}{49}$ for all $i_2 = 1,\dots,48$, and the receiver position
\begin{align*}
&x_\rmr^{j_1} = (R\cos\theta_\rmr^{j_1},R\sin\theta_\rmr^{j_1}) \\
&v_\rmr^{j_1,j_2} = (\cos(\theta_\rmr^{j_1}+\theta_\rmo^{j_2}),\sin(\theta_\rmr^{j_1}+\theta_\rmo^{j_2}))
\end{align*}
where $\theta_\rmr^{j_1} = j_1\frac{\pi}{48}$ for all $j_1 = 0,\dots, 95$ and $\theta_\rmo^{j_2} = -\frac{\pi}{2} + j_2\frac{\pi}{49}$ for all $j_2 = 1,\dots,48$.

In all the examples, the scattered data is perturbed with the noise in the form
\begin{equation}\label{eqn:data_noise}
    \widetilde{D}^{i,j}  = D^{i,j} + 0.05 \eps \frac{D^{i,j}}{|D^{i,j}|}
\end{equation}
where $\eps$ is symmetric Bernoulli random variable that takes the values $\pm1$.

All the experiments are reported on a server with 64-core Intel Xeon CPU and 256 Gigabytes RAM. The code accompanying this manuscript are publicly available~\cite{ChDiLiZe:2021Husimi}.

In order to illustrate the reconstruction using Husimi data, we choose three examples of increasing complexity. The exact contrast function $q(x)$'s are shown in Figure~\ref{fig:inverse_truth}.
\begin{figure}[htbp]
  \centering
  \includegraphics[width=0.3\textwidth]{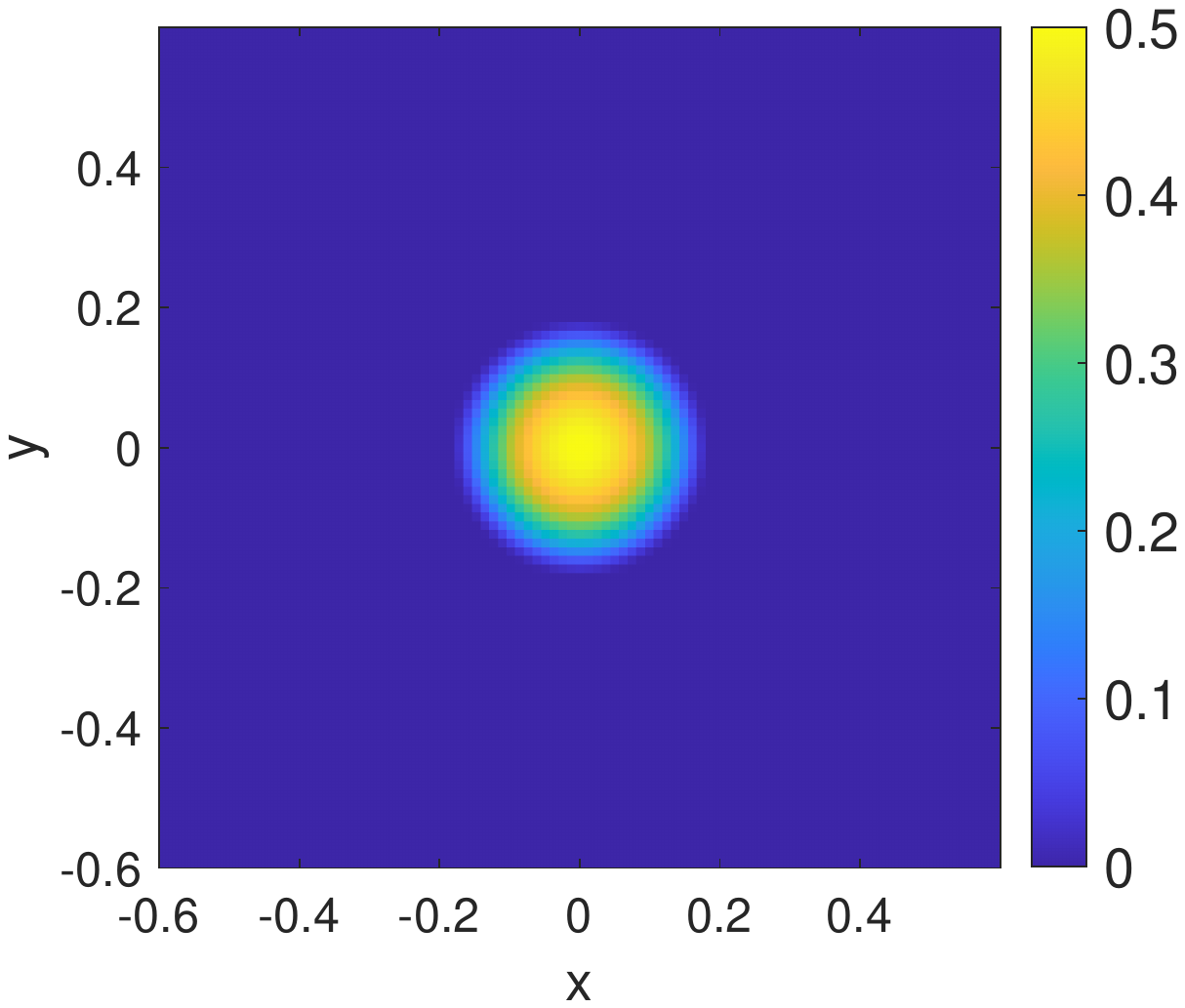}
  \includegraphics[width=0.3\textwidth]{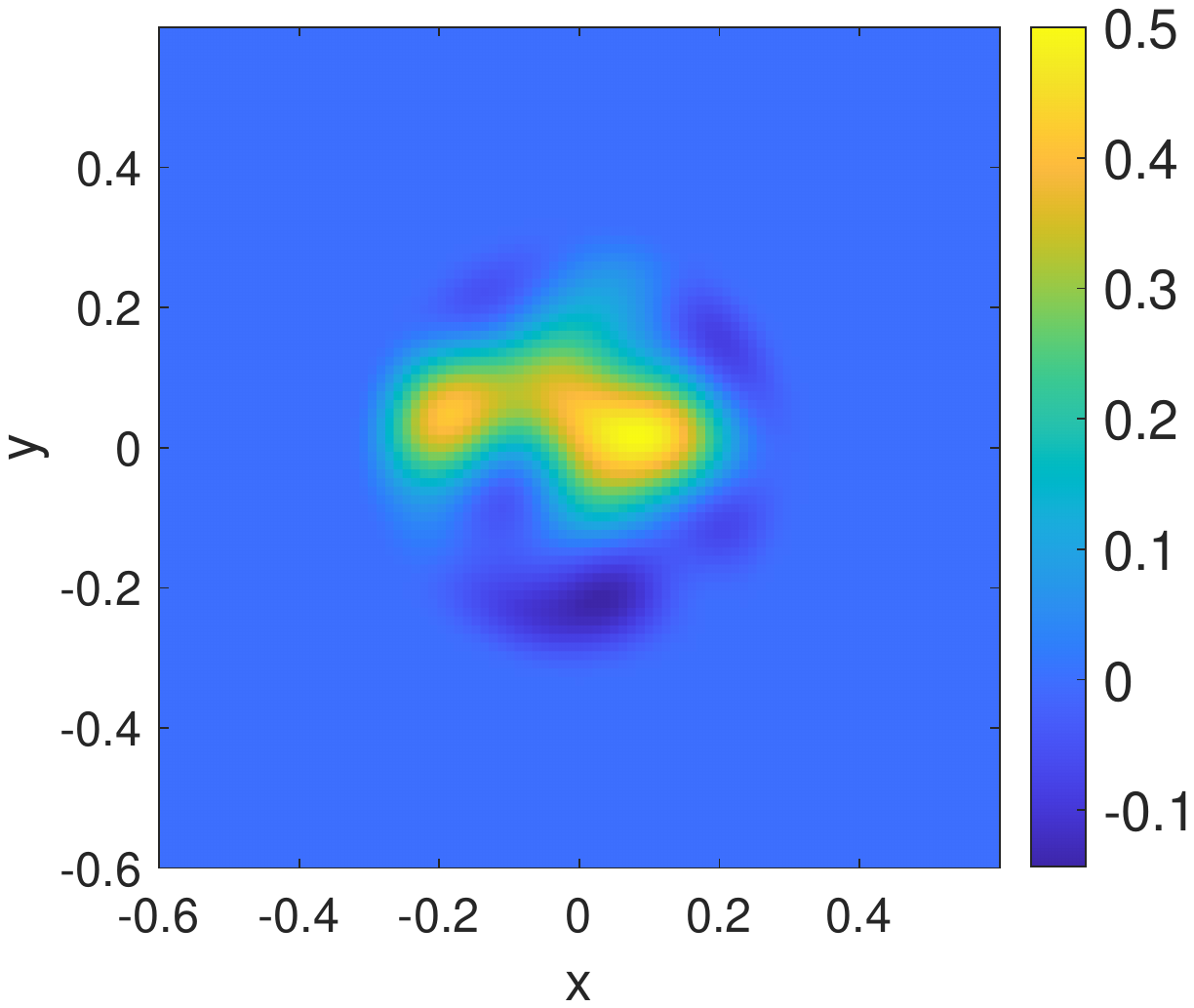}
  \includegraphics[width=0.3\textwidth]{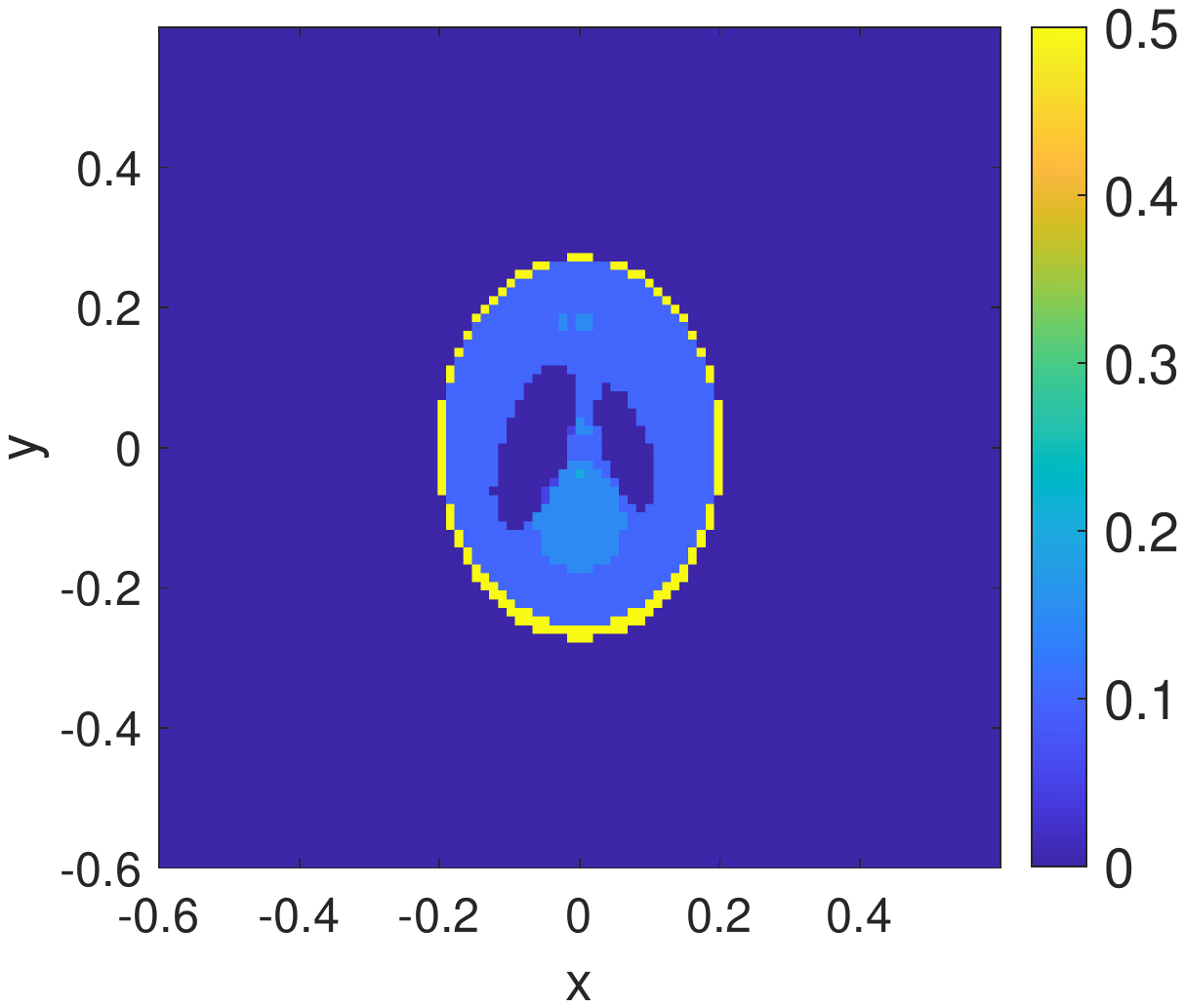}
  \caption{The contrast function $q(x)$ for our three examples: a bump function (left), a delocalized function (middle) and the Shepp-Logan phantom (right).
  }
  \label{fig:inverse_truth}
\end{figure}

In the first example, we consider a single bump in the form~\eqref{eqn:medium_ex1} with $A = 0.5$ and $r = 0.2$, which is shown in Figure~\ref{fig:inverse_truth} (left). We run the minimization loop as described above using $k = 2^4$ and $k = 2^6$, and the resulting reconstruction are shown in Figure~\ref{fig:inverse_bump_LS}. From Figure~\ref{fig:inverse_bump_LS} we can clearly see that as $k$ becomes larger, the reconstruction becomes closer to the true medium. The solution time for $k=2^6$ is 15787.1 seconds.
\begin{figure}[htbp]
  \centering
  \includegraphics[width=0.4\textwidth]{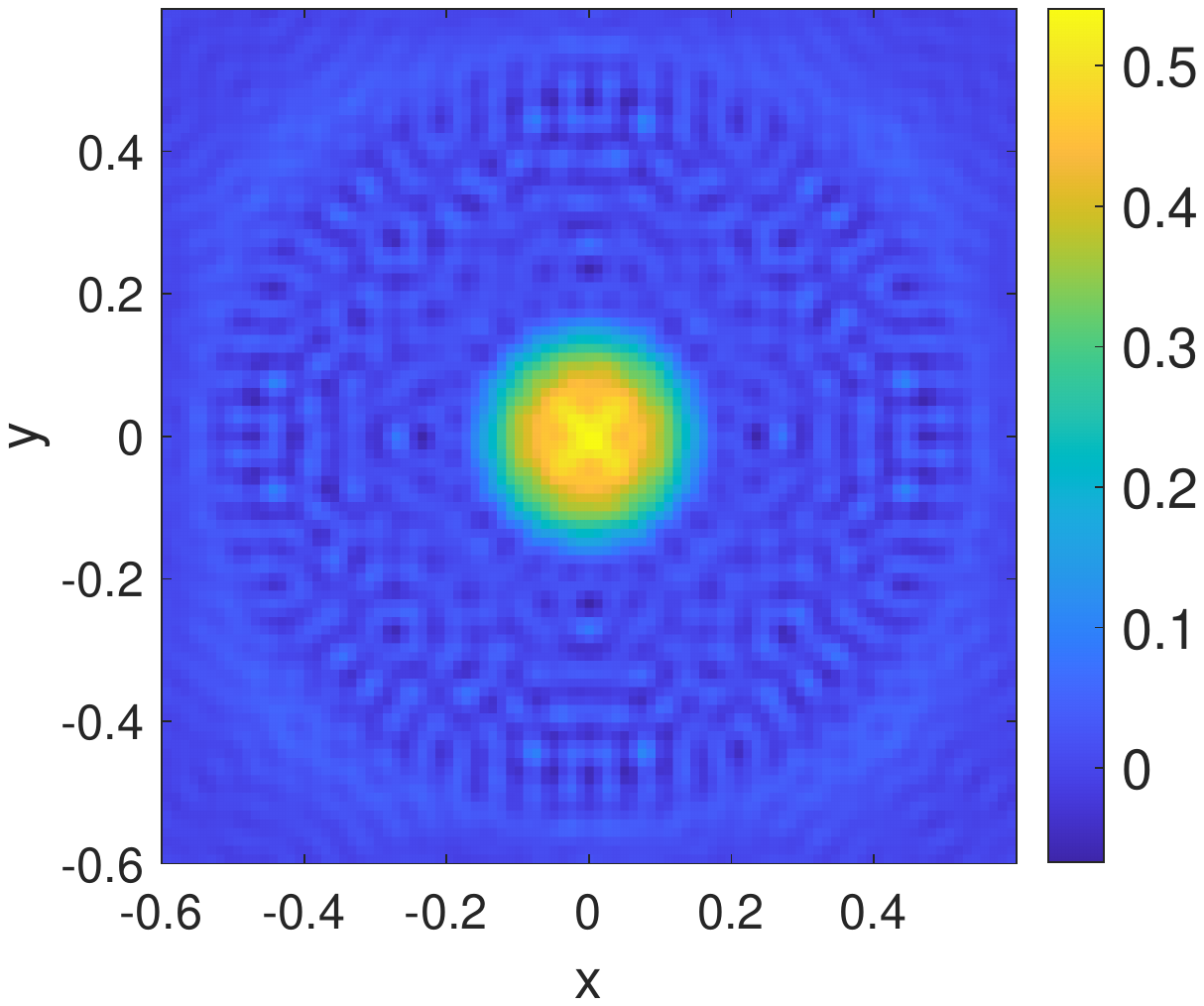}
  \includegraphics[width=0.4\textwidth]{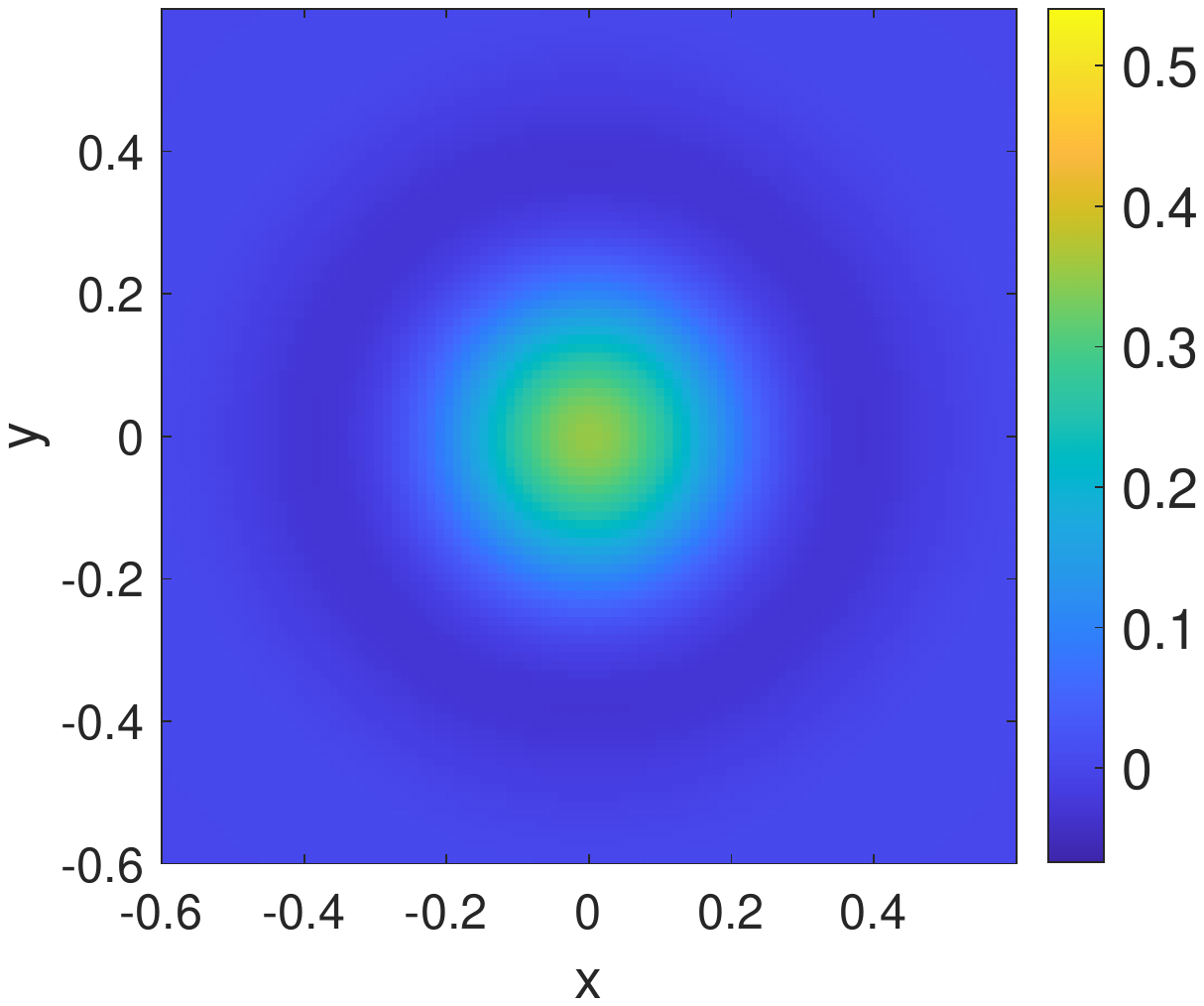}
  \\
  \includegraphics[width=0.4\textwidth]{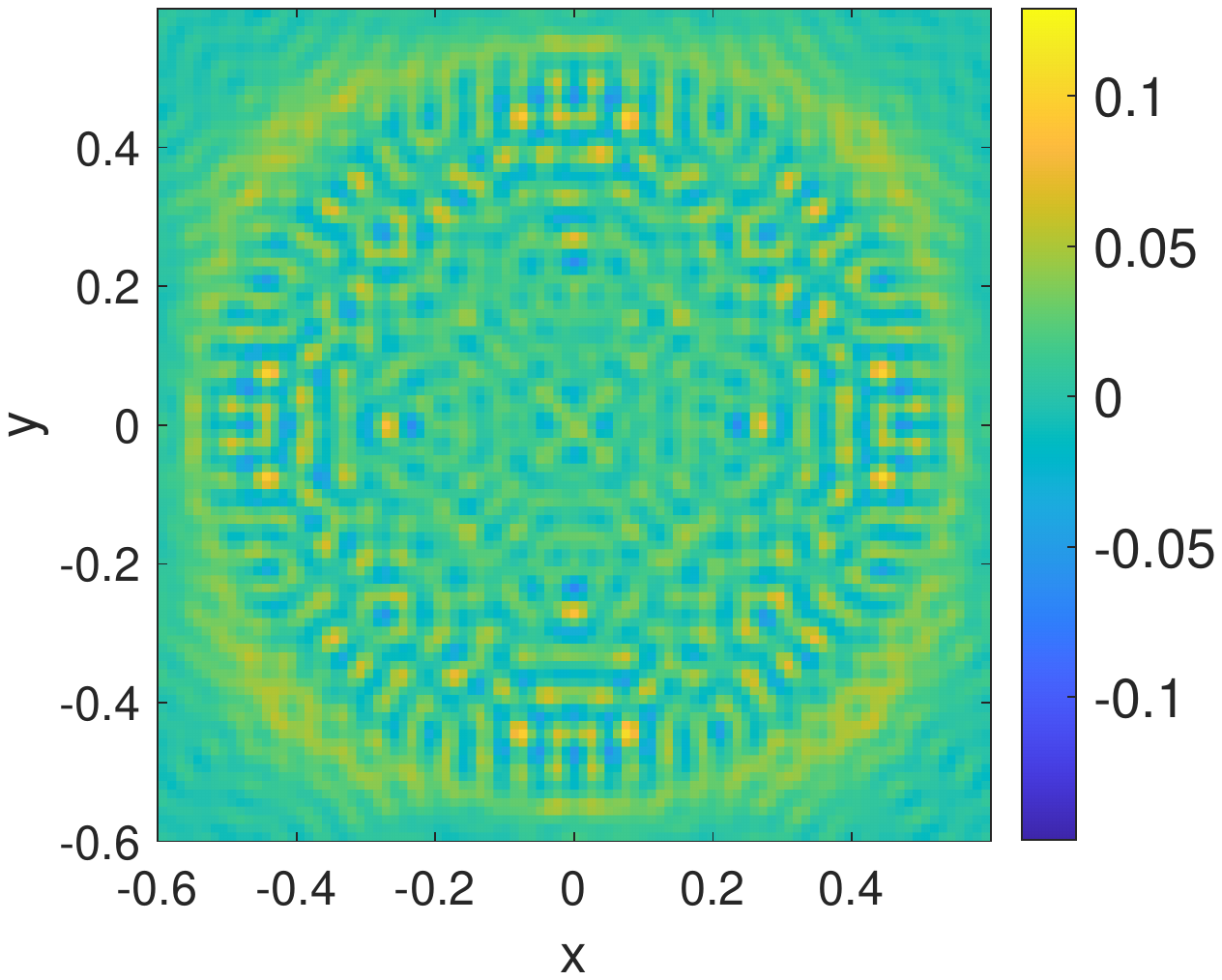}
  \includegraphics[width=0.4\textwidth]{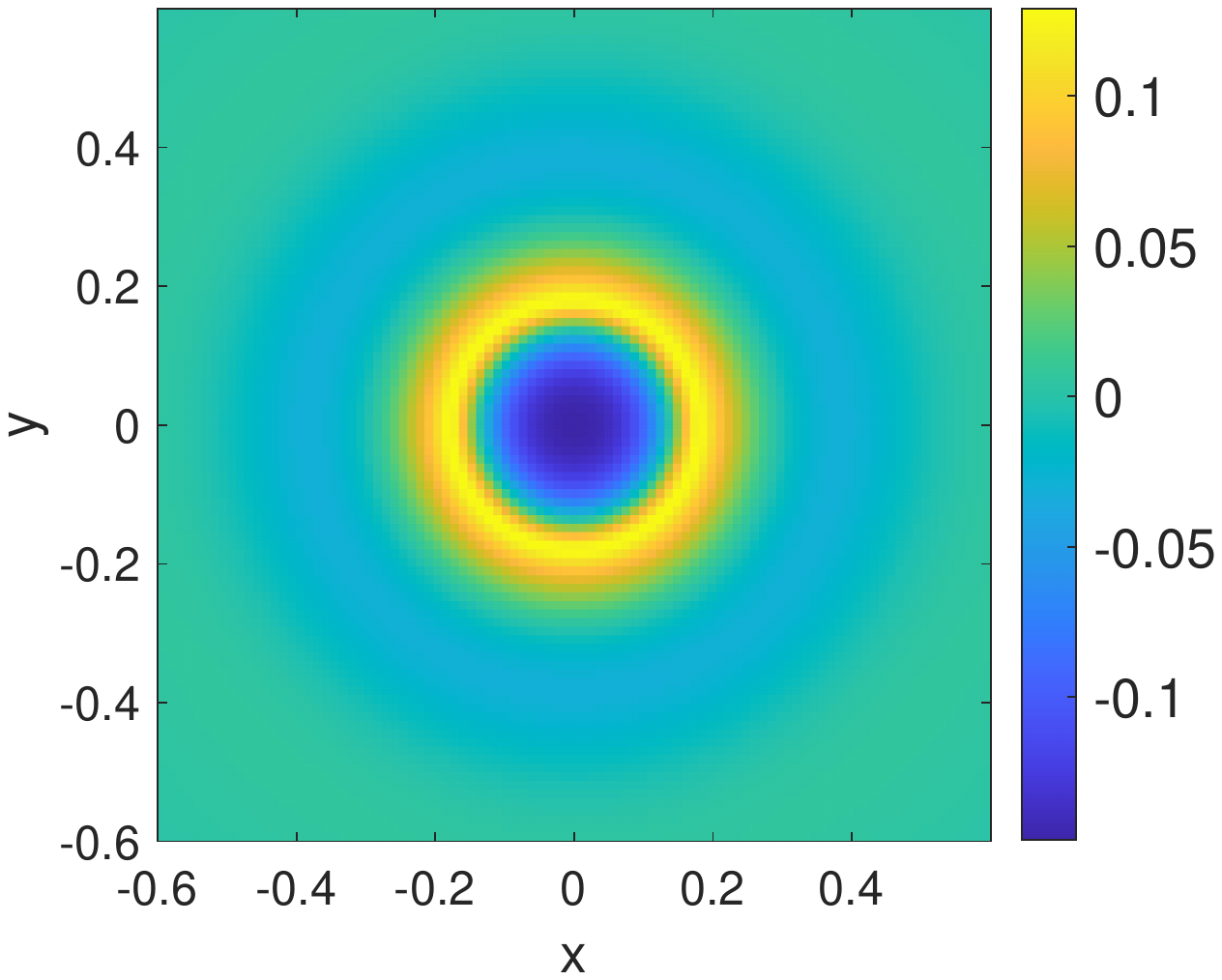}

  \caption{Recovering a single bump contrast function. The upper row shows the estimated contrast function and the lower row shows the reconstruction error at $k=2^6$ (left) and $k=2^4$ (right).
  }
  \label{fig:inverse_bump_LS}
\end{figure}

In the second example, we consider a delocalized medium. The delocalized contrast function $q(x)$ is obtained by convolving a pointwise independent Gaussian random field with a Gaussian mollifier. The main difference with the single bump example is that the refractive index, can be smaller than the background one, thus allowing for more complex ray paths as shown in Figure~\ref{fig:inverse_truth} (center). We repeat the same experiments, whose results are shown in Figure~\ref{fig:inverse_delocal_LS}. The solution time required for $k=2^6$ is 13185.3 seconds.
\begin{figure}[htbp]
  \centering
  \includegraphics[width=0.4\textwidth]{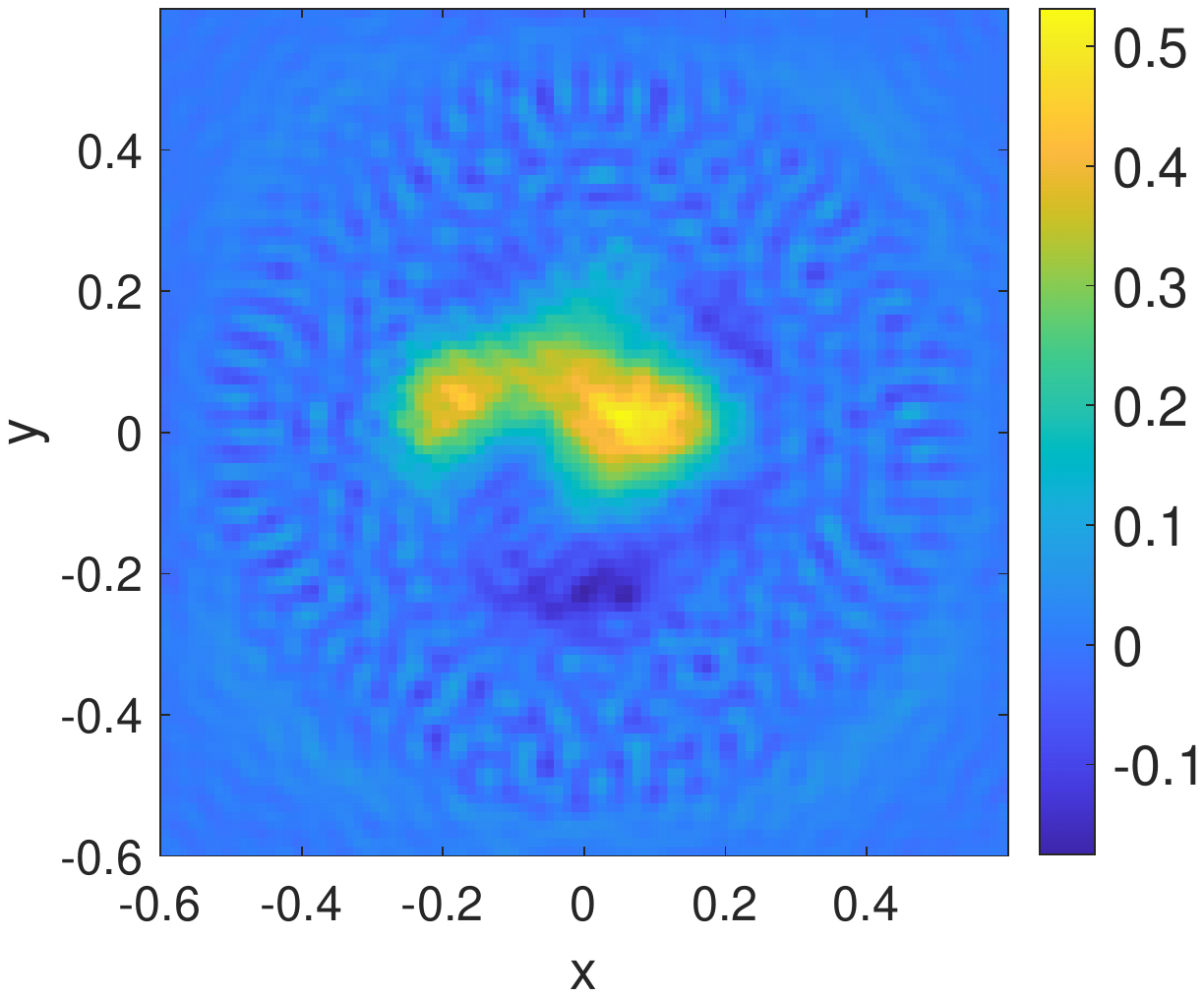}
  \includegraphics[width=0.4\textwidth]{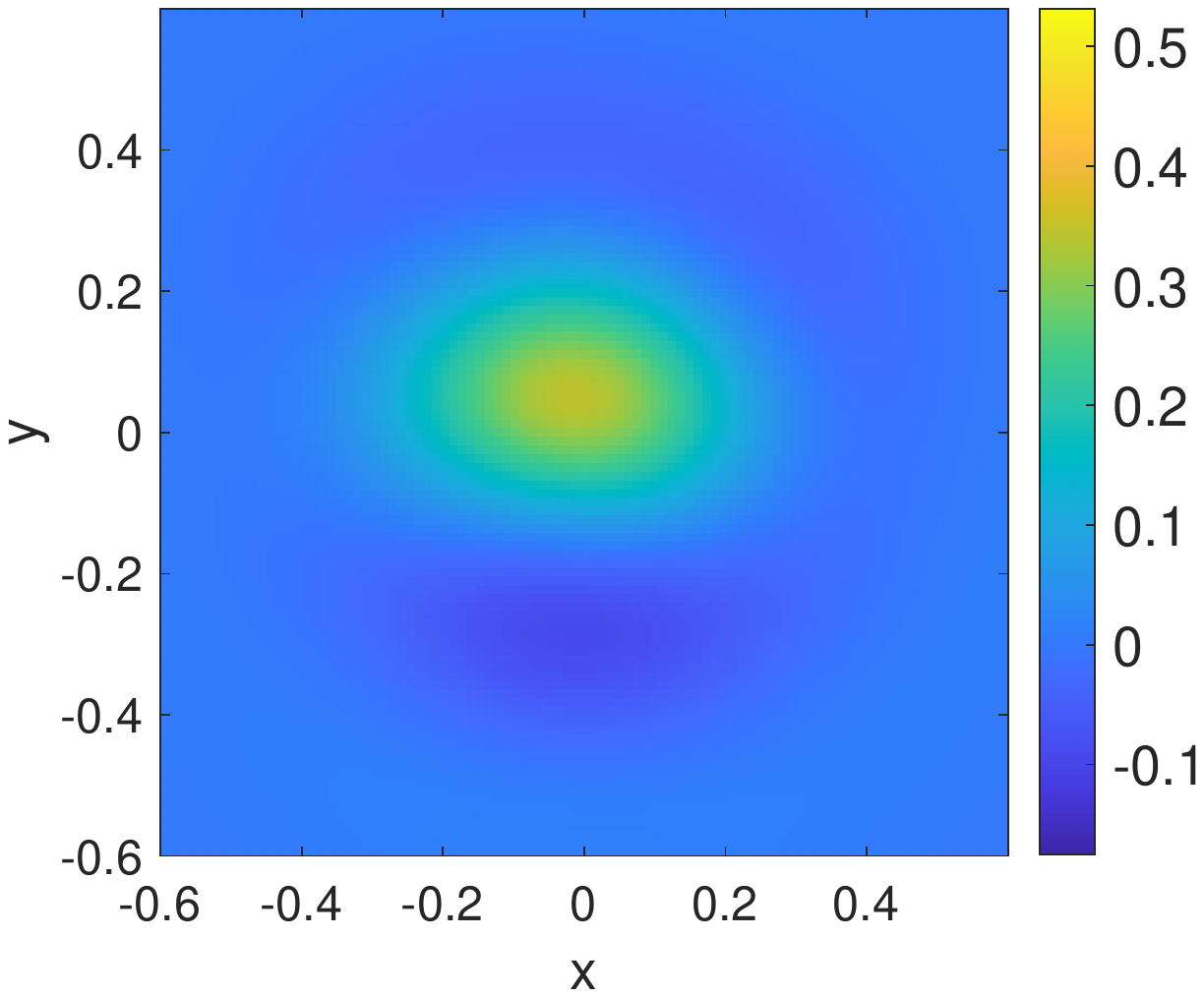}
  \\
  \includegraphics[width=0.4\textwidth]{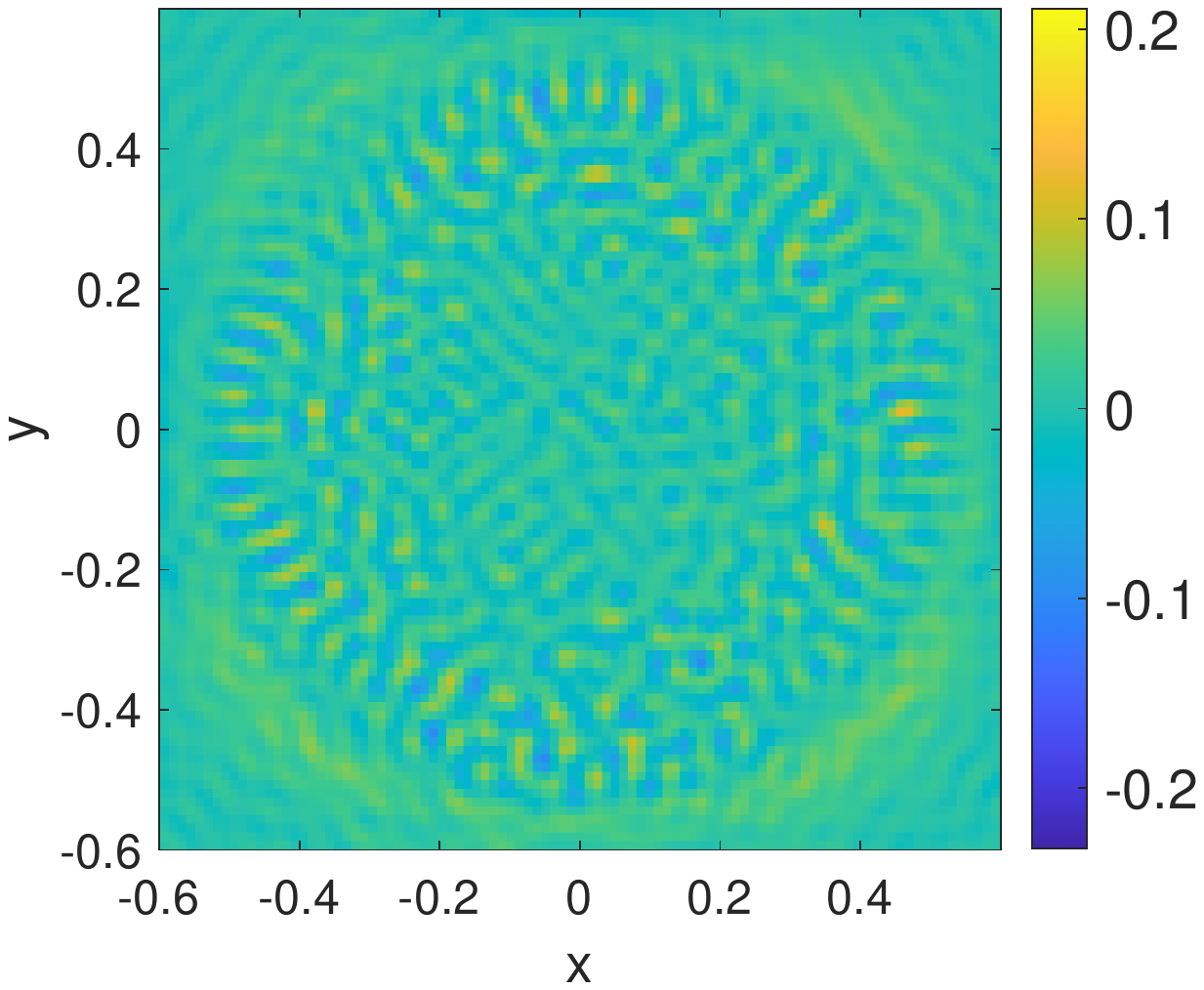}
  \includegraphics[width=0.4\textwidth]{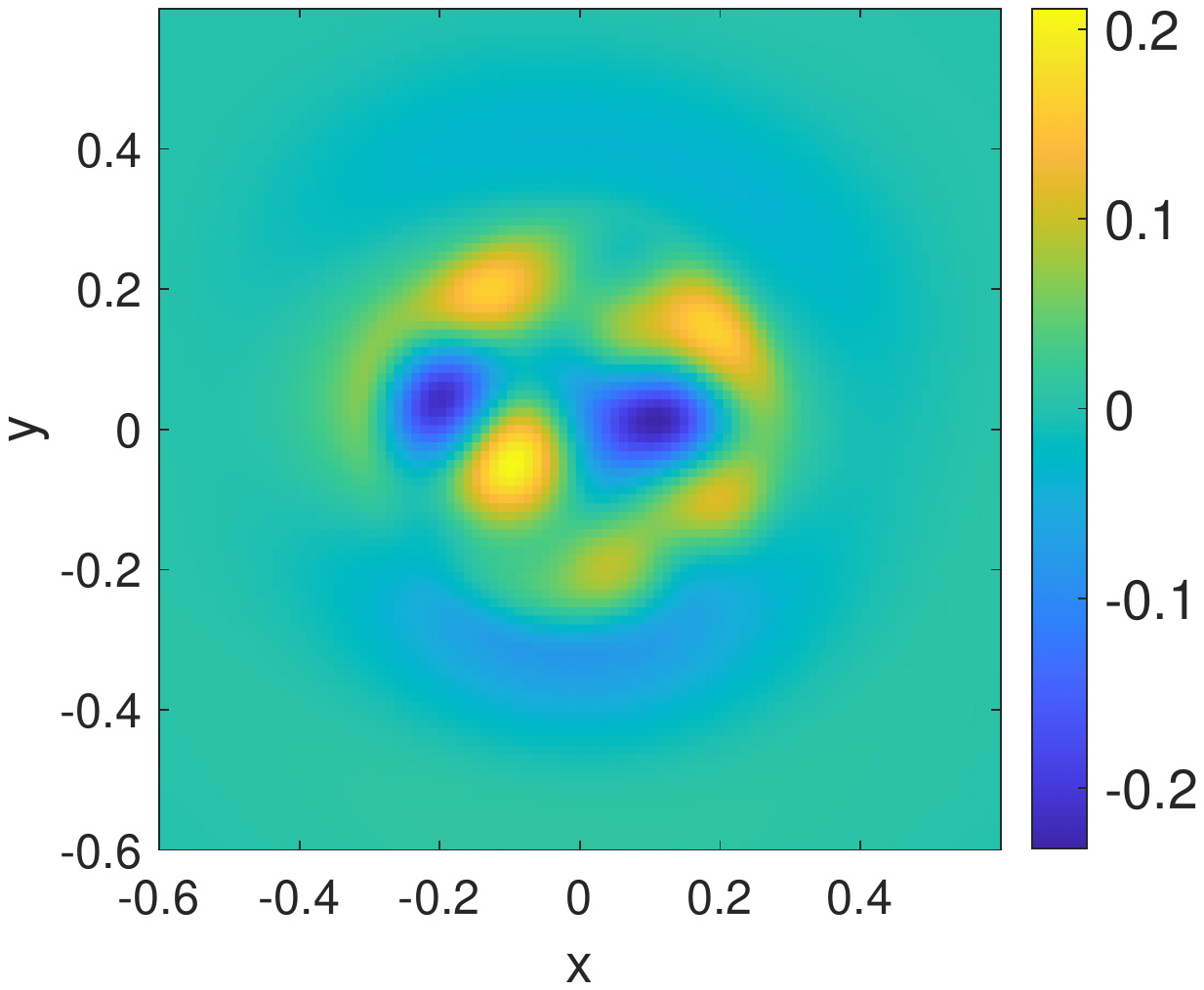}

  \caption{Recovering a delocalized contrast function. The upper row shows the estimated contrast function and the lower row shows the reconstruction error at $k=2^6$ (left) and $k=2^4$ (right).
  }
  \label{fig:inverse_delocal_LS}
\end{figure}

Finally, for the third example, we consider the more challenging, and more practical, problem of recovering the Shepp-Logan phantom, depicted in Figure~\ref{fig:inverse_truth} (right). In this case we have very sharp transitions of the refractive index, which will generate a strong reflection, compared to the refraction-dominated media considered before. In addition, the interior of the still acts as a resonant cavity, thus creating a large amount of interior reflections, which are exacerbated as the frequency increases. We perform the same experiments as above, whose results are depicted in in Figure~\ref{fig:inverse_shepplogan_LS}. The solution time required for $k=2^6$ is 14640.4 seconds. In this case, the reconstruction is qualitative worse than before. We can still see the shape of the phantom, but with a large amount of artifacts. These artifacts are common to the three examples, but are somewhat more notorious for the Shepp-Logan phantom. Indeed, these artifacts can be in part explained by the large difference in the dispersion relation between the forward and backwards discretizations. The Lippmann-Schwinger discretization used for the forward problem is known to be highly accurate if the media is smooth. In the cases before, the data generated by the Lippmann-Schwinger solver is close to the analytical solution, and the artifacts seems to come mostly for the phase errors in the Finite-Difference discretization. However, in this case the phantom is discontinuous thus creating large phase errors in the solution of the equation, and therefore the forward map, which in return produce more notorious artifacts.
\begin{figure}[htbp]
  \centering
  \includegraphics[width=0.4\textwidth]{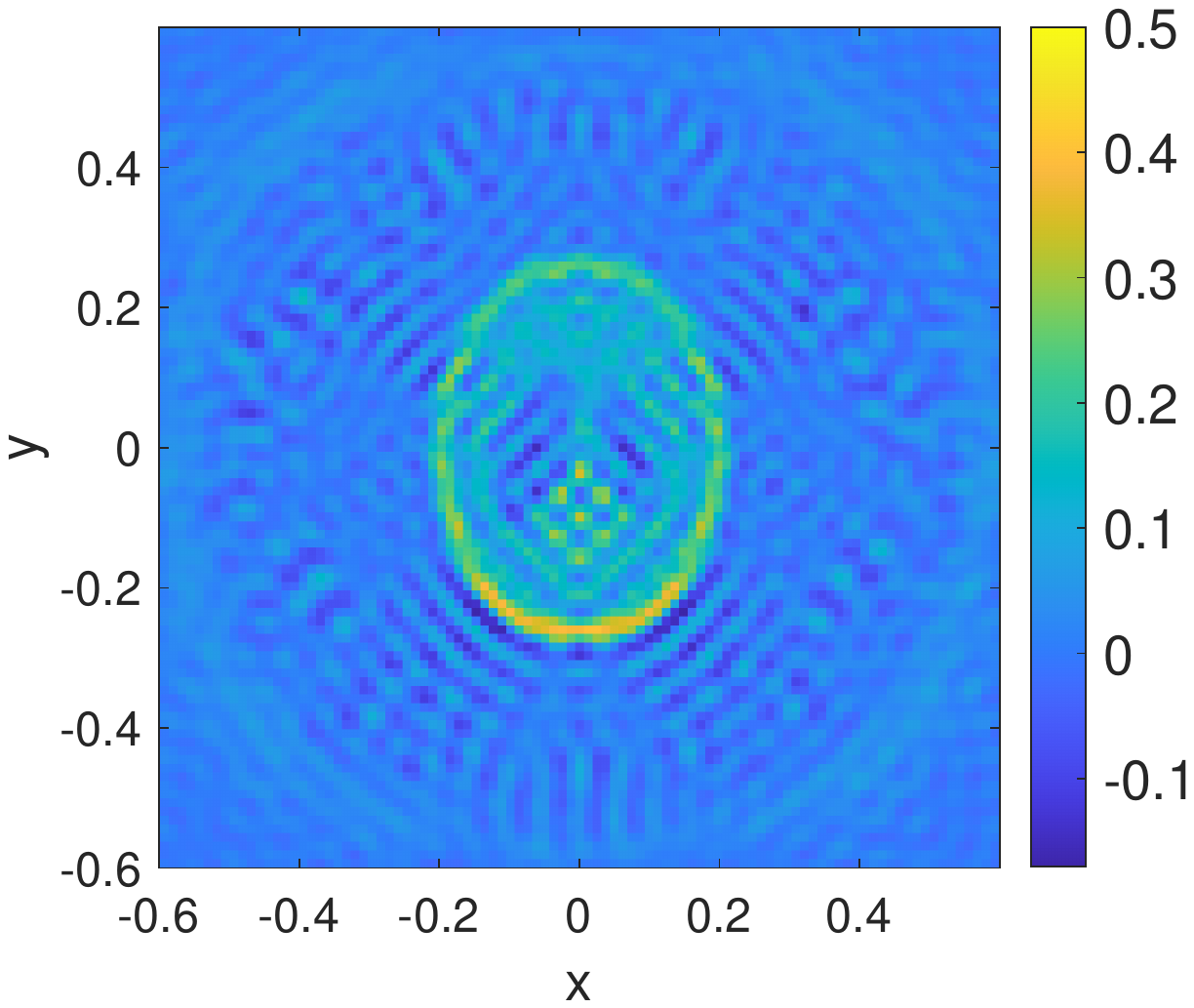}
  \includegraphics[width=0.4\textwidth]{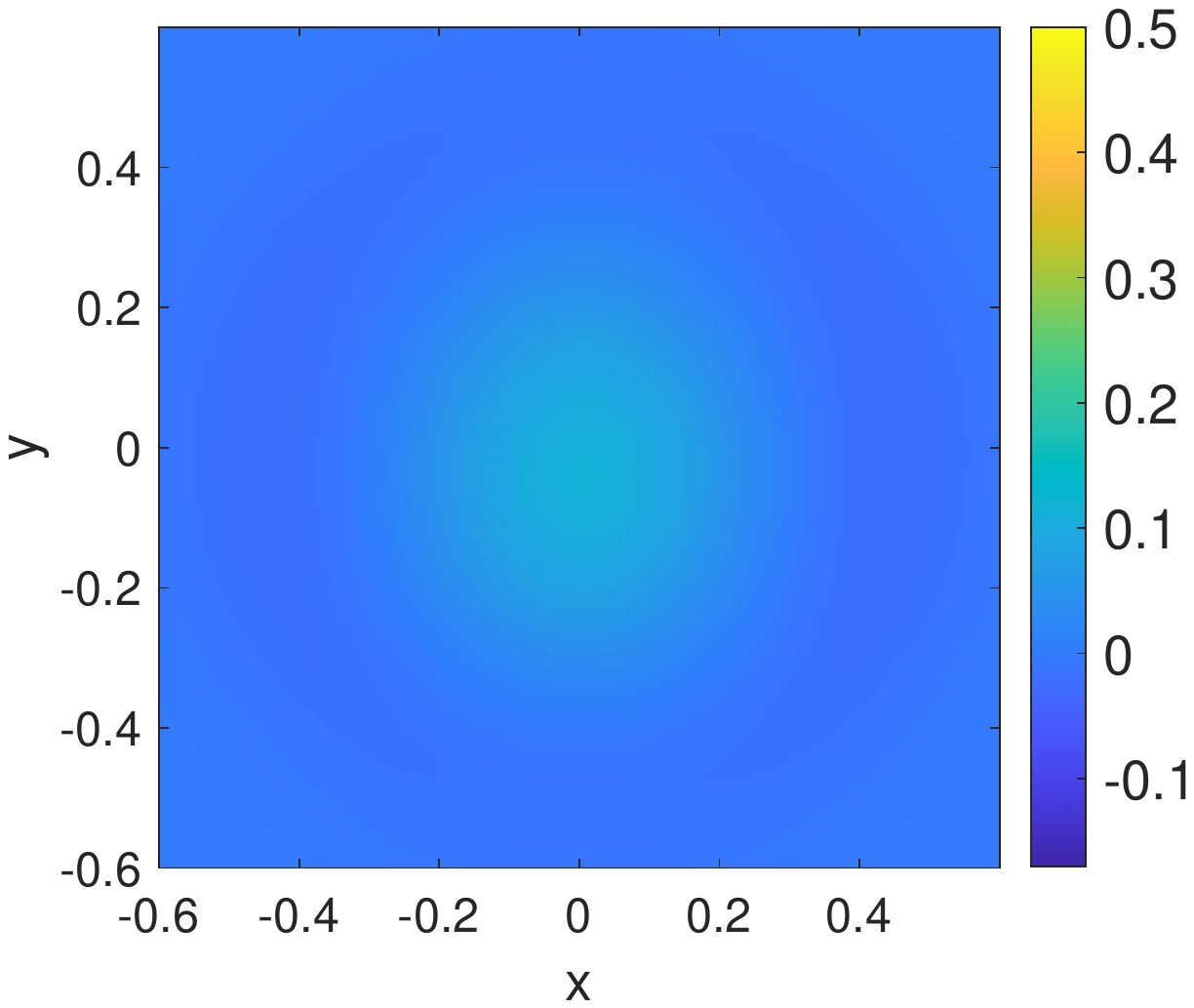}
  \caption{Recovering the Shepp-Logan phantom. The estimated contrast functions are shown for $k=2^6$ (left) and $k=2^4$ (right).
  }
  \label{fig:inverse_shepplogan_LS}
\end{figure}

To avoid inverse crime, we have used two different solvers for computing the equation. The two solvers produce relatively large phase errors that propagate in the reconstruction. The reconstruction can be significantly improved if we use the same PDE solvers in generating data and reconstructing the media. In Figure~\ref{fig:inverse_bump_FD}, we show the reconstructions of the same single bump medium as in Figure~\ref{fig:inverse_bump_LS} but with the $4$th-order finite difference for both data generation and inversion. It can be seen that the artifacts in the estimated medium is much smaller for larger $k$ and the reconstructed medium achieves a relative $L^2$ error of $0.0389$ for $k=2^6$. Better reconstruction can also be seen in Figure~\ref{fig:inverse_delocal_FD} for the reconstructed delocalized medium, whose relative $L^2$ error is $0.0341$ for $k=2^6$. In Figure~\ref{fig:inverse_shepplogan_FD}, we show the reconstruction for the Shepp-Logan phantom. We can observe that as the frequency increased the reconstruction becomes better, although due to computational limitations induced by the current implementation we were unable to test with a higher frequency, however, we would expect to obtain even a better reconstruction.
\begin{figure}[htbp]
  \centering
  \includegraphics[width=0.4\textwidth]{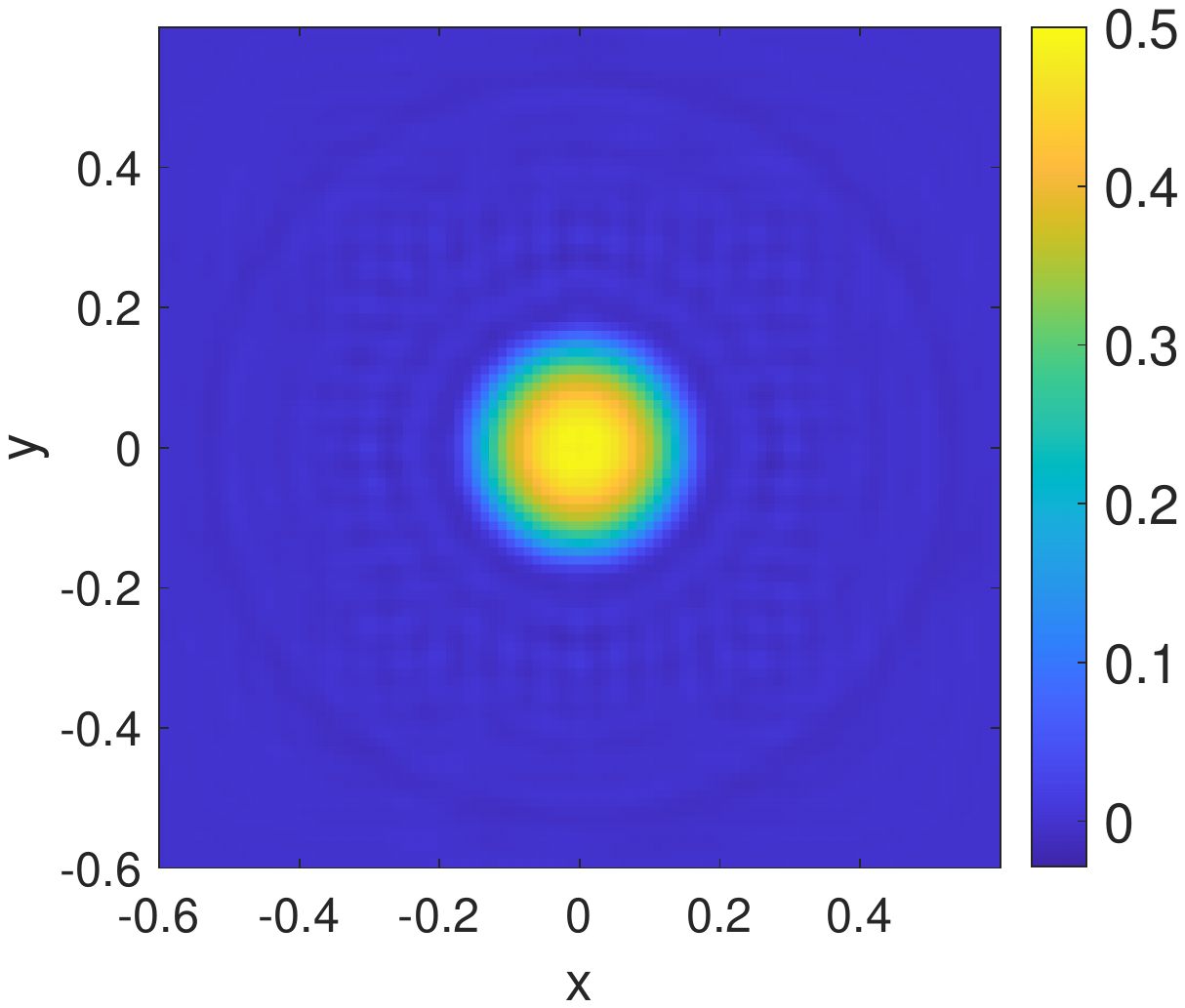}
  \includegraphics[width=0.4\textwidth]{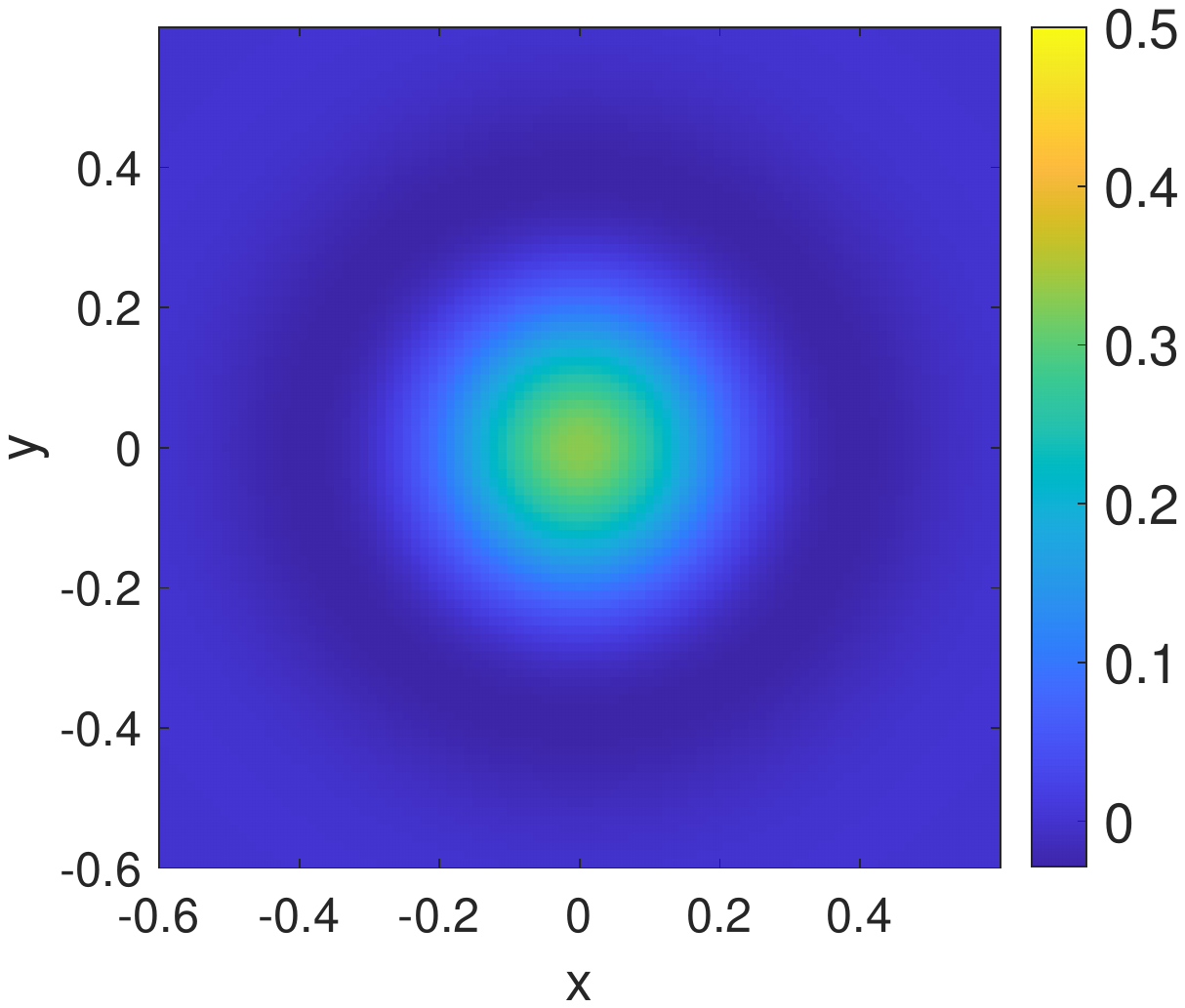}
  \\
  \includegraphics[width=0.4\textwidth]{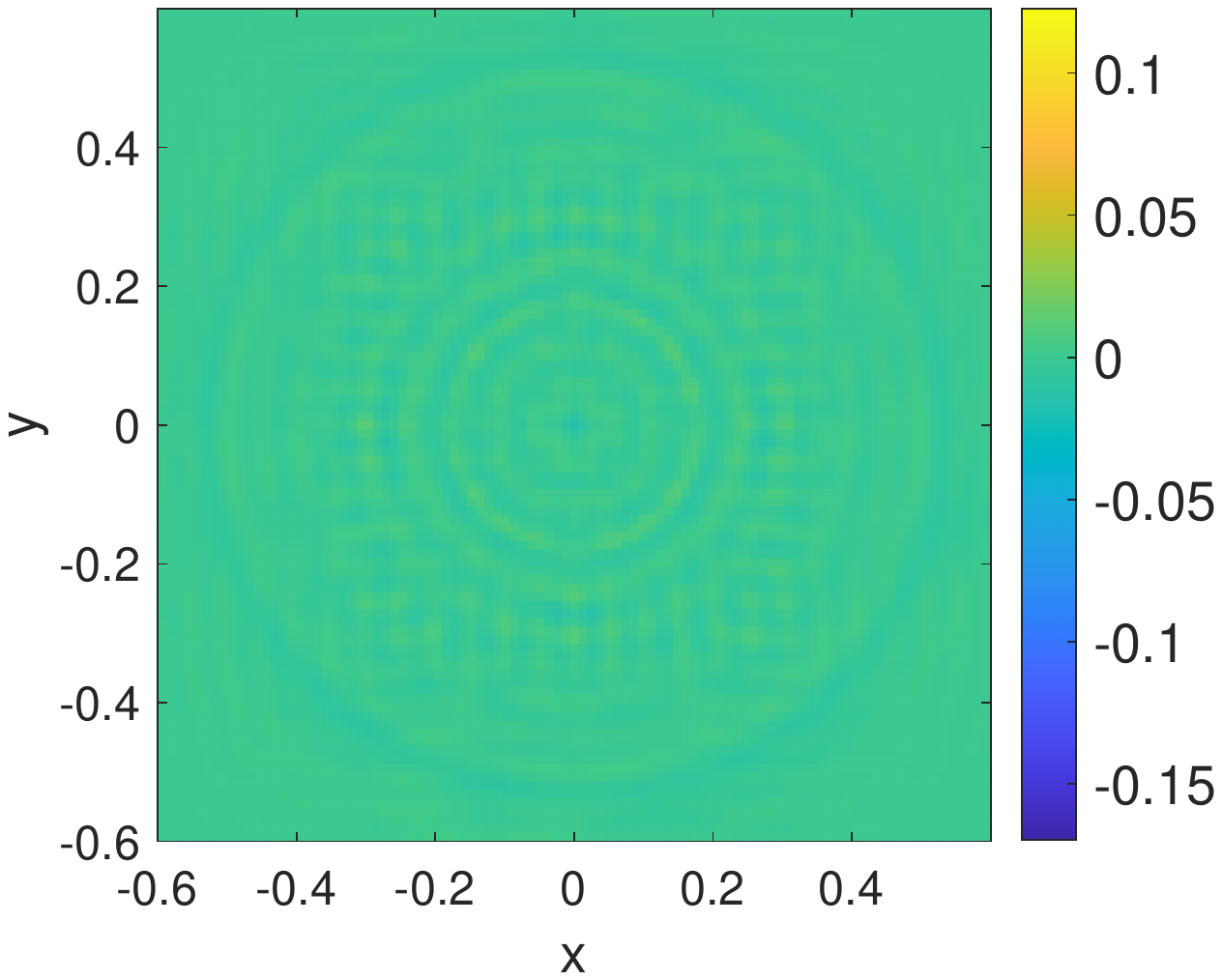}
  \includegraphics[width=0.4\textwidth]{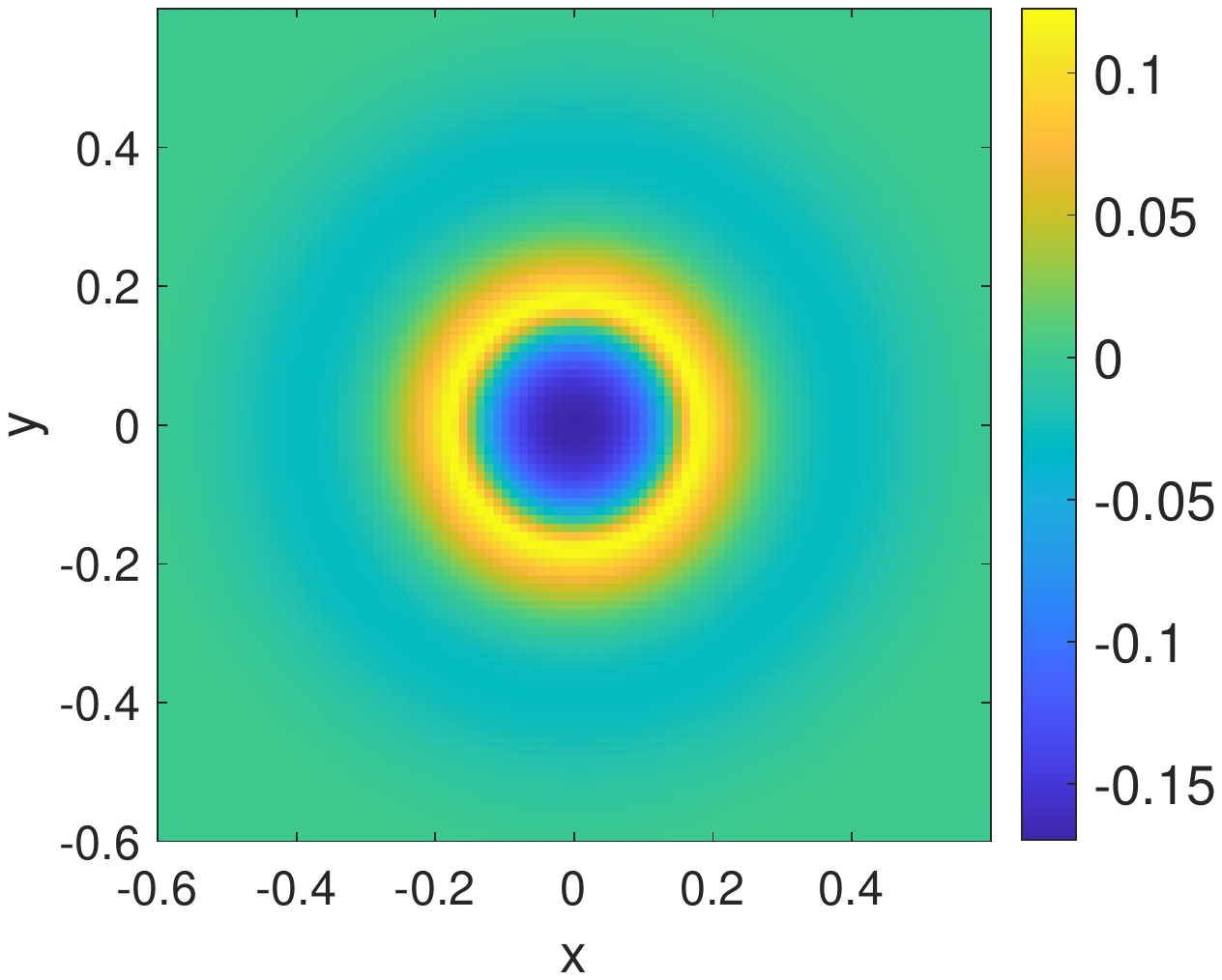}

  \caption{Recovering a single bump contrast function with $4$th-order finite difference solver for both data and inversion. The upper row shows the estimated contrast function and the lower row shows the reconstruction error at $k=2^6$ (left) and $k=2^4$ (right).
  }
  \label{fig:inverse_bump_FD}
\end{figure}

\begin{figure}[htbp]
  \centering
  \includegraphics[width=0.4\textwidth]{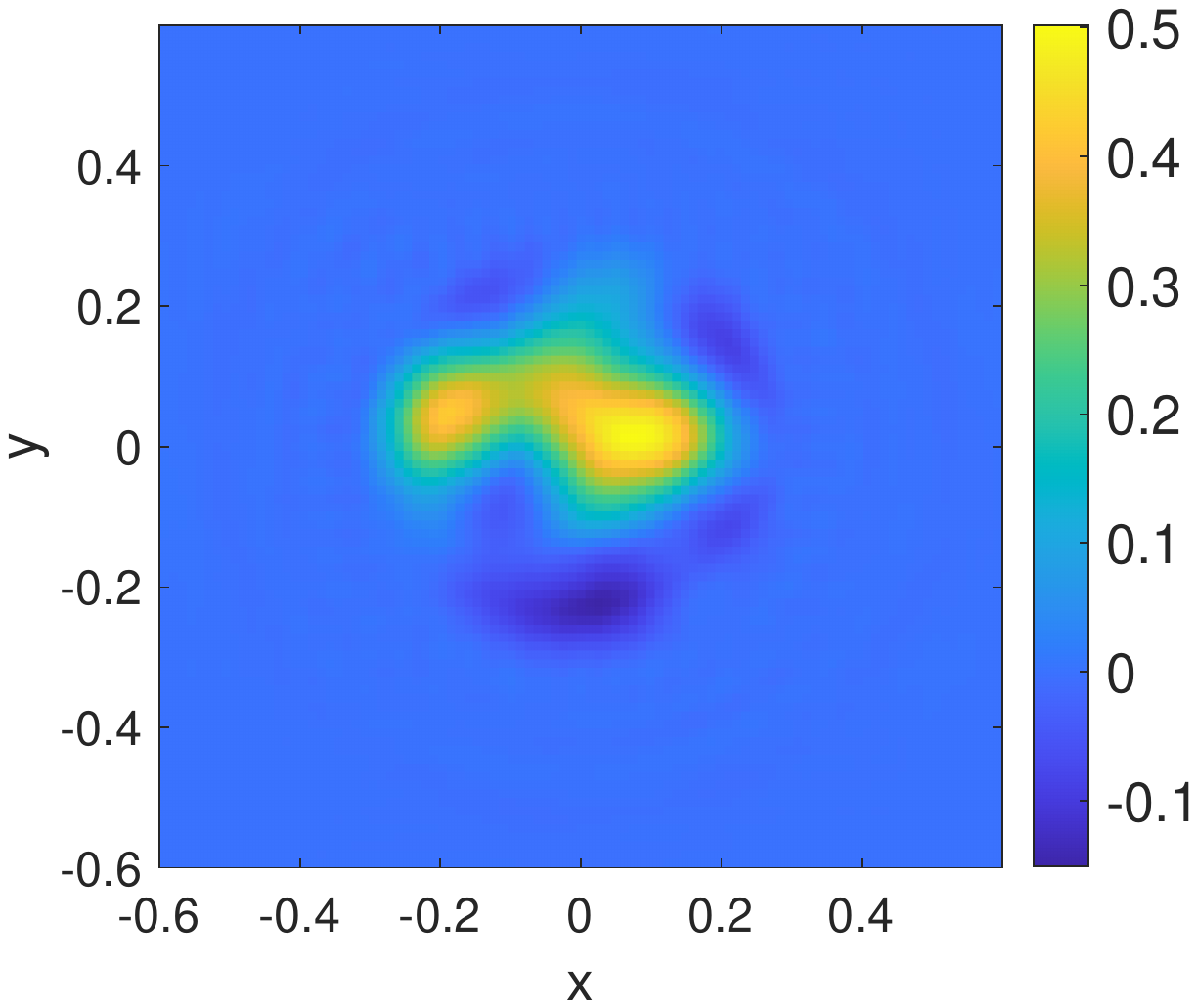}
  \includegraphics[width=0.4\textwidth]{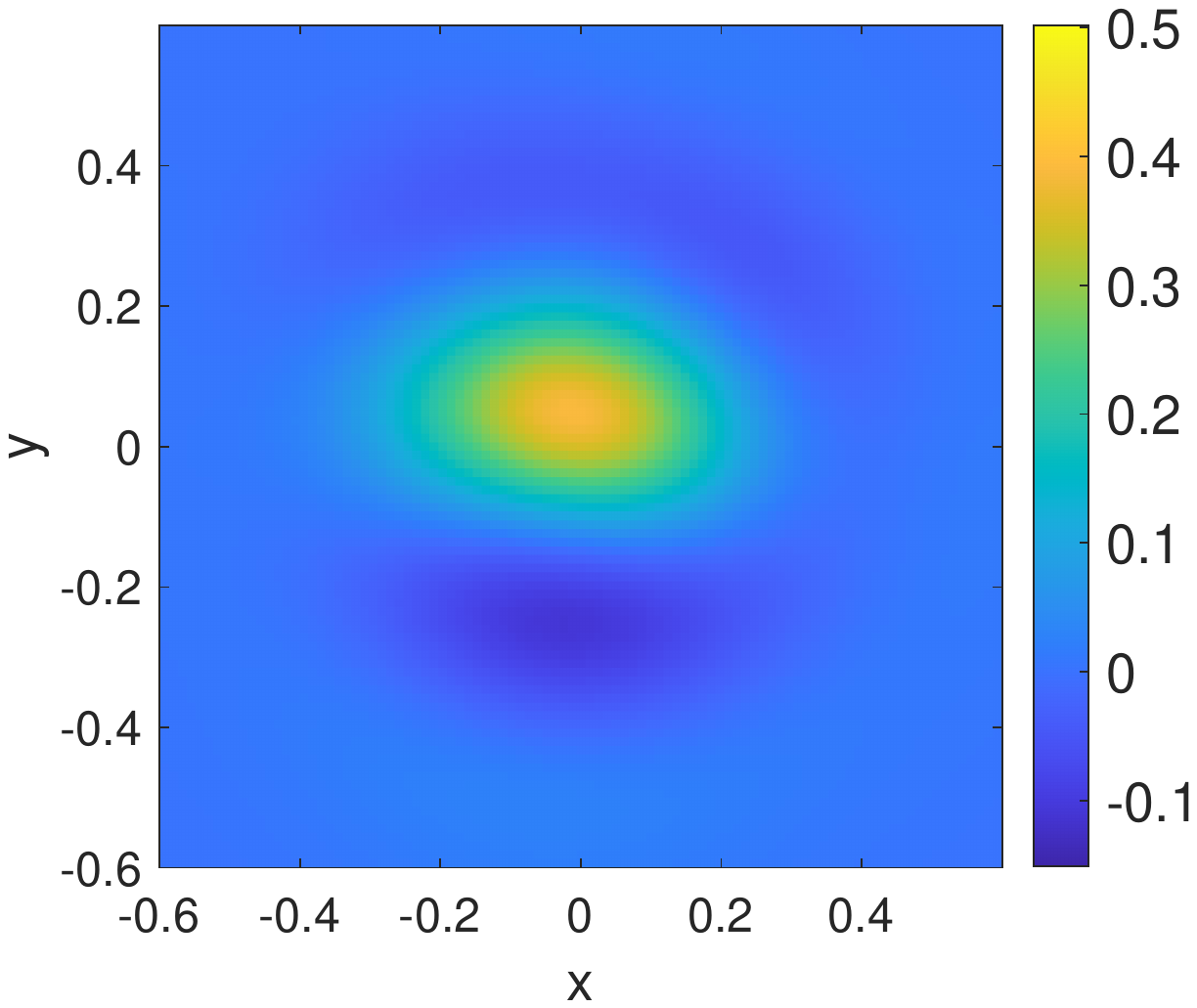}
  \\
  \includegraphics[width=0.4\textwidth]{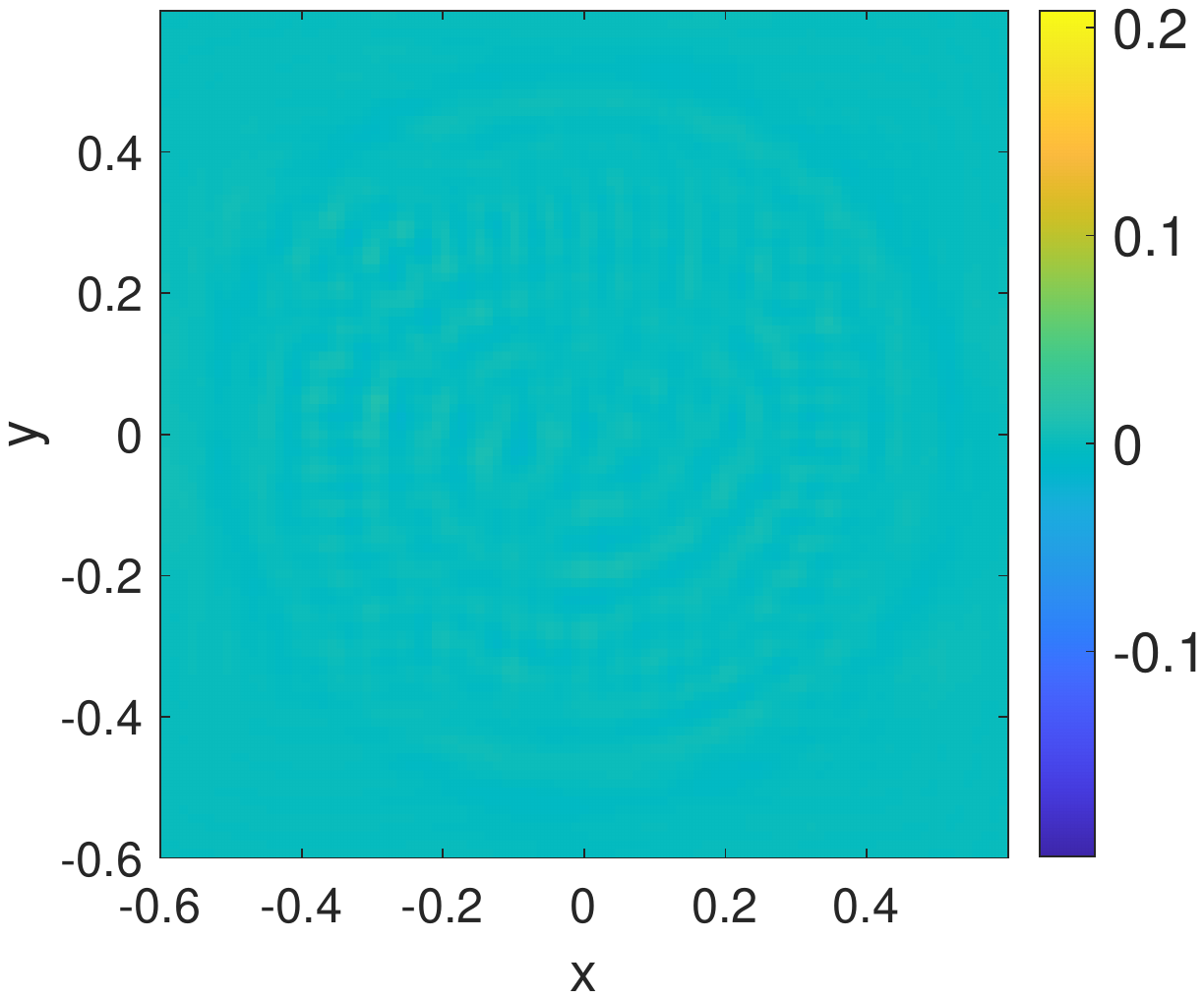}
  \includegraphics[width=0.4\textwidth]{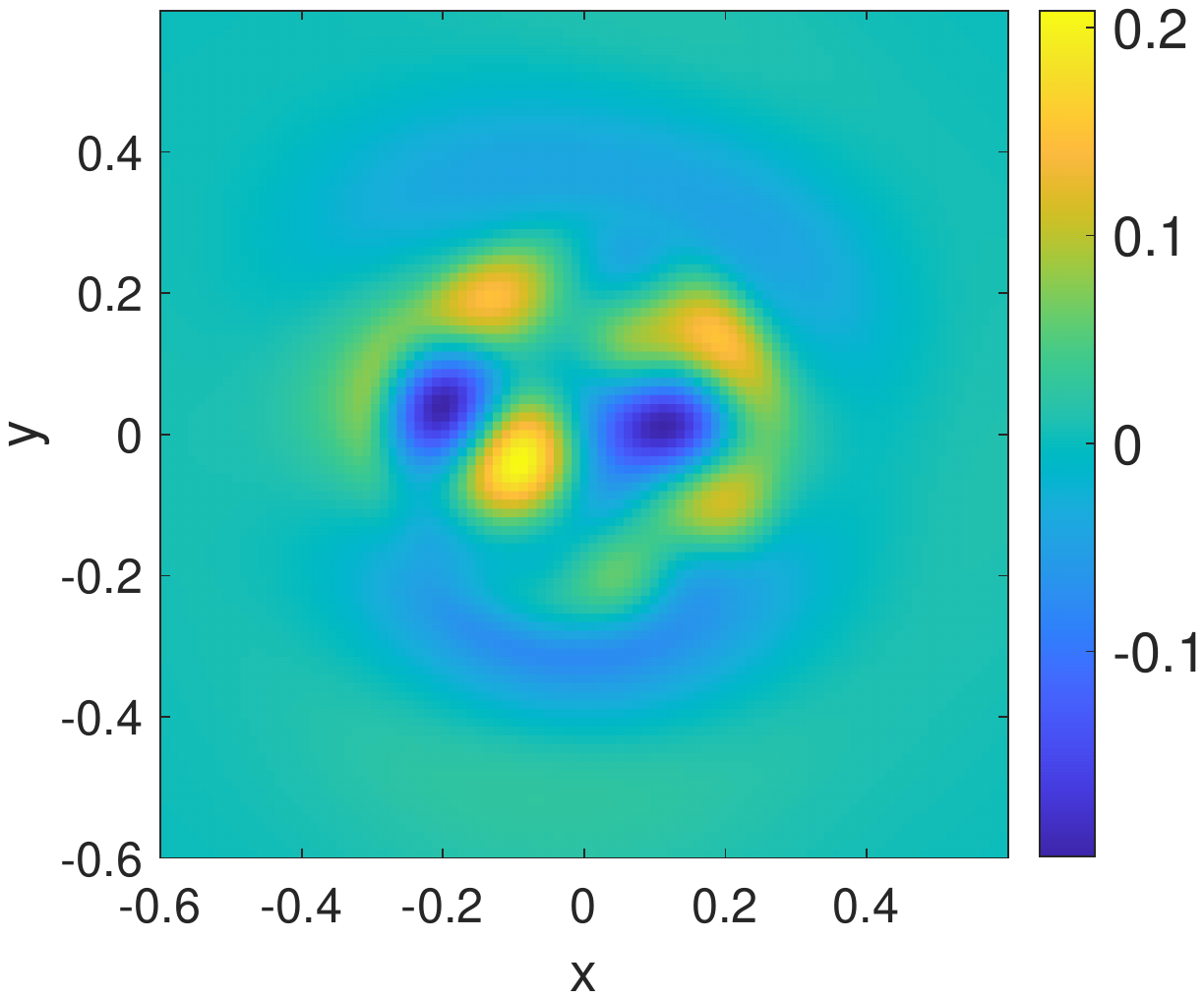}

  \caption{Recovering a delocalized contrast function with $4$th-order finite difference solver for both data and inversion. The upper row shows the estimated contrast function and the lower row shows the reconstruction error at $k=2^6$ (left) and $k=2^4$ (right).
  }
  \label{fig:inverse_delocal_FD}
\end{figure}

\begin{figure}[htbp]
  \centering
  \includegraphics[width=0.4\textwidth]{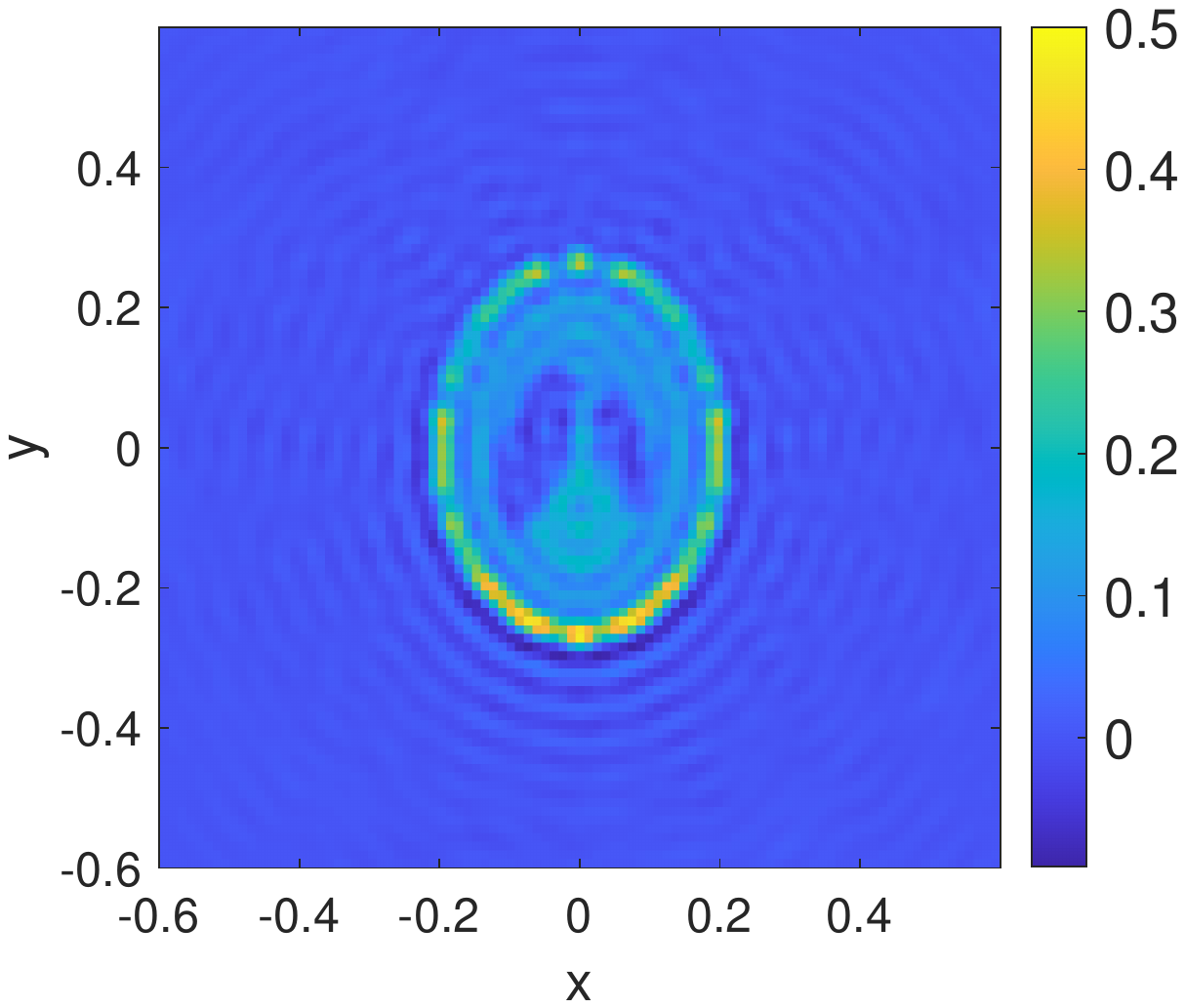}
  \includegraphics[width=0.4\textwidth]{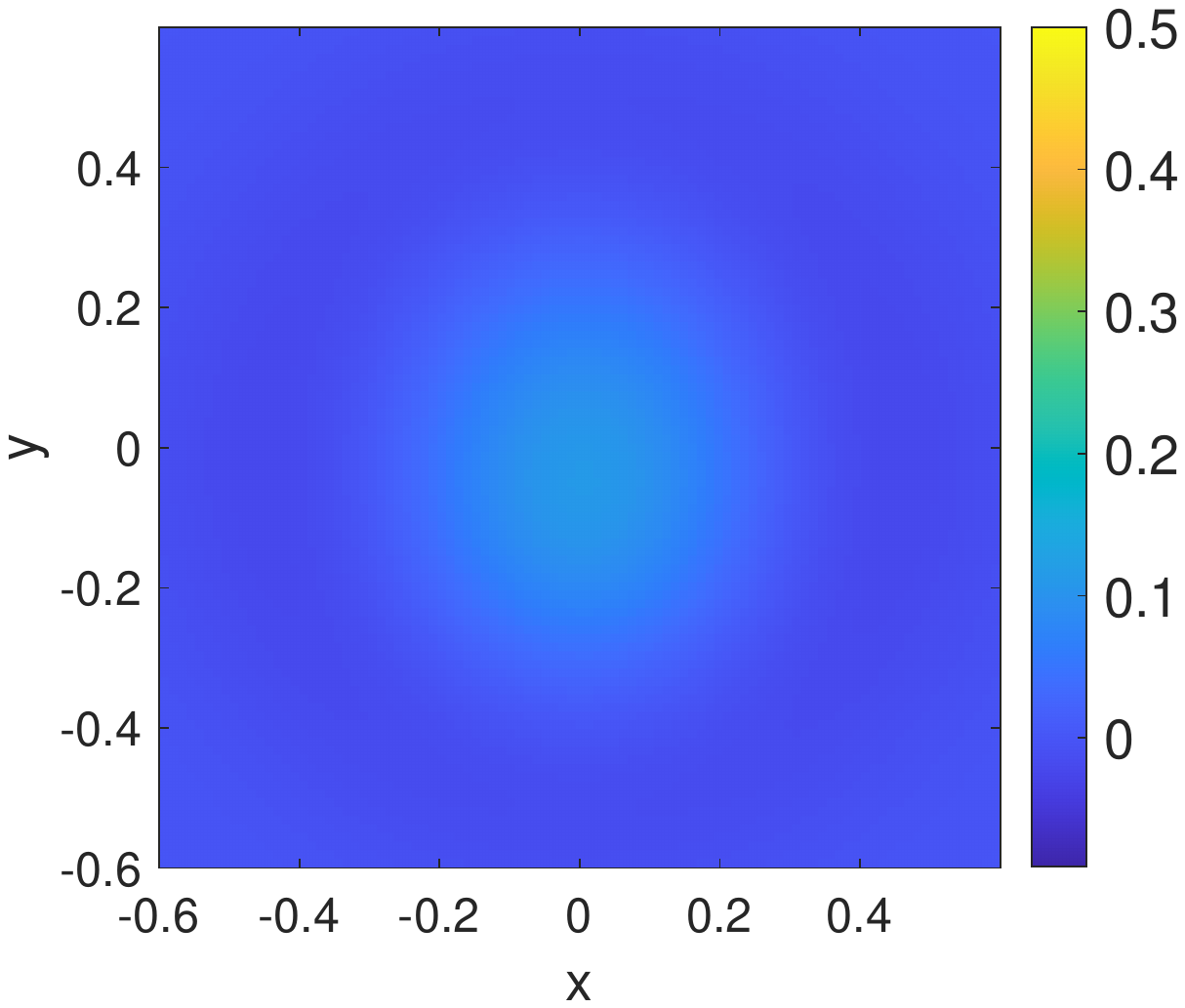}

  \caption{Recovering the Shepp-Logan phantom with $4$th-order finite difference solver for both data and inversion. The estimated contrast functions are shown for $k=2^6$ (left) and $k=2^4$ (right).
  }
  \label{fig:inverse_shepplogan_FD}
\end{figure}

Lastly, we compare the conventional inverse scattering problem and our new inverse problem using the Husimi data. We choose the incident wave $\uik = e^{\ri \omega \hat{\theta} \cdot x}$ with $\hat{\theta} \in \Sb^1$ in~\eqref{eqn:Helmholtz_incident}, and measure the scattered far field data $\usk$. Again we cast the problem as a nonlinear least square problem, and solve it using L-BFGS. We consider the initial perturbation equal to zero, and set a first order optimality tolerance of $10^{-5}$.

For simplicity, we use $4$th-order finite difference for both data generation and inversion. The setup of the computational domain and the discretization are the same as in the previous examples.

The far field measurement is taken on the boundary $\partial B(\widetilde{R})$ with $\widetilde{R}=1$. We compute the data with $180$ incident directions $\hat{\theta}$ that are equally distributed on $\Sb^1$ and $180$ receivers that are equally distributed on $\partial B(\widetilde{R})$. We add $5\%$ noise to the scattered data in the form of~\eqref{eqn:data_noise}.

\begin{figure}[htbp]
  \centering
  \includegraphics[width=0.4\textwidth]{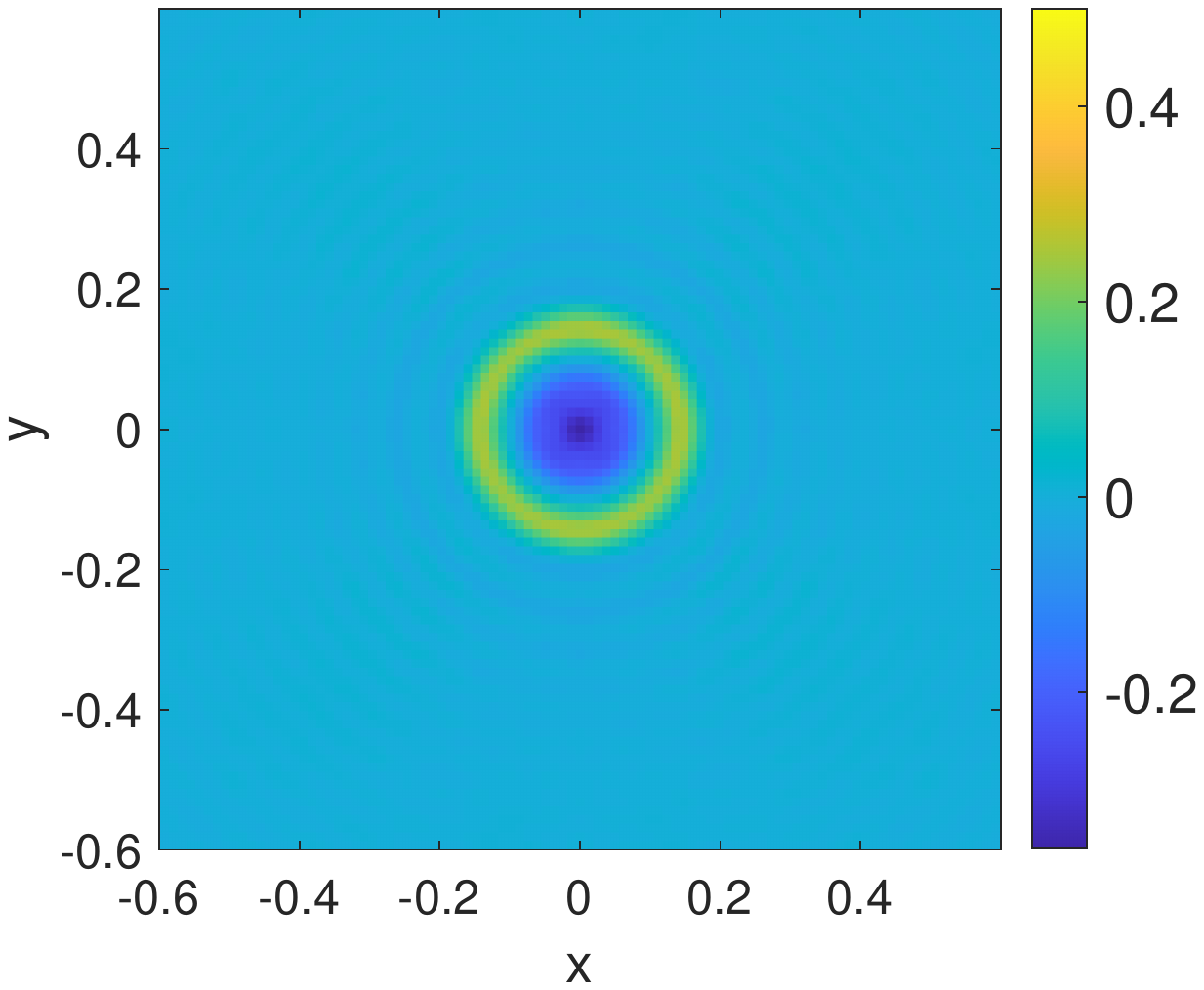}
  \includegraphics[width=0.4\textwidth]{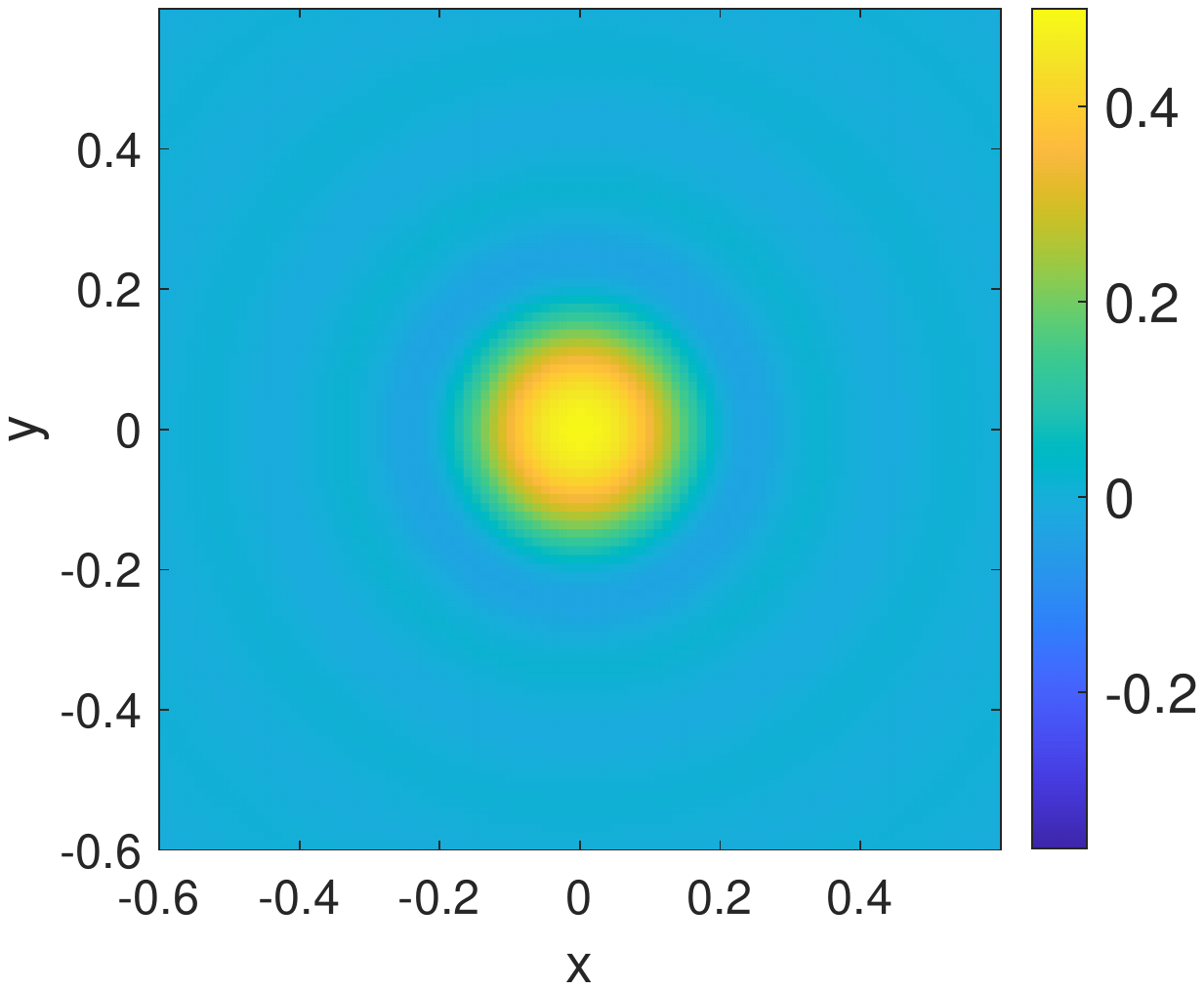}

  \caption{Recovering a single bump contrast function by plane waves. The estimated contrast function at $k=2^6$ (left) and $k=2^4$ (right) are shown. 
  $4$th-order finite difference solver is used for both data and inversion.
  }
  \label{fig:inverse_IS_bump_FD}
\end{figure}

Finally, we test the robustness of the new formulation with respect to the non-convexity of the loss function. The ill-posedness of the inverse scattering problem is often manifested as a very non-convex loss function with a myriad of local minima. As a consequence, any PDE constrained optimization-based reconstruction has a higher chance of converging to a non-physical minimum, a process that is often called cycle-skipping~\cite{ViAsBrMeRiZh:2017introduction}. For comparing the new formulation and the traditional one we also run the classical full-wave form inversion in frequency domain, using data at a single frequency, using the delocalized media in Figure~\ref{fig:inverse_truth}.
As discussed in Section~\ref{rmk:classical}, in the classical formulation one probes the medium with plane waves, and the measurement operator samples the wavefield directly on the boundary of the domain of interest. Numerically, we minimize the $\ell^2$ misfit of the wavefield at the boundary, using the same L-BFGS solver as before. Initial guess is zero. We repeat the experiments for two different wave numbers that are used in the new formulation as well. The results are shown in Figures~ \ref{fig:inverse_IS_bump_FD}, \ref{fig:inverse_IS_delocal_FD}, and \ref{fig:inverse_IS_shepplogan_FD}, respectively. In the plots we can observe that at low-frequencies we recover a smoothed version of the medium, but as the frequency increases we encounter cycle-skipping, i.e., the algorithm converges to a spurious medium. This is an stark contrast with the inversion results of the new formulation shown in Figures~\ref{fig:inverse_bump_FD},~\ref{fig:inverse_delocal_FD}, and \ref{fig:inverse_shepplogan_FD}, where at low-frequency the reconstruction does not perform as well, but it is more stable at high-frequencies, providing an accurate reconstruction.

In summary the numerical experiments seems to indicate that the new inverse formulation is far more robust to cycle skipping than its traditional counterpart.

\begin{figure}[htbp]
  \centering
  \includegraphics[width=0.4\textwidth]{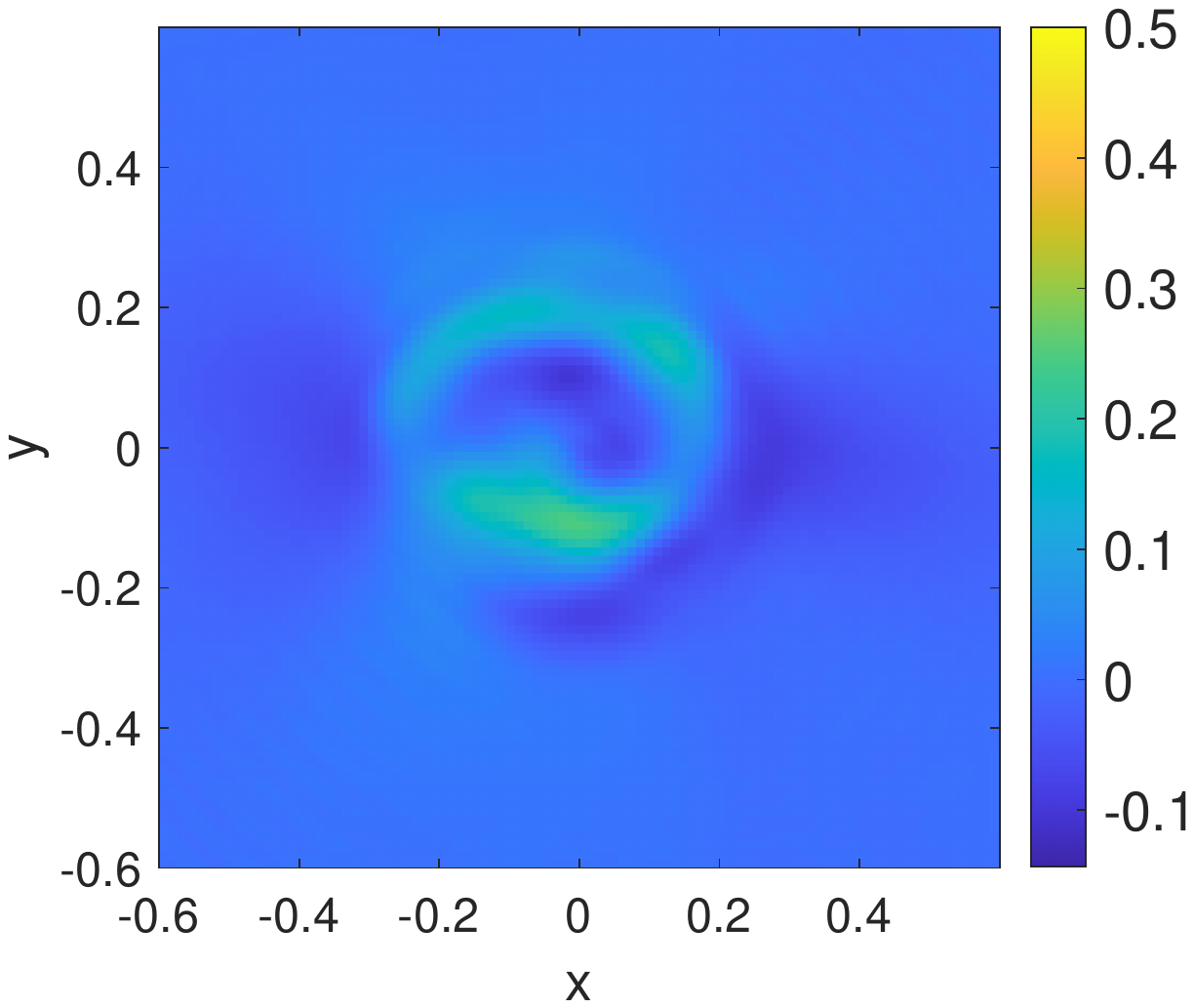}
  \includegraphics[width=0.4\textwidth]{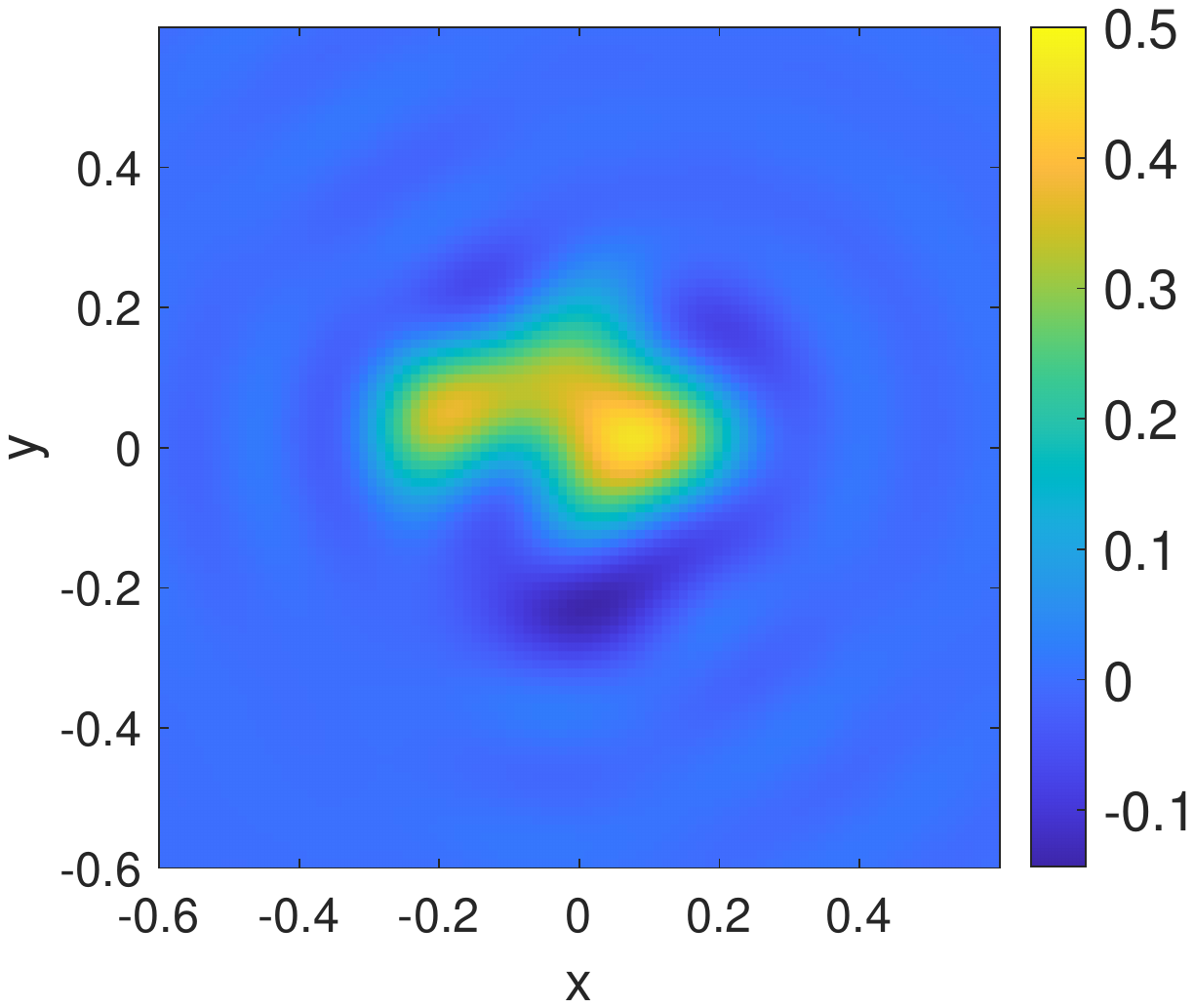}

  \caption{Recovering a delocalized contrast function by plane wave. The estimated contrast function at $k=2^6$ (left) and $k=2^4$ (right) are shown.
  $4$th-order finite difference solver is used for both data and inversion.
  }
  \label{fig:inverse_IS_delocal_FD}
\end{figure}

\begin{figure}[htbp]
  \centering
  \includegraphics[width=0.4\textwidth]{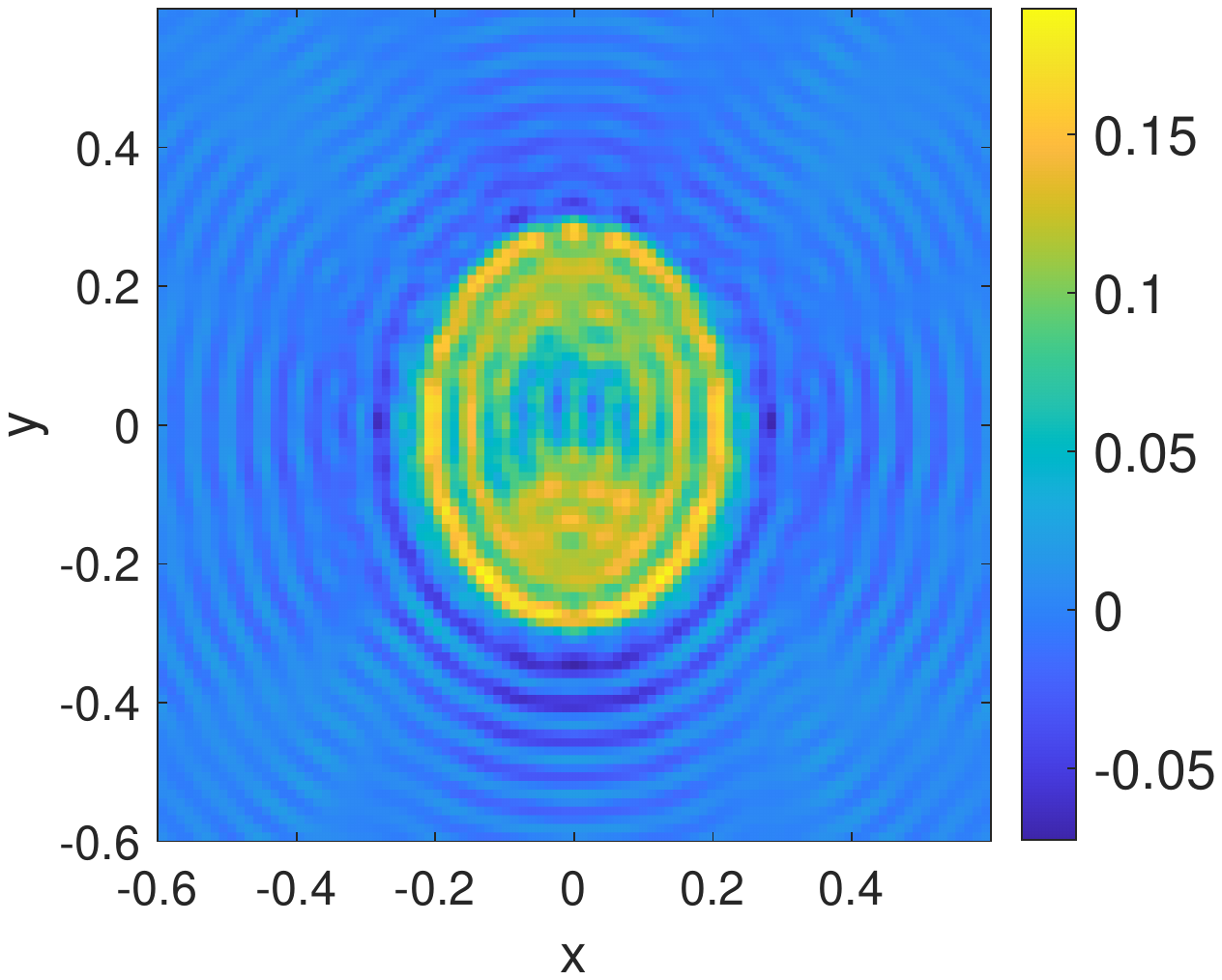}
  \includegraphics[width=0.4\textwidth]{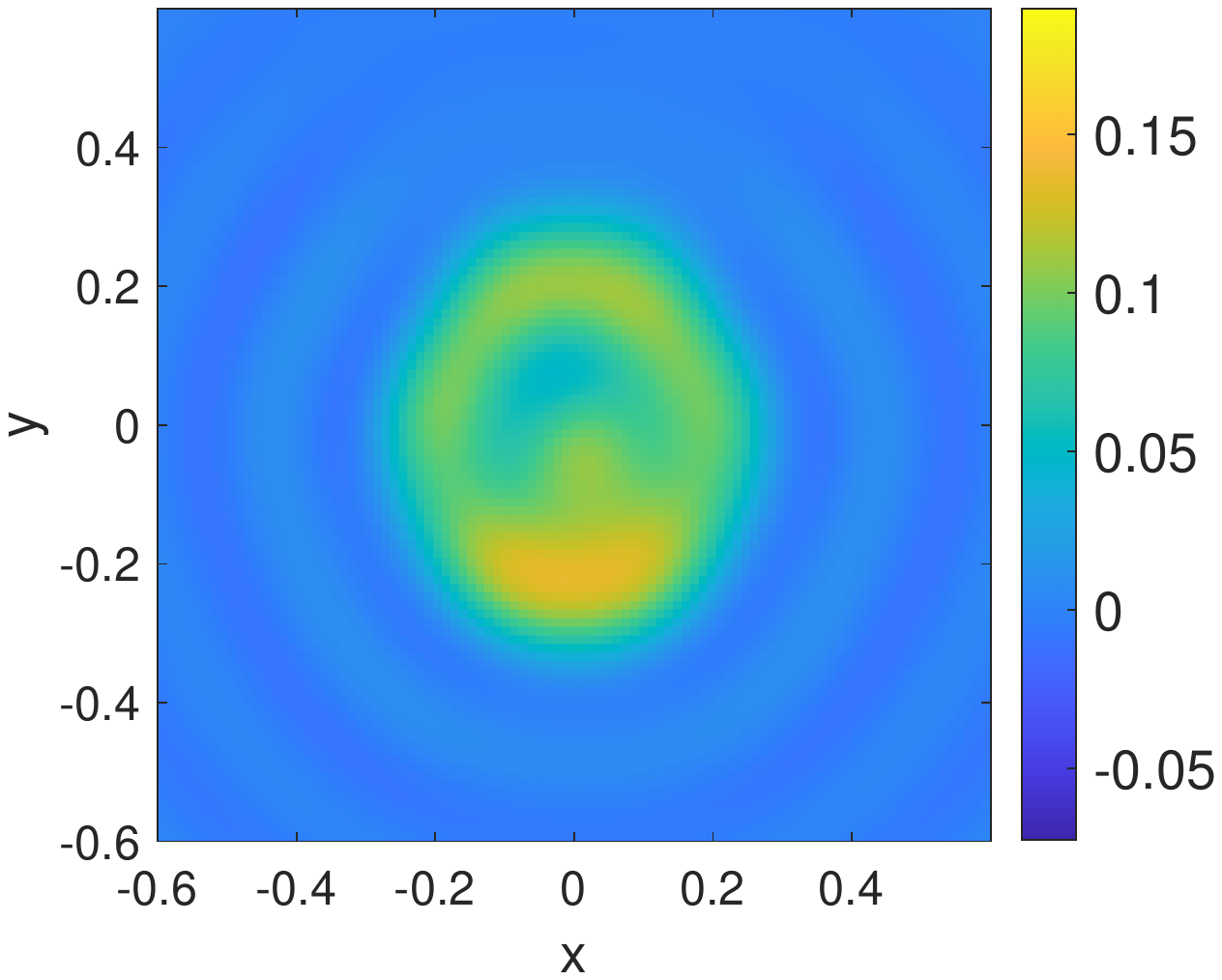}

  \caption{Recovering the Shepp-Logan phantom by plane wave. The estimated contrast function at $k=2^6$ (left) and $k=2^4$ (right) are shown.
  $4$th-order finite difference solver is used for both data and inversion.
  }
  \label{fig:inverse_IS_shepplogan_FD}
\end{figure}

\section{Conclusions}
To reconstruct an unknown medium, the generalized Helmholtz inverse scattering problem uses data pairs consisting of the impinging and scattered wave fields, while Liouville inverse scattering problems uses data pairs consisting of incoming and outgoing wave location and direction. The former is regarded ill-posed in the high-frequency regime, while the latter is well-posed. This is intuitively contradicting to the fact that Liouville equation is the asymptotic limit of the Helmholtz equation.

We investigate this issue in this paper. In particular, we develop a new formulation for studying the Helmholtz inverse scattering problem with a new data collection process, and we show that this new formulation, in the high-frequency limit, becomes the Liouville inverse scattering problem, and thus inherits the well-posedness nature. This discovery bares the conceptual merit of providing the mathematical description of the wave-particle duality for light propagation in the inverse setting. In addition, this discovery also suggests a more stable numerical reconstruction process for studying the Helmholtz inverse scattering problem, which we showcase using several numerical experiments.

\bibliographystyle{siamplain}
\bibliography{ref}

\appendix
\section{Formal derivation of Theorem \ref{thm:formal}}\label{appendix:formalderivation}

We start from the equation
\begin{equation}\label{eqn:app_schr}
\ri k \alpha^k \uk + \Delta \uk + k^2 n(x) \uk = -S^k(x) = -k^{\frac{d+3}{2}} S(k(x-x_s)) \,, \quad x\in\Rb^d \,,
\end{equation}
and assume that $\alpha^k\to\alpha\geq0$ in the limit $k\to\infty$. We denote the density matrix of $\uk$ satisfying~\eqref{eqn:app_schr} by
\begin{equation}
g^k(x,y) = \uk\left( x-\frac{y}{2k} \right) \overline{\uk} \left( x+\frac{y}{2k} \right) \,,
\end{equation}
and the Fourier transform of a generic $u$ by
\begin{equation}
\widehat{u}(v) = \Fc_{y\to v} u(y) = \frac{1}{(2\pi)^d} \int_{\Rb^d} e^{-\ri y v} u(y) \rmd y \,.
\end{equation}
The inverse Fourier transform is then
\begin{equation}
\Fc^{-1}_{v\to x} u(v) = \int_{\Rb^d} e^{\ri x v} u(v) \rmd v \,.
\end{equation}
Now we compute the equation satisfied by the Wigner transform. The first step is to compute the derivatives of $g^k$
\begin{equation}
\nabla_y\cdot\nabla_x g^k(x,y) = -\frac{1}{2k}\left[ \Delta \uk\left( x-\frac{y}{2k} \right) \overline{\uk} \left( x + \frac{y}{2k} \right) - \uk\left( x-\frac{y}{2k} \right) \Delta \overline{\uk} \left( x + \frac{y}{2k} \right)\right] \,,
\end{equation}
and thus we have
\begin{equation}
\begin{aligned}
\alpha^k g^k + \ri \nabla_y\cdot\nabla_x g^k(x,y)
&+ \frac{\ri k}{2} \left[ n\left( x+\frac{y}{2k} \right) - n\left( x-\frac{y}{2k} \right)\right] g^k(x,y) = \\
& = \sigma^k(x,y) \\
& := \frac{\ri}{2k} \left[ S^k\left( x-\frac{y}{2k} \right) \overline{\uk}\left( x+\frac{y}{2k} \right) - \overline{S^k}\left( x+\frac{y}{2k} \right) \uk\left( x-\frac{y}{2k} \right)\right] \,.
\end{aligned}
\end{equation}
Therefore, after a Fourier transform, we obtain the following transport equation on the Wigner transform $\fk$
\begin{equation}
\alpha^k \fk(x,v) + v\cdot \nabla_x \fk(x,v) + Z^k(x,v) \ast_v \fk(x,v) = Q^k(x,v) \,,
\end{equation}
where the last term denotes the convolution in $v$
\[
Z^k(x,v) \ast_v \fk(x,v) = \int_{\Rb^d} Z^k(x,v-p) \fk(x,p) \rmd p
\]
and the quantities $Z^k$, $Q^k$ arising in this equation are given by
\begin{equation}
\begin{aligned}&Z^k(x,v) = \frac{1}{(2\pi)^d}\frac{\ri k}{2} \Fc_{y\to v}^{-1} \left[ n\left( x+\frac{y}{2k} \right) - n\left( x-\frac{y}{2k} \right) \right] \,,\\
&Q^k(x,v) = \frac{1}{(2\pi)^d}\Fc_{y\to v}^{-1} \sigma^k(x,y) \,.
\end{aligned}
\end{equation}
From this equation we can compute the formally compute the limits. For $Z^k$ we have that
\begin{equation}
Z^k(x,v) \xrightarrow{k\to\infty} \frac{1}{(2\pi)^d} \frac{\ri}{2} (\Fc_{y\to v}^{-1} y) \cdot \nabla_x n(x) = -\frac{1}{2} \nabla_x n(x) \cdot \nabla_v \delta(v) \,.
\end{equation}
The limit of the source term $Q^k$ is slightly more involved. First, we define the complex valued function
\begin{equation}
w^k(y) = \frac{1}{k^{\frac{d-1}{2}}} \uk\left( x_s + \frac{y}{k} \right) \,,
\end{equation}
which after a change of variable can be rewritten as
\begin{equation}
\uk(x) = k^{\frac{d-1}{2}} w^k(k(x-x_s)) \,,
\end{equation}
where function $w^k$ satisfies the rescaled Helmholtz equation
\begin{equation}
\ri \frac{\alpha^k}{k} w^k + \Delta w^k + n\left( x_s+\frac{y}{k} \right) w^k = -S(y) \,.
\end{equation}
In the high-frequency limit, $w^k$ converges towards a solution $w$ of
\begin{equation}\label{eqn:app_rescaled_w}
\Delta w + n(x_s)w = -S(y) \,.
\end{equation}
The second step is to compute the Fourier transform of $w$. To do so, we add an absorption term to the equation above, resulting in
\begin{equation}
\ri \beta w + \Delta w + n(x_s)w = -S(y) \,.
\end{equation}
where $\beta>0$. This new term, is used as a broadening factor, which helps to smooth the Fourier transform. We perform a Fourier transform on both sides, which leads to
\begin{equation}
\widehat{w}(v) = \frac{-\hat{S}(v)}{n(x_s)-|v|^2 + \ri\beta}
= \hat{S}(v) \hat{G}(v;\beta)\,.
\end{equation}
where $\hat{G}(v;\beta)$ denotes the Fourier transform of the outgoing Green's function that vanishes at infinity
\begin{equation}
\hat{G}(v;\beta)
\equiv -\frac{1}{n(x_s)-|v|^2 + \ri\beta}
= -\frac{n(x_s)-|v|^2}{(n(x_s)-|v|^2)^2 + \beta^2} + \frac{\ri\beta}{(n(x_s)-|v|^2)^2 + \beta^2} \,, \quad \beta>0 \,.
\end{equation}
As usual, we take the limit $\beta\to0+$. The first term converges weakly to the principal value
\begin{equation}
-\frac{n(x_s)-|v|^2}{(n(x_s)-|v|^2)^2 + \beta^2} \xrightarrow{\beta\to0+} -\text{P.V.} \left( \frac{1}{n(x_s)-|v|^2} \right)\,.
\end{equation}
The second term converges to a delta function on the sphere $\{|v|^2 = n(x_s)\}$ as $\beta\to0+$
\begin{equation}
\frac{\ri\beta}{(n(x_s)-|v|^2)^2 + \beta^2} \xrightarrow{\beta\to0+}
\frac{\ri \pi}{2}\delta(|v|^2 = n(x_s))\,.
\end{equation}
In summary, we obtain the Fourier transform of the outgoing solution to~\eqref{eqn:app_rescaled_w}
\begin{equation}\label{eqn:app_fourier_w_limit}
\widehat{w}(v) = \lim_{\beta\to 0+}\hat{S}(v) \hat{G}(v;\beta)
= \hat{S}(v) \left[ \frac{\ri \pi}{2}\delta(|v|^2 = n(x_s)) - \text{P.V.} \left( \frac{1}{n(x_s)-|v|^2} \right) \right]\,.
\end{equation}
Now we are ready to compute $Q^k$. We take two test functions $\phi(x)$ and $\psi(y)$
\begin{equation}
\begin{aligned}
&\quad \int_{\Rb^{2d}} \sigma^k(x,y)\phi(x)\psi(y) \, \rmd x \, \rmd y \\
&=\frac{\ri}{2k} \int_{\Rb^{2d}} \left[ S^k\left( x-\frac{y}{2k} \right) \overline{\uk}\left( x+\frac{y}{2k} \right) - \overline{S^k}\left( x+\frac{y}{2k} \right) \uk\left( x-\frac{y}{2k} \right)\right] \phi(x)\psi(y)\rmd x \rmd y \\
&=\frac{\ri k^d}{2} \int_{\Rb^d} \bigg[S\left( k\left(x-\frac{y}{2k} -x_s\right)\right) \overline{w^k}\left( k\left(x+\frac{y}{2k}-x_s\right) \right)\\
& \qquad \qquad - \overline{S}\left( k\left( x+\frac{y}{2k}-x_s \right) \right) w^k\left( k\left( x-\frac{y}{2k} - x_s \right) \right) \bigg] \phi(x)\psi(y)\rmd x \rmd y \\
&=\frac{\ri}{2} \int_{\Rb^{2d}} \left[ S(z)\overline{w^k}(z+y)\phi\left(\frac{z}{k}+\frac{y}{2k}+x_s\right)
 - \overline{S}(z)w^k(z-y)\phi\left(\frac{z}{k}-\frac{y}{2k}+x_s\right) \right] \psi(y) \rmd z \rmd y \\
&\xrightarrow{k\to\infty} \frac{\ri}{2} \phi(x_s) \int_{\Rb^{2d}} \left[ S(z)\overline{w}(z+y)
 - \overline{S}(z)w(z-y)\right] \psi(y) \rmd z \rmd y \,.
\end{aligned}
\end{equation}
In other words, we have formally obtained that
\begin{equation}
\sigma^k(x,y) \xrightarrow{k\to\infty} \frac{\ri}{2} \delta(x-x_s)\int_{\Rb^d} \left[ S(z)\overline{w}(z+y)
 - \overline{S}(z)w(z-y) \right] \rmd z \,,
\end{equation}
which after a Fourier transform gives
\begin{equation}\label{eqn:app_Qk}
\begin{aligned}
&\qquad Q^k(x,v) = \frac{1}{(2\pi)^d}\Fc_{y\to v}^{-1} \sigma^k(x,y) \\
&\xrightarrow{k\to\infty} \frac{1}{(2\pi)^d}\frac{\ri}{2} \delta(x-x_s) \Fc_{y\to v}^{-1} \left\{\int_{\Rb^d} \left[ S(z)\overline{w}(z+y)
 - \overline{S}(z)w(z-y) \right] \rmd z \right\} \\
&= \frac{\ri}{2} \delta(x-x_s) (2\pi)^d \left[\hat{S}(v) \overline{\hat{w}(v)} - \overline{\hat{S}(v)} \hat{w}(v)\right] \\
&= (2\pi)^d\delta(x-x_s) \im\left[\overline{\hat{S}(v)} \hat{w}(v) \right] \,.
\end{aligned}
\end{equation}
We finally obtain
\begin{equation}
Q^k(x,v) \xrightarrow{k\to\infty} (2\pi)^d\frac{\pi}{2}\delta(x-x_s) |\hat{S}(v)|^2 \delta(|v|^2 = n(x_s)) \,.
\end{equation}
by substituting~\eqref{eqn:app_fourier_w_limit} in~\eqref{eqn:app_Qk}.
\end{document}